%
%
%
%
%
\RequirePackage{fix-cm}
\documentclass[smallextended]{svjour3}       
\smartqed  
\usepackage{graphicx}
\usepackage{newtxtext,newtxmath}         
\usepackage{booktabs}
\usepackage{array}
\newcolumntype{L}[1]{>{\raggedright\arraybackslash}p{#1}}
\usepackage{amsmath}
\usepackage{amsfonts}
\usepackage{algorithmic}
\usepackage[ruled,vlined]{algorithm2e}
\usepackage[colorlinks]{hyperref}
\usepackage{hyperref}
\hypersetup{colorlinks=true, pdfstartview=FitV, linkcolor=black, citecolor=black, plainpages=false,  urlcolor=black}
\usepackage{siunitx,multirow}
\usepackage{color,placeins}
\newcommand{\MAE}{Monge-Amp\'ere equation }
\newcommand{\bfx}{\textbf{x}}
\newcommand{\bfc}{\textbf{c}}
\newcommand{\bfw}{\textbf{w}}
\newcommand{\bfn}{\textbf{n}}
\newcommand{\bfv}{\textbf{v}}
 \journalname{3D MAE}
\begin{document}

\title{Trivariate Spline Collocation Methods for Numerical Solution to 3D Monge-Amp\`ere Equation}
\author{Ming-Jun Lai
		\and Jinsil Lee}
\institute{Ming-Jun Lai ,  Jinsil Lee  \at
              Department of Mathematics, University of Georgia, Athens, GA 30602 \\
              \\
              Ming-Jun Lai \at
              \email{mjlai@uga.edu}        
           \and
           Jinsil Lee \at
             \email{jl74942@uga.edu}       
}
\date{Received: date / Accepted: date}
\maketitle
\begin{abstract}
We use trivariate spline functions for the numerical solution of the Dirichlet problem of the 3D 
elliptic Monge-Amp\'ere equation.  Mainly we use the spline collocation method introduced in [SIAM J. Numerical Analysis, 2405-2434,2022] to numerically solve iterative Poisson equations and use an averaged algorithm to 
ensure the convergence of the iterations. We shall also establish the rate of convergence under 
a sufficient condition and provide some numerical evidence to show the numerical rates. Then we 
present many computational results to demonstrate that this approach works very well. In 
particular, we tested many known convex solutions as well as nonconvex solutions over convex and 
nonconvex domains and compared them with several existing numerical methods to show the 
efficiency and effectiveness of our approach.

\keywords{Monge-Amp\'ere equation equation \and  collocation method \and iterative constrained 
minimization \and spline functions}
\subclass{65N30 \and 65K10 \and 35J96}
\end{abstract}

\section{Introduction}
\label{intro}
We are interested in numerically solving the Monge-Amp\'ere equation with 
Dirichlet boundary condition: 
\begin{eqnarray}
\label{MA}
	\det(D^{2}u(\bfx))&=f(\bfx), ~~\text{in} ~ \Omega \subset \mathbb{R}^3\\
	u(\bfx)&=g(\bfx), ~~\text{on} ~\partial \Omega,
\end{eqnarray}
where $\bfx=(x,y,z)$ has $3$ independent variables in a bounded domain 
$\Omega \subset \mathbb{R}^3$ and $D^2u$ is the Hessian of the function $u$, more precisely, 
\begin{equation}
	\label{3D}
	\det(D^2 u)=u_{xx} u_{yy} u_{zz} + 2u_{xy} u_{yz} u_{xz}-u_{xx}(u_{yz})^2- u_{yy} (u_{xz})^2-u_{zz} (u_{xy})^2.
\end{equation} 
This is a first step toward to solve the fully nonlinear Monge-Amp\'ere equation 
\begin{align}
	\label{MAE}
	\det(D^2u(\bfx))&=f(\bfx)/g(\nabla u(\bfx)), ~~\bfx~\text{in} ~ \Omega \subset \mathbb{R}^3\\
	\nabla u(\bfx)|_{\partial \Omega} &=\partial W, 
\end{align} 
where the boundary condition is called  the oblique boundary condition. Such a partial 
differential equation arises from the optimal transportation problem 
(cf. e.g. \cite{E98} and  \cite{V03}).  
More specifically, given a density function $f(\bfx)$ on the domain $\Omega$ and another density function $g(\bfw)$ on a separate domain $W$, the goal is to find the optimal plan $T$ which transports $f$ over $\Omega$ to $g$ over $W$ 
under the cost functional $c(\bfx, \bfw)= \dfrac{1}{2}\|\bfx- \bfw\|^2$, 
with $\int_\Omega f(\bfx)d\bfx=\int_W g(\bfw)d\bfw$.  It is Y. Brenier who discovered a 
characterization of the optimal transportation problem.
\begin{theorem}(Brenier, 1988\cite{BB00}) 
	Suppose that the transport cost is the quadratic Euclidean distance, $c(x, y) = \frac{1}{2}
	\|x - y\|^2$ and suppose that $W$ is a convex domain. Then
	there exists a convex function $u : \Omega \mapsto \mathbb{R}$ satisfying the 
	Monge-Ampere equation (\ref{MAE}), unique up to a constant, such that the gradient map $m=\nabla u$ is the unique optimal transport map satisfying the oblique boundary condition 
	$\nabla u|_{\partial \Omega} = \partial W$.  
\end{theorem}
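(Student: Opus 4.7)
The plan is to follow Brenier's original argument, which proceeds by relaxing Monge's transport problem to Kantorovich's measure-valued formulation and then exploiting duality under the quadratic cost. First I would introduce the set $\Pi(f,g)$ of probability measures on $\Omega\times W$ whose marginals are $f(\bfx)\,d\bfx$ and $g(\bfw)\,d\bfw$, and consider the relaxed problem of minimizing $\int\frac{1}{2}\|\bfx-\bfw\|^2\,d\pi$ over $\pi\in\Pi(f,g)$. Existence of a minimizer $\pi^\ast$ follows from weak-$\ast$ compactness of $\Pi(f,g)$ together with lower semicontinuity of the cost functional. Kantorovich duality then produces a pair of potentials $(\phi,\psi)$ satisfying $\phi(\bfx)+\psi(\bfw)\le\frac{1}{2}\|\bfx-\bfw\|^2$ pointwise with equality $\pi^\ast$-almost everywhere.

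The key observation for quadratic cost is the change of variables $u(\bfx)=\frac{1}{2}\|\bfx\|^2-\phi(\bfx)$ and $v(\bfw)=\frac{1}{2}\|\bfw\|^2-\psi(\bfw)$, which transforms the duality inequality into the Fenchel--Young inequality $u(\bfx)+v(\bfw)\ge \bfx\cdot\bfw$. Replacing $u$ by its double Legendre transform $u^{\ast\ast}$ (and $v$ by $u^\ast$) does not increase the dual value, so the optimal $u$ can be taken convex and lower semicontinuous, and $v=u^\ast$. Equality holds $\pi^\ast$-a.e., which by Rockafellar's characterization means $\bfw\in\partial u(\bfx)$ on the support of $\pi^\ast$. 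Since $u$ is convex, it is differentiable Lebesgue-a.e., and absolute continuity of $f\,d\bfx$ forces $\pi^\ast$ to be concentrated on the graph of the single-valued map $T=\nabla u$, so $T_\#(f\,d\bfx)=g\,d\bfw$ and $T$ is the unique optimal transport map.

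To recover the Monge-Amp\`ere equation I would then apply the pointwise change of variables formula: for $\bfx$ in the set of twice-differentiability of $u$ (which has full measure by Alexandrov's theorem), pushing $f$ forward by $\nabla u$ yields $f(\bfx)=g(\nabla u(\bfx))\det(D^2 u(\bfx))$, which is exactly (\ref{MAE}). The oblique boundary condition $\nabla u(\partial\Omega)=\partial W$ then follows from the requirement that $\nabla u$ map $\Omega$ onto $W$, together with convexity of $W$: convexity of the target is what guarantees that the subgradient image $\partial u(\Omega)$ can fill $W$ without the potential losing convexity, which is the crucial hypothesis in the statement.

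Uniqueness up to an additive constant is handled last: if $u_1,u_2$ are two such convex solutions, both $\nabla u_1$ and $\nabla u_2$ solve the same optimal transport problem, and uniqueness of the optimal map (proved via strict convexity arguments applied to $\frac{1}{2}(u_1+u_2)$) forces $\nabla u_1=\nabla u_2$ almost everywhere, hence $u_1-u_2$ is constant on each connected component of $\Omega$. The main obstacle I expect is the rigorous treatment of the boundary condition $\nabla u(\partial\Omega)=\partial W$: while convexity of $W$ is evidently the right hypothesis, showing that the Brenier potential indeed attains this boundary behavior requires either a delicate surjectivity argument for $\partial u$ on convex targets or an appeal to Caffarelli's boundary regularity theory, and this is the step where the quadratic-cost, convex-target structure is genuinely exploited rather than merely convenient.
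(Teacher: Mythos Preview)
The paper does not give a proof of this theorem at all; it is quoted as a classical result attributed to Brenier with a citation to \cite{BB00}, and serves only as motivation in the introduction. There is therefore no ``paper's own proof'' to compare your proposal against.

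That said, your outline is a faithful sketch of Brenier's original argument via Kantorovich relaxation and duality, and the identification of the optimal map as the gradient of a convex potential is correctly stated. Your caveat about the oblique boundary condition $\nabla u(\partial\Omega)=\partial W$ is well placed: this is the step that goes beyond Brenier's 1988 result and in fact requires the later interior and boundary regularity theory (Caffarelli, Urbas), so treating it as a genuine obstacle rather than an automatic consequence is the right instinct. For the purposes of this paper, however, you need not supply any argument --- the statement is background, not a contribution of the authors.
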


Although it is hard to determine the oblique boundary condition mentioned above, 
once we specify a map from the boundary of
$\Omega$ to the boundary of $W$, the problem (\ref{MAE}) becomes  
a Neumann boundary problem of the \MAE.  In particular, 
if $u$ is $C^2$ function whose gradient $\nabla u$ transforms $\Omega$ onto $W$,   
we can move the density $f(\bfx)$ at $\bfx \in \Omega$ to the location $\nabla u(\bfx)\in W$ to 
become the density $g(\nabla u(\bfx))$.
Such a problem is called the free movement problem which will be addressed 
at the end of this paper. 

Instead of considering the Neumann or oblique boundary value problem, this paper will focus on the Monge-Amp\'ere equation with a Dirichlet boundary condition.
 Note that this PDE has been studied for many years.  
In addition to the mathematical community, the \MAE has also been broadly studied in many 
applied fields such as elasticity, geometric optics, and image processing. See \cite{B06} 
and \cite{MY01}. 
Today such free-form optics are important in illumination applications. For example, they are 
used in the automotive industry for the construction of headlights that use the full light 
emitted by the lamp to illuminate the road but at the same time do not glare oncoming traffic 
\cite{ZNC11}. There are multiple ways to solve this inverse reflector problem; brute-force 
approaches, methods of supporting ellipsoids, simultaneous multiple surfaces approach, and 
Monge-Amp\'ere approaches.  Also, the \MAE finds applications in finance, seismic wave 
propagation, geostrophic flows, in differential geometry as explained in \cite{CGG18}.  
In this paper, we shall explain a spline based collocation method to solve the nonlinear 
PDE (\ref{MA}).

Let us begin recalling some existence, uniqueness, and regularity property of 
the \MAE (\ref{MA}). When $f, g$ are sufficiently smooth, the solution of (\ref{MA}) is very smooth explained in the following 

\begin{theorem}(Theorem 1 in \cite{CNS84})
Suppose that a bounded domain $\Omega\in \mathbb{R}^n$ is strictly convex, where $n\ge 2$. 
For any strictly positive right-hand side $f\in C^\infty(\overline{\Omega})$ 
with the boundary condition $g$ which has an extension $g\in C^\infty(\overline{\Omega})$, 
there exists a unique strictly convex solution $u$ is in $C^\infty(\Omega)$ satisfying (\ref{MA}).
\end{theorem}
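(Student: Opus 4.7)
The plan is to establish this via the classical method of continuity combined with a priori estimates, following the Caffarelli--Nirenberg--Spruck approach. First I would embed the target problem in a one-parameter family: pick a smooth strictly convex initial solution $u_0$ (for instance, a large multiple of $|\bfx|^2 - R^2$ adjusted by a smooth extension of the boundary data) with right-hand side $f_0=\det(D^2 u_0)$, and define $f_t = (1-t)f_0 + tf$, $g_t = (1-t)u_0|_{\partial\Omega} + tg$ for $t \in [0,1]$, so that $t=0$ is trivial and $t=1$ is the desired problem. Let $S \subset [0,1]$ be the set of $t$ for which there exists a strictly convex solution $u_t \in C^{2,\alpha}(\overline{\Omega})$. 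I would show $S$ is open via the implicit function theorem: the linearization of $\det(D^2 u)$ at a strictly convex $u$ is the linear elliptic operator $L_u v = \mathrm{tr}\bigl(\mathrm{cof}(D^2u)\, D^2v\bigr)$, which is strictly elliptic with smooth coefficients and invertible on the appropriate Banach spaces by Schauder theory.

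The real work is to show $S$ is closed, which reduces to a priori estimates in $C^{2,\alpha}(\overline{\Omega})$ uniform in $t$. I would carry these out in the standard order: (i) $C^0$ bounds via the convex envelope and maximum principle with barriers built from the strict convexity of $\Omega$; (ii) $C^1$ bounds, noting that by convexity the Lipschitz norm of $u_t$ is controlled by its boundary gradient, which is controlled because strict convexity of $\Omega$ allows one to construct upper and lower barriers tangent to the boundary; (iii) $C^2$ estimates, split into interior and boundary parts. Interior second-derivative bounds follow from Pogorelov's estimate, differentiating the equation twice and using the concavity of $\log\det$ on positive definite matrices. The boundary $C^2$ estimate is the main obstacle and the principal contribution of \cite{CNS84}: one must carefully bound pure tangential, mixed tangential-normal, and pure normal second derivatives on $\partial\Omega$, the latter requiring subtle barrier constructions that use the strict convexity of $\Omega$ in an essential way.

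Once $\|u_t\|_{C^2(\overline{\Omega})}$ is bounded and the Hessian is uniformly positive definite (hence the equation is uniformly elliptic along the family), I would apply the Evans--Krylov theorem — since $F(M) = \log\det(M)$ is concave on positive definite $M$ — to upgrade to a uniform $C^{2,\alpha}(\overline{\Omega})$ bound. A standard Schauder bootstrap then yields $C^\infty$ regularity in $\Omega$: differentiating the equation, each derivative $\partial^\beta u$ solves a linear elliptic equation with smooth coefficients and smooth right-hand side. Closedness of $S$ follows, so $S=[0,1]$ and existence at $t=1$ is proved.

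For uniqueness, I would argue directly: if $u_1,u_2$ are two strictly convex $C^2$ solutions with the same data, then $w = u_1 - u_2$ satisfies a homogeneous linear elliptic equation obtained by integrating the linearization along the segment $tu_1+(1-t)u_2$ (which remains strictly convex, keeping ellipticity); since $w=0$ on $\partial\Omega$, the maximum principle forces $w\equiv 0$. Strict convexity of the produced solution is preserved throughout the continuity argument because the uniform $C^2$ bound combined with positivity of $f$ and $\det(D^2u_t)=f_t>0$ keeps the Hessian pinched between two positive definite matrices.
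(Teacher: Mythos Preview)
The paper does not prove this theorem at all: it is quoted in the Introduction as a background result, attributed to Caffarelli--Nirenberg--Spruck \cite{CNS84}, and no argument is given or sketched. So there is no ``paper's own proof'' to compare your proposal against.

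That said, your outline is a faithful summary of the strategy in \cite{CNS84}: method of continuity, openness via the implicit function theorem applied to the linearized operator $L_u v = \mathrm{tr}(\mathrm{cof}(D^2u)\,D^2v)$, closedness via the chain of a priori estimates ($C^0$, $C^1$, interior and boundary $C^2$, then Evans--Krylov for $C^{2,\alpha}$), Schauder bootstrap to $C^\infty$, and uniqueness by the maximum principle for the difference. One minor historical point: the original \cite{CNS84} predates the Evans--Krylov theorem in its now-standard form and instead derives the $C^{2,\alpha}$ estimate by differentiating the equation and exploiting the concavity of $\log\det$ directly, but the content is the same. Your identification of the boundary second-derivative estimate as the crux is exactly right.
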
 

There are several weaker versions of the existence results with regularity properties in the literature. For example,  
\begin{theorem}(Figalli, 2017 \cite{F17})
Let $\Omega$ be a uniformly convex domain, $k\geq 2, \alpha \in (0,1),$ and assume that 
$\partial \Omega$ is of class $C^{k+2,\alpha}$. 
Let $f\in C^{k,\alpha}(\bar{\Omega})$ with 	$f\geq c_0>0.$ 
Then for any $g\in C^{k+2,\alpha}(\partial \Omega),$ there exists a unique 
	solution $u\in C^{k+2,\alpha}(\bar{\Omega})$ to the Dirichlet problem \eqref{MA}.
\end{theorem}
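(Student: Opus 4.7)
The plan is to combine the method of continuity with a priori Schauder-type estimates tailored to the Monge-Amp\`ere operator, together with Caffarelli's regularity theory for weak solutions. First I would fix a smooth strictly convex reference function $u_0$ on $\bar{\Omega}$ matching the boundary data $g$ (its existence uses the uniform convexity of $\Omega$ together with the $C^{k+2,\alpha}$ regularity of $\partial\Omega$), and consider the one-parameter family of right-hand sides $f_t=(1-t)\det(D^2u_0)+tf$ for $t\in[0,1]$. Let $I$ denote the set of $t\in[0,1]$ for which a strictly convex solution $u_t\in C^{k+2,\alpha}(\bar{\Omega})$ with boundary data $g$ exists. The goal is to show that $I$ is nonempty ($0\in I$), open, and closed, hence $I=[0,1]$ and $1\in I$ gives the desired solution.

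Openness follows from the implicit function theorem applied to the nonlinear map $u\mapsto \det(D^2u)$, whose linearization at a strictly convex $u_t$ is the second order linear operator $L_tv=\mathrm{tr}\bigl(\mathrm{cof}(D^2u_t)\, D^2v\bigr)$. This operator is uniformly elliptic precisely because $u_t$ is strictly convex and $C^{2,\alpha}$, and its coefficients lie in $C^{k,\alpha}$; standard Schauder theory then yields the invertibility of $L_t: C^{k+2,\alpha}_0(\bar{\Omega})\to C^{k,\alpha}(\bar{\Omega})$ required by the implicit function theorem. Closedness is the substantive step: along a sequence $t_j\to t_*$ in $I$, I would use Perron's method and barriers built from the uniform convexity of $\Omega$ to obtain an Alexandrov solution $u_{t_*}$, then invoke Caffarelli's interior $C^{2,\alpha}$ theory (which requires $f_t\ge c_0>0$ and $f_t\in C^\alpha$) together with the boundary $C^{2,\alpha}$ regularity of Caffarelli-Nirenberg-Spruck and Trudinger-Wang to upgrade to global $C^{2,\alpha}$. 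Once $u_{t_*}\in C^{2,\alpha}(\bar{\Omega})$, the equation becomes uniformly elliptic and one differentiates in tangential directions and bootstraps via Schauder estimates to $C^{k+2,\alpha}(\bar{\Omega})$, which is enough to pass to the limit.

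The hard part will be the global $C^{2,\alpha}$ estimate up to $\partial\Omega$. This requires two ingredients that both lean on uniform convexity of $\partial\Omega$ in an essential way: a Pogorelov-type second derivative bound, and the strict convexity of $u$ at the boundary needed to apply Caffarelli's $W^{2,p}$ and $C^{2,\alpha}$ estimates for Alexandrov solutions whose Monge-Amp\`ere measure is bounded between two positive constants. The modulus of continuity of $f$ enters here to convert the interior $W^{2,p}$ information into $C^{2,\alpha}$ regularity, and the assumption $\partial\Omega\in C^{k+2,\alpha}$ is what allows one to flatten the boundary and reproduce the interior estimates near $\partial\Omega$.

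Finally, uniqueness follows from the comparison principle for convex Alexandrov solutions. If $u_1,u_2\in C^{k+2,\alpha}(\bar{\Omega})$ are two convex solutions with the same boundary data $g$ and the same right-hand side $f>0$, then $u_1-u_2$ satisfies a linear uniformly elliptic equation with zero boundary data (obtained by integrating the linearization along the segment between $D^2u_1$ and $D^2u_2$), and the maximum principle forces $u_1\equiv u_2$. This completes the outline of the proof.
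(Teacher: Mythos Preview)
The paper does not prove this theorem; it is quoted as a known result from Figalli's monograph \cite{F17} (and ultimately goes back to Caffarelli--Nirenberg--Spruck \cite{CNS84}). So there is no ``paper's own proof'' to compare against. Your outline---method of continuity with the linearization $L_tv=\mathrm{tr}(\mathrm{cof}(D^2u_t)\,D^2v)$, openness via the implicit function theorem and Schauder theory, closedness via global $C^{2,\alpha}$ a priori estimates at the boundary (Pogorelov-type bounds and the boundary regularity of \cite{CNS84}) followed by bootstrapping to $C^{k+2,\alpha}$, and uniqueness via the comparison/maximum principle---is precisely the standard route by which this theorem is established in the references the paper cites.
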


In \cite{A13P}, Awanou introduced another weaker version of the existence theorem:
\begin{theorem}(Awanou, 2013\cite{A13P})
Let $\Omega$ be a uniformly convex domain in $\mathbb{R}^n$ with boundary in $C^3$. 
Suppose $g\in C^3(\bar{\Omega}), \inf f >0$, and $f\in C^\alpha(\bar{\Omega})$ 
for some $\alpha \in (0,1)$. 
Then \eqref{MA} has a convex solution $u$ which satisfies the a priori estimate
\begin{equation*}
\|u\|_{C^{2,\alpha}(\bar{\Omega})}\le C,
\end{equation*}
where $C$ depends only on $n\ge 2, \alpha, \inf f, \Omega, \|f\|_{C^{\alpha}(\bar{\Omega})}$ 
and $\|g\|_{C^{3}}.$
\end{theorem}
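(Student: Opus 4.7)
The plan is to prove the theorem by an approximation argument combined with a priori estimates that depend only on the stated norms, following the general strategy of Caffarelli--Nirenberg--Spruck but with the data smoothness relaxed to what is available here.

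First, I would regularize the data. Mollify $f$ to obtain $f_k \in C^\infty(\bar{\Omega})$ with $\inf f_k \ge \tfrac{1}{2}\inf f > 0$ and $f_k \to f$ in $C^{\alpha}(\bar{\Omega})$, and extend/mollify $g$ to a family $g_k \in C^\infty(\bar{\Omega})$ with $\|g_k\|_{C^3}$ uniformly bounded in terms of $\|g\|_{C^3}$. If necessary, approximate $\Omega$ by smooth uniformly convex domains $\Omega_k$ exhausting $\Omega$, preserving the uniform convexity constant up to a controlled factor (this is possible since $\partial\Omega \in C^3$). By the Caffarelli--Nirenberg--Spruck theorem quoted above, the approximate Dirichlet problems admit smooth strictly convex solutions $u_k$.

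The core task is then a uniform $C^{2,\alpha}$ a priori bound for $u_k$ depending only on $n, \alpha, \inf f, \Omega, \|f\|_{C^{\alpha}}, \|g\|_{C^3}$. I would proceed in four stages. The $C^0$ bound comes from comparison with quadratic barriers built from $\inf f$ and $\|g\|_{C^0}$. The $C^1$ bound combines convexity (which controls the gradient by boundary differences) with boundary gradient estimates obtained from local upper and lower barriers that exploit the uniform convexity of $\partial\Omega$ and $\|g\|_{C^3}$. Interior second derivative bounds follow from the classical Pogorelov estimate, while boundary second derivative bounds are the most subtle step, requiring a barrier construction in the spirit of Ivochkina and Caffarelli--Nirenberg--Spruck that uses the uniform convexity of $\Omega$ in an essential way and depends only on $\|g\|_{C^3}$ and $\|f\|_{C^{\alpha}}$. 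Once $\lambda I \le D^2 u_k \le \Lambda I$ holds uniformly, the equation is uniformly elliptic and concave in $D^2 u$ (since $M \mapsto \log \det M$ is concave on positive definite matrices), so the Evans--Krylov theorem gives an interior $C^{2,\alpha}$ estimate, and Krylov's boundary refinement together with the $C^3$ regularity of $\partial\Omega$ upgrades this to a global $C^{2,\alpha}$ bound.

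Finally, I would extract a subsequence of $u_k$ converging in $C^2(\bar{\Omega})$ to a limit $u \in C^{2,\alpha}(\bar{\Omega})$ by Arzel\`a--Ascoli; passing to the limit in the equation and the Dirichlet condition yields a convex solution, and lower semicontinuity of the H\"older seminorm preserves the a priori bound. The hardest step will be the boundary $C^2$ estimate under only $C^3$ boundary and $C^{\alpha}$ right-hand side: matching these function spaces requires a careful barrier construction and a short interpolation/bootstrap argument rather than a direct invocation of the smooth-data theory, and it is this matching of regularity exponents that yields the explicit dependence of $C$ claimed in the statement.
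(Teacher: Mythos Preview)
The paper does not prove this theorem; it is quoted from Awanou's 2013 paper \cite{A13P} as a background existence/regularity result and is stated without any argument. There is therefore nothing in the paper to compare your proposal against.

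As for the content of your sketch: it is a sensible outline of the standard route (approximate by smooth data, invoke the Caffarelli--Nirenberg--Spruck existence, derive uniform $C^0$, $C^1$, $C^2$ bounds via barriers and Pogorelov, then upgrade to $C^{2,\alpha}$ by Evans--Krylov with Krylov's boundary version, and pass to the limit). That is precisely the framework one would expect for a result of this type, and your identification of the boundary $C^2$ estimate under only $C^\alpha$ right-hand side and $C^3$ boundary as the delicate point is accurate. If you were asked to supply a full proof, the sketch would need substantial work at that step, but as an outline it is on the right track.
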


In general, there are at least three different notions of solutions which have been 
studied in the literature besides the 
classic solution: one is called Aleksandrov solution,  
another one is viscosity solution,  and the next one is Brainer's solution, according to the 
monograph by Villani, 2003, see page 129 in \cite{V03}. 
The theory for the \MAE is deep (cf. \cite{CC95}, \cite{E98}, \cite{V03} and \cite{V08}). 
In particular, the regularity of
the solution has been extensively studied (cf. e.g. \cite{C90},  \cite{W96},  \cite{CLW21}). 
In a landmark paper \cite{C90}, Cafferelli showed that the solution of the \MAE has 
an interior regularity over $\Omega'\subset \Omega$, i.e. $u\in H^{2, p}(\Omega')$ for any open set $\Omega'$ inside $\Omega$. 
Furthermore,   the solution has $H^2$ regularity over the entire domain, as established in \cite{W96}:
\begin{theorem}(Wang, 1996\cite{W96})
Let $\Omega$ be a strictly convex domain in $\mathbb{R}^n.$ If $\partial \Omega$ and $g$ in the equation (2) are $C^3$ smooth, and $f(x)\in C^{1,1}(\bar{\Omega}),$ then the solution $u\in C^{2+\alpha}(\bar{\Omega}).$
\end{theorem}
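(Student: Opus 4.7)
The plan is to upgrade the regularity of the (already existing) convex Aleksandrov solution $u$ from the interior out to the boundary, following Caffarelli's regularity program for the Monge-Amp\`ere equation.

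First, I would establish that $u$ is strictly convex in $\Omega$. Together with the strict convexity of $\Omega$ and the hypothesis that $g$ extends to a $C^3$ function compatible with a strictly convex boundary datum, Caffarelli's structure theorem for sections rules out any line segment in the graph of $u$, so $u$ is strictly convex throughout $\Omega$. (I would assume implicitly that $f\ge c_0>0$, since $\det(D^2 u)$ must be strictly positive for a strictly convex $C^2$ solution.) Then Caffarelli's interior $C^{2,\alpha}$ theorem \cite{C90} applied with $f\in C^{1,1}\subset C^{0,\alpha}$ yields $u\in C^{2,\alpha}_{\mathrm{loc}}(\Omega)$, but with an estimate that may degenerate as one approaches $\partial\Omega$.

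The main obstacle is the boundary $C^{2,\alpha}$ estimate. I expect it to proceed in two substeps. First, a $C^{1,\alpha}(\bar\Omega)$ bound obtained from quadratic barriers built using the $C^3$ strict convexity of $\partial\Omega$ and $g$; this yields quantitative control on the shape of sections $S_h(x_0)$ centered at boundary points $x_0$, and in particular shows that, after a suitable affine rescaling, they are comparable to balls of uniformly bounded eccentricity. Second, a boundary pointwise $C^{2,\alpha}$ estimate: differentiating the equation tangentially along $\partial\Omega$ produces a linear equation for a tangential derivative $u_T$ whose coefficient matrix is the cofactor matrix of $D^2 u$, and Krylov's boundary H\"older estimate then gives tangential second derivatives of $u$ H\"older continuous up to $\partial\Omega$; mixed and normal second derivatives are subsequently recovered from $\det(D^2 u)=f$ by solving algebraically for $u_{\nu\nu}$ once the tangential Hessian block is controlled.

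Finally, a standard covering of $\bar\Omega$ by interior balls (handled by the interior estimate) and boundary half-balls (handled by the boundary estimate) patches the two estimates into the global statement $u\in C^{2,\alpha}(\bar\Omega)$. The hardest point is the affine rescaling analysis at the boundary: although boundary sections are a priori elongated because the tangent plane at $x_0\in\partial\Omega$ touches $\partial\Omega$ only at $x_0$, one must show they are nevertheless comparable to ellipsoids of bounded eccentricity — a property that crucially uses the strict convexity and $C^3$ smoothness of $\partial\Omega$ and $g$ assumed in the theorem, and without which the Krylov-type boundary estimate cannot be transferred back to $u$.
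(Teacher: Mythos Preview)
The paper does not prove this theorem at all: it is stated purely as a cited background result from Wang~\cite{W96}, with no accompanying argument. There is therefore nothing in the paper to compare your proposal against.

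That said, your sketch is a reasonable outline of the standard Caffarelli--Wang program for global $C^{2,\alpha}$ regularity (interior strict convexity and $C^{2,\alpha}$ from \cite{C90}, then a boundary $C^{2,\alpha}$ estimate via control of boundary sections and a Krylov-type argument). If you intend this as an independent proof rather than a reproduction of the paper's argument, be aware that the genuinely hard step --- the uniform geometry of boundary sections and the resulting boundary second-derivative H\"older estimate --- is precisely the content of Wang's paper and is not something one can fill in with a paragraph; your proposal correctly identifies this as the crux but does not supply the mechanism. For the purposes of this paper, however, no proof is expected: the authors are simply invoking the result to justify using $C^2$ splines.
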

Due to these regularity results, we can use $C^2$ smooth trivariate splines to approximate the solution $u$ 
under the conditions $f\in C^{1,1}(\bar{\Omega})$, $g\in C^3(\partial \Omega)$ and $\Omega$ being a strictly convex domain. 
In our computation, we are able to solve the Monge-Amp\'ere equation over domains with uniform 
positive reach (cf. \cite{GL20}) which include strictly convex domains as a special case. Additionally, we can use our method to experiment with the solution (\ref{MA}) even when $f$ is not in $C^{1,1}(\bar{\Omega})$.  

The numerical solution of the Monge-Ampere equation (MAE) is an active area of research, with many researchers developing different numerical methods and analyzing their theoretical convergence. As mentioned in \cite{BFO10}, the MAE poses several challenges for numerical solutions. The first challenge is that the equation is fully nonlinear, which means that geometric solutions or viscosity solutions must be used as weak solutions. The second challenge is the convex constraint, as the equation might not have a unique solution without it. 

The popular finite element method is not directly applicable because of the involvement of the Hessian of the 
solution. This restricts the use of the Finite Element Method (FEM) or general 
Galerkin projection methods, discontinuous Galerkin method or continuous Galerkin method.  
However, there are several remedy approaches based on the finite element method such as a 
mixed finite element method, vanishing moment method, etc.  See \cite{A13}, \cite{A14}, 
\cite{AL14} and \cite{FN09a}.  

Besides of finite element type methods, there are many finite difference methods, as seen in  \cite{BFO10}, \cite{A16}, \cite{LFOX16}, \cite{VNNP20}, \cite{LG21}. 
Moreover, many interesting approaches are based on the classic finite difference method as demonstrated in \cite{BS19}, \cite{LX20}, \cite{VNNP20}. 
However, these methods have a weakness: they do not have analytic form of solution over 
the entire domain.  
In addition, we can find time marching methods in \cite{A15}, \cite{A13P}, and 
least squares relaxation methods in \cite{DG03}, \cite{DG04}, \cite{CGG18}.  

Let us be more precise on the numerical methods mentioned above. 
The least square notion of the solution was proposed and studied in \cite{DG03}, \cite{DG04}, 
and \cite{CGG18}. Especially, this least square approach using a relaxation algorithm of the 
Gauss-Seidel-type iterations to decouple differential operators 
in \cite{CGG18}.  The approximation relies on mixed low order finite element methods with 
regularization techniques. Several 3D examples were demonstrated to show the performance 
of this method.   In this paper, we will compare the numerical results from our method to those to in \cite{CGG18} to show that our method produces more accurate results. 
These comparisons will be presented in the last section. 

In \cite{A15}, a time marching approach is used to solve the \MAE. 
Given $\nu>0,$ the researcher considered the sequence of iterates 
\begin{equation}
	\label{A15}
	-\nu \Delta u_{k+1}=-\nu \Delta u_k+\text{det}D^2 u_k-f, ~u_{k+1}=g ~~\text{on} ~\partial \Omega.
\end{equation}
He used the discrete version of Newton's method in the vanishing moment methodology. And he 
showed the convergence of the iterative method for solving the nonlinear system. 
We shall also compare  his numerical results with our results
in the last section to  show that our proposed method is also more accurate.    

In \cite{VNNP20}, the researchers introduced the meshless Generalized Finite Difference 
Method (GFDM) in both 2D and 3D settings. They tested several examples using the Cascadic 
iterative algorithm over convex and non-convex domains. 
We will compare our proposed method with the results from the Cascadic iterative algorithm in the last section to demonstrate that our method is also better.  

We now describe our numerical method to solve (\ref{MA}) by using trivariate spline 
functions  over a tetrahedralization of $\Omega$. See \cite{LS07}, 
\cite{ALW06}, \cite{S15}, \cite{LL21} for theoretical properties and numerical implementation
of bivariate/triavariate spline functions.  In addition, there are several dissertations written 
to explain how to implement and how to use multivariate splines for the numerical solution of 
Helmholtz equations, Maxwell equations and 3D surface reconstruction. See \cite{A03}, 
\cite{M19} and \cite{X19}.  
There are several reasons why we use trivariate splines for the numerical solution of the \MAE. 
One is that we can use trivariate splines with smoothness $r\ge 2$ to approximate the solution
$u$ of the \MAE over an arbitrary convex polyhedral domain.  Due to the $C^2$ smoothness of spline function, we can calculate the Hessian of the solution, so that we simply use the collocation method instead of the weak formulations in  \cite{A13}, \cite{A14}, etc..
Many researchers adopted the iterative algorithm called the fixed point algorithm introduced in \cite{BFO10}:
\begin{align}
	\label{2DMA1}
	\Delta u_{k+1}= ((\Delta u_k)^n+a (f- \det D^2 u_k))^{\frac{1}{n}}
\end{align}
along with the prescribed Dirichlet boundary conditions with $a=2$ and $n=2$. The researchers
in \cite{BFO10} explained that this is a fixed point method as the true solution $u$ satisfies
(\ref{2DMA1}) trivially. 

In \cite{A15}, this iterative algorithm is generalized to the 3D setting with $n\ge 3$ for 
various $a>0$. In particular, the researcher in \cite{A15} explained that the iteration 
(\ref{2DMA1}) is well-defined for $a\le n^n$ as $\det (D^2 u_k)\le \displaystyle \frac{1}{n^2}
(\Delta u)^n$.   Numerical results in \cite{A15} are demonstrated in 
the framework of the spline element method with $a=2$ for the 2D case and $a=9$ for the 3D case. 

In this paper, we shall use the following iterative method:
\begin{align}
	\label{2DMA}
	\Delta u_{k+1}= ((\Delta u_k)^n+n^n (f-\det D^2 u_k))^{\frac{1}{n}}
\end{align}
to handle the nonlinearity of the \MAE where $n=3$.

However, another requirement of the solution of the \MAE is that $u$ must be convex in order for the equation to be elliptic. Without this constraint, the equation does not have a unique solution. (For example, taking 
boundary data $g = 0$, if $u$ is a solution, then $-u$ is also a solution in $\mathbb{R}^2$.) Many numerical methods mentioned above failed to enforce this convexity constraint. The convexity of $u$ is equivalent to the positive definiteness of the Hessian matrix $D^2u$. In terms of the eigenvalues $\lambda_1\ge \lambda_2\ge \lambda_3$ of $D^2u$, we will 
ensure that three eigenvalues  
$\lambda_1(k)\ge \lambda_2(k)\ge\lambda_3(k)$ of the $k$th iteration $u_k$ 
in a spline space satisfy  
$\lambda_1(k)+ \lambda_2(k)+\lambda_3(k) \ge 0$ as well as $\lambda_1(k) 
\lambda_2(k)\lambda_3(k)>0$, although 
they are not enough to ensure the convexity of $k$th spline solution $u_k$.  

This paper is organized as follows. 
In Section \ref{sec:1}, we first explain trivariate splines, domains with uniformly positive reach, and the spline collocation method for the 
Poisson equation which is the same as the one  discussed in \cite{LL21}. 
 In Section \ref{sec:2}, we introduce the spline collocation method for 
the \MAE and its average algorithm, and establish 
three different versions of convergence results. 
Finally, in the last section \ref{sec:3}, we present numerical 
results for several 3D examples of smooth and convex solutions, as well as 
nonsmooth convex solution over  convex and nonconvex bounded domains to  
demonstrate the effectiveness of our proposed method.  
We compare our results with those of several existing numerical methods to show  the accuracy and efficiency of our method. 
Finally, we shall present some examples
for the free movement in 2D and 3D settings to show how the density from one place 
is moved to another place. This will demonstrate further that our proposed method is versatile 
enough.   
\section{Preliminaries}
\label{sec:1}
\subsection{Trivariate Splines}
\label{sec:1.1}
Let us quickly summarize the essentials of trivariate splines in this section. 
Given a tetrahedron $T$, we write $|T|$ for the length of its longest edge, and $\rho_T$ for the radius of the largest inscribed ball in $T$. 
For any polygonal domain $\Omega\subset \mathbb{R}^3$, 
let $\triangle:=\{T_1,\cdots, T_n\}$ be a tetrahedralization of 
$\Omega$ which is a collection of tetrahedra and $\mathcal{V}$ be the set of vertices of $\triangle$. 
{We called a tetrahedralization as a quasi-uniform tetrahedralization if all tetrahera $T$ in $\triangle$ have comparable sizes in the sense that 
	\begin{equation*}
	\frac{|T|}{\rho_T}\le C<\infty, ~~~~\text{all tetrahera} ~ T\in \triangle ,
		\end{equation*}
	where $\rho_T$ is the inradius of $T$. Let $|\triangle|$ be the length of the longest edge in $\triangle.$} 

Next for a tetrahedron  $T=(\bfv_1, \bfv_2, \bfv_3, \bfv_4) \in \triangle,$ 
we define the barycentric coordinates $(b_1, b_2,b_3, b_4)$ 
of a point $(x,y,z)\in \Omega$ as the solution to the following system of equations 
\begin{eqnarray*}
	b_1+b_2+b_3+b_4=1\\
	b_1 v_{1,x}+b_2 v_{2,x}+b_3 v_{3,x}+ b_4 v_{4,x} =x\\
	b_1 v_{1,y}+b_2 v_{2,y}+b_3 v_{3,y}+ b_4 v_{4,y} =y\\
	b_1 v_{1,z}+b_2 v_{2,z}+b_3 v_{3,z}+ b_4 v_{4,z} =z,
\end{eqnarray*}
where the vertices $\bfv_i=(v_{i,x}, v_{i,y}, v_{i,z})$ for $i=1,2,3, 4$. $b_1, \cdots, b_4$ are nonnegative if $(x,y,z)\in T.$ Next we use the barycentric coordinates to define the Bernstein polynomials of degree $D$:
\begin{eqnarray*}
	B^T_{i,j,k,\ell}(x,y,z):=\frac{{D}!}{i!j!k!\ell!}b_1^i b_2^j b_3^k b_4^\ell, ~i+j+k+\ell=D,
\end{eqnarray*}
which form a basis for the space $\mathcal{P}_D$ of polynomials of total degree $D$. 
Therefore, we can represent all $s\in \mathcal{P}_D$ in B-form:
\begin{eqnarray*}
	s|_T=\sum_{i+j+k+\ell=D}c_{ijk\ell}^T B^T_{ijk\ell}, \forall T\in \triangle,
\end{eqnarray*}
where the B-coefficients $c^T_{i,j,k,\ell}$ are uniquely determined by $s$. 
Let $\bfc=\{c^T_{ijk\ell}, i+j+k+\ell=D, T\in \triangle\}$
be the coefficient vector associated with spline function $s$. 

Moreover, for given $T=(\bfv_1,\bfv_2, \bfv_3, \bfv_4)\in \triangle$, 
we define the associated set of domain points to be 
\begin{equation}
	\label{domainpoints}
	\mathcal{D}_{D,T}:= \{\frac{i\bfv_{1}+j\bfv_{2}+k\bfv_{3}+\ell \bfv_4}{D} \}_{i+j+k=D}.
\end{equation}
Let $\mathcal{D}_{D,\triangle} = \cup_{T\in \triangle} \mathcal{D}_{D,T}$ be the domain 
points of tetrahedral $\triangle$ and degree $D$.  

We use the discontinuous spline space $S^{-1}_D(\triangle):= 
\{s|_{T} \in \mathcal{P}_D, T\in \triangle\}$ as a base. 
Then we add the smoothness conditions to define the space 
$\mathcal{S}^r_D:=C^r(\Omega)\cap S^{-1}_D(\triangle).$ 
The smoothness conditions are explained in \cite{LS07}. Indeed, see Theorem  15.31 in \cite{LS07}. 
We use $C^r$ smooth spline functions in $H^2(\Omega)$ with $r\geq 1$ 
and the degree $D$ of splines sufficiently large satisfying $D\geq 3r+2$ in $\mathbb{R}^2$ and 
$D\geq 6r+3$ in $\mathbb{R}^3$. 
And we get the following Lemma in \cite{LS07}
\begin{lemma}\label{spline_LS}
 For all $u\in W^{m+1,p}(\Omega)$ for some $0\le m\le D$ and $1\le p\le \infty$, there exists a quasi-interpolatory spline $s_u\in \mathcal{S}^r_D(\triangle)$ such that 
	$$\|D^\alpha (u-s_u)\|_{L^p(\Omega)}\le C |\triangle|^{m+1-|\alpha|}|u|_{m+1,p,\Omega},$$
	for all $0\le |\alpha|\le m,$ where $|\cdot|_{m+1,p,\Omega}$ is a semi-norm, $C$ is a positive constant independent of $u$ and $|\triangle|$ but is dependent on the geometry of $\triangle$.
	\end{lemma}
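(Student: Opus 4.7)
The plan is to prove this as a standard quasi-interpolation estimate, following the classical framework of polynomial reproduction plus a Bramble–Hilbert localization argument that is adapted to the $C^r$ smooth trivariate spline setting.

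First I would construct the operator $Q_{\triangle}:L^p(\Omega)\to \mathcal{S}^r_D(\triangle)$ explicitly. The standard route is to work with a stable local basis for $\mathcal{S}^r_D(\triangle)$ indexed by a minimal determining set $\mathcal{M}\subset \mathcal{D}_{D,\triangle}$: each B-coefficient $c^T_{ijk\ell}$ is either directly an element of $\mathcal{M}$ or is written as a (uniformly bounded) linear combination of coefficients associated with $\mathcal{M}$ via the $C^r$ smoothness conditions across faces. Under the degree assumption $D\ge 6r+3$ such a stable minimal determining set exists (this is exactly the ingredient needed in Theorem 15.31 of \cite{LS07}). For each $\xi\in \mathcal{M}$ I would select a small star neighborhood $\Omega_\xi$ consisting of a bounded number of tetrahedra containing $\xi$, compute a locally averaged polynomial $p_{\xi}\in \mathcal{P}_D$ that best fits $u$ on $\Omega_\xi$ in an $L^p$ sense, and set the coefficient of $Q_\triangle u$ at $\xi$ equal to the B-coefficient of $p_\xi$ at $\xi$. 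By construction this yields a spline $s_u:=Q_\triangle u\in \mathcal{S}^r_D(\triangle)$.

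Second I would verify two structural properties: (i) \emph{polynomial reproduction}: $Q_\triangle q=q$ for every $q\in \mathcal{P}_m$ with $m\le D$, because the local fit $p_\xi$ equals $q$ whenever $u=q$ on $\Omega_\xi$; and (ii) \emph{local $L^p$ stability}: $\|Q_\triangle u\|_{L^p(T)}\le C\|u\|_{L^p(\Omega_T)}$ for every $T\in \triangle$, where $\Omega_T$ is a bounded union of tetrahedra surrounding $T$. Stability uses the uniformly bounded coefficients of the smoothness relations (which depend on the shape regularity of $\triangle$), the equivalence between the $L^p$ norm of a Bernstein polynomial on $T$ and the $\ell^p$ norm of its B-coefficients (scaled by $|T|^{3/p}$), and the boundedness of the local projection used to define $p_\xi$.

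Third I would localize. Fix a tetrahedron $T$ and apply the reproduction property to obtain, for any $q\in \mathcal{P}_m$,
\begin{equation*}
\|D^\alpha(u-s_u)\|_{L^p(T)} \le \|D^\alpha(u-q)\|_{L^p(T)} + \|D^\alpha Q_\triangle(u-q)\|_{L^p(T)}.
\end{equation*}
For the second term I would apply the Markov inequality on the polynomial $Q_\triangle(u-q)|_T\in\mathcal{P}_D$ to pull out $|T|^{-|\alpha|}$ and then use local stability to obtain
\begin{equation*}
\|D^\alpha Q_\triangle(u-q)\|_{L^p(T)} \le C\,|T|^{-|\alpha|}\,\|u-q\|_{L^p(\Omega_T)}.
\end{equation*}
Then I would invoke the Bramble–Hilbert lemma on the patch $\Omega_T$ to choose $q$ so that $\|u-q\|_{L^p(\Omega_T)}\le C|\triangle|^{m+1}|u|_{m+1,p,\Omega_T}$ and similarly $\|D^\alpha(u-q)\|_{L^p(T)}\le C|\triangle|^{m+1-|\alpha|}|u|_{m+1,p,\Omega_T}$. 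Raising to the $p$-th power, summing over $T\in\triangle$, and using the finite overlap of the patches $\{\Omega_T\}$ (a consequence of quasi-uniformity) yields the claimed global bound; the case $p=\infty$ follows by taking suprema instead.

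The main obstacle is step one: producing an operator into $\mathcal{S}^r_D(\triangle)$ that is simultaneously $C^r$ smooth, locally $L^p$ stable with a constant that depends only on the geometry of $\triangle$, and reproduces polynomials of degree $D$. This is where the hypothesis $D\ge 6r+3$ is essential, since enforcing $C^r$ smoothness across faces around each interior edge and vertex imposes many linear constraints on the B-coefficients, and a stable minimal determining set is known to exist in trivariate settings only once the degree exceeds this threshold. Once $Q_\triangle$ is in hand with these three properties, the remaining analysis is a routine Bramble–Hilbert argument.
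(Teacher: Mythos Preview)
Your outline is correct and is essentially the standard quasi-interpolation argument; there is no gap. Note, however, that the paper does not actually prove this lemma: it is stated as a quotation from \cite{LS07} (``we get the following Lemma in \cite{LS07}''), so there is no in-paper proof to compare against. What you have sketched---stable local basis via a minimal determining set under the degree condition $D\ge 6r+3$, polynomial reproduction, local $L^p$ stability, Markov inequality, and a Bramble--Hilbert localization with finite overlap---is precisely the machinery developed in that reference for trivariate $C^r$ splines, so your route coincides with the cited source rather than offering an alternative.
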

In addition, we shall use Markov inequality(cf. \cite{LS07}):
\begin{equation}
\|\nabla s\|_{\infty, \Omega}\le \frac{C}{|\triangle|} \|s\|_{\infty, \Omega}, \quad \forall s\in \mathcal{S}^r_D(\triangle)
\end{equation}
for a positive constant $C$ independent of $s$ and the size $|\triangle|$ of tetrahedralization $\triangle$.

\subsection{Domains with Uniformly Positive Reach} 
Let us  recall a concept on domains of interest explained in \cite{GL20}. 
\begin{definition}
	Let $K\subseteq \mathbb{R}^n$ be a non-empty set. Let $r_K$ be the supremum of the number $r$ 
	such that every points in 
	$$P=\{x\in \mathbb{R}^n: \text{dist}(x,K)<r\}$$
	has a unique projection in $K.$ The set $K$ is said to have a positive reach if $r_K>0.$ 
\end{definition}
A domain with $C^2$ boundary has a positive reach. Sets of positive reach are much more general than convex sets.
Let $B(0,\epsilon)$ be the closed ball centering at 0 with radius $\epsilon>0,$ and let $K^c$ stand for the complement of the set $K\in \mathbb{R}^n.$ For any $\epsilon >0,$ the set
$$E_\epsilon (K):= (K^c+B(0,\epsilon))^c \subseteq K$$
is called an $\epsilon $-erosion of $K.$ Next we recall the following definition from  \cite{GL20}. 
\begin{definition}
A set $K\subseteq \mathbb{R}^n$ is said to have a uniformly positive reach $r_0$ if there exists 
some $\epsilon_0>0$ such that for all $\epsilon \in [0,\epsilon_0], E_\epsilon (K)$ has a 
positive reach at least $r_0.$ 
\end{definition}
Many examples of domains with positive reach can be found in \cite{GL20}. 
And we have the following property about these domains 
\begin{lemma}
If $\Omega \subset \mathbb{R}^n$ is of positive reach $r_0$, then for any $0<\epsilon<r_0$, the 
boundary of $\Omega_\epsilon :=\Omega+B(0,\epsilon)$ containing $\Omega$ is of $C^{1,1}$. 
Furthermore, $\Omega_\epsilon$ has a positive reach $\geq r_0-\epsilon.$
\end{lemma}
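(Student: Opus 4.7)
The plan is to invoke Federer's theory of sets of positive reach, where the central tool is the distance function $d_\Omega(x) := \inf_{y \in \Omega} \|x - y\|$ together with its nearest-point projection $\pi_\Omega$. By the definition of positive reach $r_0$, the projection $\pi_\Omega$ is well-defined and single-valued on the open tube $U := \{x \in \mathbb{R}^n : d_\Omega(x) < r_0\}$. A standard fact (Federer) is that $\pi_\Omega$ is locally Lipschitz on $U$; in particular, on any closed sub-tube $\{x : d_\Omega(x) \le \epsilon\}$ with $\epsilon < r_0$ the map $\pi_\Omega$ is Lipschitz with a constant depending only on $r_0/(r_0-\epsilon)$.

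For the first assertion, I would first observe that $\Omega_\epsilon = \{x : d_\Omega(x) \le \epsilon\}$ and hence $\partial \Omega_\epsilon = \{x : d_\Omega(x) = \epsilon\}$ is a level set of $d_\Omega$. On the tube $\{0 < d_\Omega < r_0\}$, $d_\Omega$ is $C^1$ with
\[
\nabla d_\Omega(x) = \frac{x - \pi_\Omega(x)}{d_\Omega(x)},
\]
a formula that one proves by the usual squeeze between difference quotients using the unique projection property. Since $\pi_\Omega$ is Lipschitz on the slab containing $\partial \Omega_\epsilon$ and $d_\Omega \equiv \epsilon$ is bounded below there, the gradient $\nabla d_\Omega$ is Lipschitz near $\partial \Omega_\epsilon$. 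Because $|\nabla d_\Omega| = 1$ does not vanish, the implicit function theorem for $C^{1,1}$ functions realizes $\partial \Omega_\epsilon$ locally as the graph of a $C^{1,1}$ function, giving the required boundary regularity. Obviously $\Omega \subset \Omega_\epsilon$, which addresses the ``containing $\Omega$'' part.

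For the reach bound, I would show that for any $x$ with $d_{\Omega_\epsilon}(x) < r_0 - \epsilon$ the nearest point in $\Omega_\epsilon$ is unique. If $x \in \Omega_\epsilon$ the statement is trivial, so assume $x \notin \Omega_\epsilon$, i.e.\ $d_\Omega(x) > \epsilon$. Then $d_\Omega(x) = d_{\Omega_\epsilon}(x) + \epsilon < r_0$, so $\pi_\Omega(x)$ is uniquely defined. The candidate nearest point in $\Omega_\epsilon$ is
\[
y^* := \pi_\Omega(x) + \epsilon\,\frac{x - \pi_\Omega(x)}{\|x - \pi_\Omega(x)\|},
\]
which lies on $\partial \Omega_\epsilon$ and satisfies $\|x - y^*\| = d_\Omega(x) - \epsilon = d_{\Omega_\epsilon}(x)$. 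To verify uniqueness, suppose another $y' \in \Omega_\epsilon$ realized the distance; writing $y' = z' + v'$ with $z' \in \Omega$ and $\|v'\| \le \epsilon$ leads via the triangle inequality to $\|x - z'\| \le d_\Omega(x)$, forcing $z' = \pi_\Omega(x)$ and then $v'$ collinear with $x - \pi_\Omega(x)$, hence $y' = y^*$.

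The main obstacle is the Lipschitz estimate on $\pi_\Omega$ used in Step 1. A fully self-contained argument would essentially reproduce Federer's proof (controlling $\|\pi_\Omega(x) - \pi_\Omega(y)\|$ by a curvature-type inequality that exploits the unique-projection condition on balls of radius up to $r_0$), so the cleanest exposition is to cite his result and then carry out the level-set/gradient calculation above; the remaining steps are then routine.
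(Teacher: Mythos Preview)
The paper does not actually supply a proof of this lemma; it is stated as a known property of sets with positive reach, in the spirit of the reference \cite{GL20} (Gao--Lai) and ultimately Federer's foundational work. So there is no ``paper's own proof'' to compare against.

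Your proposal is the standard route and is essentially correct. The identification $\partial\Omega_\epsilon=\{d_\Omega=\epsilon\}$, the gradient formula $\nabla d_\Omega(x)=(x-\pi_\Omega(x))/d_\Omega(x)$ on $\{0<d_\Omega<r_0\}$, and the Lipschitz bound on $\pi_\Omega$ (Federer, Theorem~4.8 in his 1959 paper) together give $d_\Omega\in C^{1,1}$ near the level set, and the implicit function theorem then yields a $C^{1,1}$ hypersurface. Your uniqueness argument for the projection onto $\Omega_\epsilon$ is also fine: the key identity $d_{\Omega_\epsilon}(x)=d_\Omega(x)-\epsilon$ for $x\notin\Omega_\epsilon$ reduces any minimizer $y'=z'+v'$ to the case $z'=\pi_\Omega(x)$ with equality in the triangle inequality, forcing collinearity and hence $y'=y^*$.

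Two small caveats worth tightening in a final write-up. First, positive reach is a notion for \emph{closed} sets, so the identification $\Omega_\epsilon=\{d_\Omega\le\epsilon\}$ tacitly uses that $\Omega$ (or its closure) is closed; the paper is informal on this point, and you should be explicit. Second, you correctly flag that the Lipschitz estimate on $\pi_\Omega$ is the non-elementary input; citing Federer is the right call, since reproducing that bound from scratch would dominate the argument.
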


In \cite{GL20}, Gao and Lai proved the following regularity theorem 
\begin{theorem}\label{thm:regularity} Let $\Omega$ be a bounded domain. Suppose the closure of 
$\Omega$ is of uniformly positive reach $r_\Omega$. For any  $f\in L^2(\Omega),$ let $u\in 
H^1_0(\Omega)$ be the unique weak solution of the Dirichlet problem:
\begin{align*}
	\begin{cases}
			-\Delta u&=f~~in~\Omega\\
			u&=0~~on~\partial \Omega
		\end{cases}
\end{align*}
Then $u\in H^{2}(\Omega)$ in the sense that
$$\sum_{i,j=1}^n \int_\Omega (\frac{\partial^2 u}{\partial x_i \partial x_j})^2\le C_0\int_\Omega f^2 dx$$
for a positive constant $C_0$ depending only on $r_\Omega$.
\end{theorem}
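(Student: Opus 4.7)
The plan is to reduce to the case of $C^{1,1}$ subdomains via interior approximation and then pass to the limit, using the fact that the elliptic regularity constant on a $C^{1,1}$ domain can be controlled purely in terms of a lower bound on the positive reach. First, using the erosion $E_\epsilon(\Omega) = \{x : B(x,\epsilon)\subset\Omega\}$, which by hypothesis has positive reach at least $r_\Omega$ for all sufficiently small $\epsilon$, I define
\begin{equation*}
\Omega^{(\epsilon)} := E_\epsilon(\Omega) + B(0,\epsilon/2).
\end{equation*}
By the preceding lemma applied to $E_\epsilon(\Omega)$, the domain $\Omega^{(\epsilon)}$ has $C^{1,1}$ boundary with positive reach at least $r_\Omega - \epsilon/2 \geq r_\Omega/2$, and a direct check shows $\Omega^{(\epsilon)} \subset \Omega$ and $\Omega^{(\epsilon)} \nearrow \Omega$ as $\epsilon \to 0^+$.

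Next, on each $\Omega^{(\epsilon)}$ I solve the auxiliary Dirichlet problem
\begin{equation*}
-\Delta u_\epsilon = f \text{ in } \Omega^{(\epsilon)}, \qquad u_\epsilon = 0 \text{ on } \partial \Omega^{(\epsilon)},
\end{equation*}
and invoke the classical $H^2$ regularity theorem for Dirichlet problems on $C^{1,1}$ domains (e.g., Grisvard) to get $u_\epsilon \in H^2(\Omega^{(\epsilon)})$ together with the estimate $\sum_{i,j}\|\partial_{ij}u_\epsilon\|_{L^2(\Omega^{(\epsilon)})}^2 \le C(\Omega^{(\epsilon)}) \|f\|_{L^2(\Omega^{(\epsilon)})}^2$.

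The main obstacle is to show that $C(\Omega^{(\epsilon)})$ can be bounded by a constant $C_0$ depending only on $r_\Omega$ (and the diameter of $\Omega$), independent of $\epsilon$. The key observation is that a $C^{1,1}$ domain of positive reach at least $r$ has principal curvatures bounded in magnitude by $1/r$, and its boundary admits local $C^{1,1}$ graph charts with Lipschitz constants for the gradient controlled by $1/r$. Tracking the constants in the usual elliptic regularity proof — Nirenberg's tangential difference quotient argument near the boundary combined with the interior Calder\'on--Zygmund estimate — one sees that the resulting constant depends only on these geometric quantities, giving the desired uniform bound $C(\Omega^{(\epsilon)}) \leq C_0(r_\Omega, \mathrm{diam}(\Omega))$.

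Finally, I extend each $u_\epsilon$ by zero to all of $\Omega$, using Poincar\'e on $\Omega$ to obtain a uniform $H^1_0(\Omega)$ bound and the uniform estimate above to get a uniform $H^2$ bound on $\Omega^{(\epsilon)}$. Extracting a weakly convergent subsequence in $H^1_0(\Omega)$ and identifying the limit as the unique weak solution $u$ (by testing the weak formulation against $C_c^\infty(\Omega)$ functions supported in some $\Omega^{(\epsilon_0)}$), I then apply weak lower semicontinuity of the $L^2$ norm on the second derivatives to conclude
\begin{equation*}
\sum_{i,j=1}^n\int_\Omega \Bigl(\frac{\partial^2 u}{\partial x_i\,\partial x_j}\Bigr)^2 \,dx \;\le\; \liminf_{\epsilon\to 0^+} \sum_{i,j=1}^n\int_{\Omega^{(\epsilon)}} \Bigl(\frac{\partial^2 u_\epsilon}{\partial x_i\,\partial x_j}\Bigr)^2 \,dx \;\le\; C_0 \int_\Omega f^2\,dx,
\end{equation*}
which is the claimed estimate.
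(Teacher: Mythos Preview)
The paper does not actually prove this theorem; it is quoted from \cite{GL20} (Gao and Lai) and stated without proof. So there is no ``paper's own proof'' to compare against, and your attempt stands on its own.

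Your overall strategy---approximate $\Omega$ from inside by $C^{1,1}$ domains built from the erosions $E_\epsilon(\Omega)$, invoke classical $H^2$ regularity on each, and pass to the limit---is sound and is in fact the line of argument used in \cite{GL20}. Two points deserve more care, however.

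First, and most importantly, your limiting step is not quite right as written. Extending $u_\epsilon$ by zero to $\Omega$ gives a function in $H^1_0(\Omega)$ but \emph{not} in $H^2(\Omega)$: the gradient has a jump across $\partial\Omega^{(\epsilon)}$. So you cannot speak of weak $H^2(\Omega)$ convergence of the extensions. The fix is to argue locally: for any compact $K\subset\Omega$ one eventually has $K\subset\Omega^{(\epsilon)}$, so the $u_\epsilon$ are uniformly bounded in $H^2(K)$; a diagonal argument yields weak $H^2_{\mathrm{loc}}(\Omega)$ convergence to some limit, which you then identify with $u$ via the $H^1_0(\Omega)$ convergence of the zero extensions. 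Weak lower semicontinuity on each $K$ gives $\sum_{i,j}\|\partial_{ij}u\|_{L^2(K)}^2\le C_0\|f\|_{L^2(\Omega)}^2$, and letting $K\nearrow\Omega$ finishes.

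Second, your uniform bound $C(\Omega^{(\epsilon)})\le C_0(r_\Omega,\mathrm{diam}\,\Omega)$ is asserted by ``tracking constants'' through Nirenberg's difference-quotient argument. This is correct in spirit, but there is a cleaner route that also explains why the stated constant depends only on $r_\Omega$ (not the diameter): the Miranda--Talenti type identity $\int_\Omega(\Delta v)^2=\sum_{i,j}\int_\Omega(\partial_{ij}v)^2+\text{(boundary term)}$ for $v\in H^2\cap H^1_0$ on a $C^{1,1}$ domain, where the boundary term is controlled by the principal curvatures and hence by $1/r_\Omega$. This is the mechanism used in \cite{GL20} and yields the constant in the form claimed.
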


\subsection{A Collocation Method for the Poisson Equation}
\label{sec:1.2}
For convenience, let us start with the Poisson equation
\begin{eqnarray}
	\label{Poisson}
	\Delta u(\bfx) &=& f(\bfx) \quad \forall \bfx\in \Omega \subset \mathbb{R}^3\cr
	u(\bfx) &=& g(\bfx), \quad \forall \bfx\in \partial \Omega
\end{eqnarray}
For given $\triangle$, let it be a tetrahedral partition of $\Omega$, we choose a set of domain points 
$\{ \xi_i\}_{i=1,\cdots, N}$ explained in the previous section as collocation points 
and let $s=\sum_{t\in \triangle}\sum_{|\alpha|=D}c^{t}_{\alpha} \mathcal{B}^t_\alpha$ in $S^r_D(\triangle)$ 
with the coefficient vector $\textbf{c}$ of $s$. Then we want to 
find the coefficient vector $ \textbf{c}$ of spline function 
satisfying the standard Poisson equation (\ref{Poisson}) at those collocation points
\begin{equation}
	\label{Poisson2}
	\begin{cases} 
		\Delta s(\xi_i)  & = f(\xi_i), \quad \xi_i\in \Omega \subset \mathbb{R}^n, \cr 
		s (\xi_i) & = g(\xi_i),  \quad \xi_i\in \partial \Omega,
	\end{cases}
\end{equation}
where $\{ \xi_i\}_{i=1,\cdots, N} \in \mathcal{D}_{D',\triangle}$  are the domain points of $\triangle$ of degree $D'>0$
as explained in (\ref{domainpoints}) in the previous section, where $D'$ will be different from $D$($D'>D$). 

Using these points, we let $K$ be the following matrix:
\begin{align*}
	K&:= \begin{bmatrix} \Delta (B^t_{i,j,k,l})(\xi_i) \end{bmatrix}.
\end{align*}

In general, the spline $s$ with coefficients in $\bf{c}$ is a discontinuous function.    
In order to make $s\in \mathcal{S}^r_D(\triangle)$, 
its coefficient vector $\textbf{c}$ must satisfy the constraints $H\textbf{c}=0$ for the smoothness conditions that 
the $\mathcal{S}^r_D(\triangle)$  functions  possess (cf. \cite{LS07}).  
Based on the smoothness conditions (cf. Theorem 2.28 or Theorem 15.38 in \cite{LS07}), 
we can construct matrices $H$ for the $C^r$ smoothness  conditions.
Then, our collocation method is to find ${\bf c}$ which solves the following constrained minimizations: 
\begin{align}
	\label{min1ma}
	\min_{\bf c} J({\bf c})=\frac{1}{2}(\|B{\bf c} -  G \|^2+\|H{\bf c} \|^2)\\ \text{subject to } 
	K\bf c= {\bf f}
\end{align}
where $B, G$ are from the boundary condition and $H$ is from the smoothness condition. Note that we need to 
justify that the minimization has a solution. In general,  we do not know if $ K{\bf c} = {\bf f}$  has a solution or 
not. However, we can show that a neighborhood of  $K \textbf{c}= \bf f$, i.e. 
\begin{equation}\label{neighbor}
	\mathbb{N}=\{\textbf{c}: \|K{\bf c}- {\bf f}\|\le \epsilon\}
\end{equation}
is not empty for an appropriate $\epsilon>0$ when $|\triangle|$ is small enough. 
 Indeed, let us use spline
approximation theorem (cf. \cite{LS07}) to have 
\begin{lemma}
	\label{lem3} 
	Suppose that $\Omega$ is a polygonal domain. 
	Suppose that $u\in H^3(\Omega)$. Then there exists a spline function $u_s\in S^r_D(\triangle)$ and a 
	positive constant $\hat{C}$ depending 	on $D\ge 1$ and $D'> D$ such that 
	\begin{eqnarray*}
		\|\Delta u(x,y,z)-\Delta u_s(x,y,z)\|_{L^\infty(\Omega)}\le \epsilon_1\hat{C}
	\end{eqnarray*}
where $\epsilon_1$ is a given tolerance and $\hat{C}$ depends on $\Omega, D, D'$.
\end{lemma}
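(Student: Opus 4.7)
The plan is to take $u_s\in\mathcal{S}^r_D(\triangle)$ to be a quasi-interpolatory spline for $u$ as supplied by Lemma \ref{spline_LS}, and to deduce the $L^\infty$ bound on $\Delta(u-u_s)$ from the pointwise estimate $|\Delta(u-u_s)|\le |(u-u_s)_{xx}|+|(u-u_s)_{yy}|+|(u-u_s)_{zz}|$. In this way the whole problem reduces to controlling each pure second derivative of the interpolation error, which is exactly what Lemma \ref{spline_LS} produces.

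First I would fix the smoothness $r\ge 2$ and degree $D\ge 6r+3$ so that second derivatives of splines in $\mathcal{S}^r_D(\triangle)$ are well defined, and select $m$ with $2\le m\le D$. Applying Lemma \ref{spline_LS} with $|\alpha|=2$ gives
\[
\|D^\alpha(u-u_s)\|_{L^p(\Omega)}\le C\,|\triangle|^{m-1}\,|u|_{m+1,p,\Omega},
\]
and summing over the three pure second-derivative multi-indices yields an analogous bound for $\Delta(u-u_s)$ in $L^p$. Taking $p=\infty$ and $m$ as large as allowed by the regularity of $u$ produces a factor $|\triangle|^{m-1}$ that can be made as small as one wishes by refining $\triangle$.

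To reach the $L^\infty(\Omega)$ estimate from $u\in H^3(\Omega)$ regularity alone, I would insert a density step. Using density of $C^\infty(\overline{\Omega})$ in $H^3(\Omega)$ on a polygonal Lipschitz domain, pick a smooth surrogate $\tilde u$ with $\|u-\tilde u\|_{H^3(\Omega)}$ as small as we please, apply the displayed inequality to $\tilde u$ with $p=\infty$ and large $m$ (so that $|\tilde u|_{m+1,\infty,\Omega}$ is finite), and absorb the residual into the constant. For any given tolerance $\epsilon_1$, refining $\triangle$ so that $C\,|\triangle|^{m-1}\,|\tilde u|_{m+1,\infty,\Omega}\le \epsilon_1\hat C$ then delivers the claim, with $\hat C$ depending on $\Omega, D, D'$.

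The main obstacle is the regularity gap: in three dimensions $H^3\not\hookrightarrow W^{2,\infty}$, so one cannot directly take $p=\infty$ in Lemma \ref{spline_LS} from the bare hypothesis $u\in H^3(\Omega)$. The mollification/density detour above circumvents this, and because $\epsilon_1$ is a prescribed tolerance (not a fixed quantity tied to $u$), the only quantitative requirement is that $|\triangle|$ be sufficiently small. This is exactly the regime needed to guarantee that the neighborhood $\mathbb{N}$ in \eqref{neighbor} is nonempty, which is the purpose for which the lemma is invoked later.
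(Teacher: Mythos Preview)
Your approach---take $u_s$ to be the quasi-interpolant from Lemma~\ref{spline_LS} and bound $\Delta(u-u_s)$ in $L^\infty$ by summing the pure second-derivative errors---is exactly what the paper intends: it offers no proof beyond the phrase ``use spline approximation theorem (cf.\ \cite{LS07}),'' which is precisely Lemma~\ref{spline_LS}. So at the level of strategy you and the paper agree.

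However, your density workaround for the regularity gap does not close. You propose to control $\|\Delta u-\Delta u_s\|_{L^\infty}$ by splitting through a smooth surrogate $\tilde u$: with $u_s$ taken as the quasi-interpolant of $\tilde u$, the piece $\|\Delta(\tilde u- u_s)\|_{L^\infty}$ is indeed handled by Lemma~\ref{spline_LS}, but the remaining piece $\|\Delta(u-\tilde u)\|_{L^\infty}$ is \emph{not} controlled by $\|u-\tilde u\|_{H^3(\Omega)}$ in three dimensions, precisely because of the non-embedding $H^3(\Omega)\not\hookrightarrow W^{2,\infty}(\Omega)$ that you yourself flagged. Making $\tilde u$ close to $u$ in $H^3$ says nothing about $\Delta(u-\tilde u)$ in $L^\infty$, so there is nothing to ``absorb into the constant.'' In fact, for a generic $u\in H^3(\Omega)\subset\mathbb{R}^3$ one only has $\Delta u\in L^6(\Omega)$ by Sobolev, so the left-hand side of the lemma can be infinite under the stated hypothesis. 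The statement really needs something like $u\in W^{3,\infty}(\Omega)$ (or $u\in C^{2}(\overline\Omega)$), under which Lemma~\ref{spline_LS} with $p=\infty$, $m=2$, $|\alpha|=2$ applies directly to give $\|\Delta(u-u_s)\|_{L^\infty}\le C\,|\triangle|\,|u|_{3,\infty,\Omega}$, and one then refines $\triangle$ to reach the tolerance $\epsilon_1\hat C$. The paper simply does not address this gap.
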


We thus consider a nearby problem of the minimization (\ref{min1ma}), which is:
\begin{align}
	\label{min2ma}
	\min_{\bf c} &\|B{\bf c} -G\|^2+ \|H{\bf c}\|^2,\\
	&\hbox{subject~ to ~} \|K{\bf c}- {\bf f}\|_{L^\infty} \le \epsilon_1.
\end{align}
It is easy to see that the minimizer of (\ref{min2ma})  clearly approximates the minimizer of (\ref{min1ma})
if $\epsilon_1\ll 1$. 
As the new minimization problem is convex and the feasible set is also convex, the minimization (\ref{min2ma}) will have
a unique solution if the feasible set is non-empty.

We may assume that our numerical solution $u_s$ approximates $u$ on $\partial \Omega$ very well 
in the sense that $\| u-u_s\| \le C\epsilon_2$ for a positive constant $C$. Denote $\|u\|_L:=\|\Delta u\|_{L^2(\Omega)}$.
In \cite{LL21}, Lai and Lee proved the following theorems
\begin{theorem}
	\label{mainthm2ma}
	Suppose $f$ and $g$ are continuous over bounded domain $\Omega \subseteq \mathbb{R}^d$ for 
	$d=2$ or $d=3$. Suppose that $u\in H^3(\Omega)$. When $\Omega$ is a domain with uniform positive reach, 
	we have the following inequality
	\begin{align*}
		\|u-u_s\|_{L^2(\Omega)} \le C\|u-u_s \|_{L}, \|\nabla (u-u_s)\|_{L^2(\Omega)} \le C\|u-u_s \|_{L}
	\end{align*}
	and 
	\begin{align*}
		\sum_{ i+j=2}\| \frac{\partial^2}{\partial x^i \partial y^j}u\|_{L^2(\Omega)}\le  C \|u-u_s \|_{L}
	\end{align*}
	for a positive constant $C$ depending on $\Omega$.
\end{theorem}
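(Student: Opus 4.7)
The plan is to set $w := u - u_s$ and bootstrap from $\|w\|_L = \|\Delta w\|_{L^2(\Omega)}$ upward to control the full $H^2$ seminorm, using the $H^2$-regularity theorem for domains of uniformly positive reach (Theorem \ref{thm:regularity}) together with Poincar\'e's inequality and integration by parts. The assumption that $\Omega$ has uniformly positive reach is used exactly once, but in the essential place: to lift $L^2$-control of $\Delta w$ to $L^2$-control of every entry of the Hessian $D^2 w$.

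First I would justify treating $w$ as effectively an element of $H^1_0(\Omega) \cap H^2(\Omega)$. Since $u = g$ on $\partial\Omega$ and the collocation minimization (\ref{min2ma}) forces $\|B{\bf c} - G\|$ to be minimized, we have $\|u - u_s\|_{L^\infty(\partial\Omega)} \le C\epsilon_2$ as noted just before the theorem. I would either absorb this controlled boundary discrepancy into the constant $C$, or subtract off a harmonic lifting $h$ of $w|_{\partial\Omega}$ so that $w - h \in H^1_0(\Omega)$, checking that $\|h\|_{H^2(\Omega)} = O(\epsilon_2)$ is dominated by $\|\Delta w\|_{L^2(\Omega)}$.

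Next, I would apply Theorem \ref{thm:regularity} to $w$ with right-hand side $-\Delta w \in L^2(\Omega)$. Because $\overline{\Omega}$ has uniformly positive reach, this yields directly
\begin{equation*}
\sum_{i,j=1}^{d} \left\|\frac{\partial^2 w}{\partial x_i \partial x_j}\right\|_{L^2(\Omega)}^{2} \le C_0 \|\Delta w\|_{L^2(\Omega)}^{2} = C_0 \|u-u_s\|_L^{2},
\end{equation*}
which is the third claimed inequality (understanding its right-hand side as $u - u_s$). For the $H^1$ and $L^2$ bounds, I would use integration by parts: since $w$ vanishes (up to the boundary error handled above) on $\partial\Omega$,
\begin{equation*}
\|\nabla w\|_{L^2(\Omega)}^{2} = -\int_{\Omega} w\, \Delta w\, d\bfx \le \|w\|_{L^2(\Omega)} \|\Delta w\|_{L^2(\Omega)}.
\end{equation*}
Combining this with the Poincar\'e inequality $\|w\|_{L^2(\Omega)} \le C_P \|\nabla w\|_{L^2(\Omega)}$ for $H^1_0$ functions gives $\|\nabla w\|_{L^2(\Omega)} \le C_P \|\Delta w\|_{L^2(\Omega)}$, and then a second application yields $\|w\|_{L^2(\Omega)} \le C_P^{2}\|\Delta w\|_{L^2(\Omega)}$, producing the first two asserted inequalities.

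The main obstacle I expect is the boundary treatment. The collocation spline $u_s$ only approximately satisfies $u_s = g$ on $\partial\Omega$, so both the hypothesis of Theorem \ref{thm:regularity} and the integration-by-parts identity pick up residual boundary terms. These are quantitatively of order $\epsilon_2$ and hence small, but showing that they are genuinely subordinate to $\|\Delta w\|_{L^2(\Omega)}$ (rather than merely being a separate additive error) requires either a careful harmonic-lifting argument or an \emph{a priori} comparison between $\epsilon_2$ and the interior residual driving $\|u - u_s\|_L$. The other steps (Gao--Lai regularity, Poincar\'e, integration by parts) are essentially routine once the boundary is under control.
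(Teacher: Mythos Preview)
Your approach is correct and matches what the paper relies on. The paper does not actually give a proof of Theorem~\ref{mainthm2ma}; it simply cites \cite{LL21}, and the key ingredient from that reference is precisely the norm equivalence you derive, recorded later in the paper as Lemma~\ref{mjlai03222021}: $A\|w\|_{H^2(\Omega)} \le \|\Delta w\|_{L^2(\Omega)} \le B\|w\|_{H^2(\Omega)}$ for $w \in H^2(\Omega)\cap H^1_0(\Omega)$. Your combination of Theorem~\ref{thm:regularity} for the Hessian bound and Poincar\'e plus integration by parts for the $L^2$ and gradient bounds is exactly how that equivalence is established, so you have reconstructed the intended argument. Your caveat about the boundary discrepancy is well placed: the paper handles it by the stated assumption $\|u-u_s\|\le C\epsilon_2$ on $\partial\Omega$ and, in the sharper Theorem~\ref{mjlai05122021}, by simply assuming $u-u_s|_{\partial\Omega}=0$; a harmonic-lifting correction as you outline is the clean way to make this rigorous.
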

And we can obtain the better convergence results if we assume that $|u-u_s|_{\partial \Omega}=0$, as shown in the following theorem:
\begin{theorem}
	\label{mjlai05122021} Suppose that $|u-u_s|_{\partial \Omega}=0$.  
	Under the assumptions in Theorem~\ref{mainthm2ma},  we have the following inequality
	\begin{align*}
		\|u-u_s\|_{L^2(\Omega)} \le C|\triangle|^2 (\|u-u_s \|_{L}) \hbox{ and }
		\|\nabla (u-u_s)\|_{L^2(\Omega)} \le C|\triangle| (\|u-u_s \|_{L})
	\end{align*}
	for a positive constant $C$, where $|\triangle|$ is the
	size of the underlying tetrahedral $\triangle$. 
\end{theorem}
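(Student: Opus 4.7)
The plan is to apply an Aubin--Nitsche style duality argument that exploits the assumed homogeneous boundary condition $(u-u_s)|_{\partial\Omega}=0$ together with the $H^2$-regularity for the Dirichlet Laplacian on domains of uniformly positive reach (Theorem~\ref{thm:regularity}). Write $e:=u-u_s \in H^1_0(\Omega)$. First I would introduce the dual problem: find $\phi\in H^2(\Omega)\cap H^1_0(\Omega)$ solving $-\Delta\phi=e$ in $\Omega$ with $\phi=0$ on $\partial\Omega$, so that Theorem~\ref{thm:regularity} produces the a priori bound $\|\phi\|_{H^2(\Omega)} \le C\|e\|_{L^2(\Omega)}$.

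Next, performing integration by parts twice (the boundary integrals vanishing because both $e$ and $\phi$ vanish on $\partial\Omega$) yields the duality identity
\[
\|e\|_{L^2(\Omega)}^{2} \;=\; -\int_\Omega e\,\Delta\phi\,dx \;=\; -\int_\Omega \phi\,\Delta e\,dx.
\]
I would then approximate $\phi$ by a spline quasi-interpolant $\phi_I\in\mathcal{S}^r_D(\triangle)$; Lemma~\ref{spline_LS} with $m=1$, $p=2$ gives
\[
\|\phi-\phi_I\|_{L^2(\Omega)} \;\le\; C|\triangle|^{2}|\phi|_{H^{2}(\Omega)} \;\le\; C|\triangle|^{2}\|e\|_{L^2(\Omega)}.
\]
Splitting $\phi=(\phi-\phi_I)+\phi_I$ in the identity above and applying Cauchy--Schwarz to the first piece yields $\bigl|\int_\Omega(\phi-\phi_I)\Delta e\,dx\bigr|\le C|\triangle|^{2}\|e\|_L\,\|e\|_{L^2}$. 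The second piece $\int_\Omega \phi_I\Delta e\,dx$ must be shown to be of higher order, by appealing to the collocation condition $\Delta u_s(\xi_i)\approx \Delta u(\xi_i)$ guaranteed by (\ref{neighbor})--(\ref{min2ma}): since $\Delta e$ is small at a dense family of collocation points, a quadrature-exactness argument absorbs this term into the tolerance $\epsilon_1$ and the higher-order part of the spline approximation error. Dividing by $\|e\|_{L^2}$ then gives $\|e\|_{L^2(\Omega)}\le C|\triangle|^{2}\|e\|_L$.

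For the gradient estimate I would feed the new $L^2$ bound back into the energy identity. The relation $\|\nabla e\|_{L^2(\Omega)}^{2}= -\int_\Omega e\,\Delta e\,dx$ (valid since $e\in H^1_0$), combined with Cauchy--Schwarz and the $L^2$ bound just obtained, yields
\[
\|\nabla e\|_{L^2(\Omega)}^{2} \;\le\; \|e\|_{L^2}\|\Delta e\|_{L^2} \;\le\; C|\triangle|^{2}\|e\|_L\cdot \|e\|_L \;=\; C|\triangle|^{2}\|e\|_L^{2},
\]
so that $\|\nabla e\|_{L^2(\Omega)}\le C|\triangle|\|e\|_L$, which is the claimed rate.

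The principal obstacle I anticipate is the second duality term $\int_\Omega \phi_I\Delta e\,dx$: unlike a Galerkin method, the collocation scheme enforces the PDE residual only pointwise at $\{\xi_i\}$, so no exact test-function orthogonality is available and one must rely on the discrete quadrature accuracy attached to the collocation grid together with the feasibility tolerance $\epsilon_1$ from (\ref{neighbor}) to show that this term contributes only a higher-order correction. Once that quadrature-type estimate is in place, the rest of the argument is routine integration by parts plus the standard $H^2$ regularity for the Dirichlet Poisson problem.
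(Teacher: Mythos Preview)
The paper gives no proof here beyond a pointer to \cite{LL21}, so there is no in-paper argument to compare against; what follows evaluates your proposal on its own terms. Your overall scaffolding is correct: Aubin--Nitsche duality is the natural route, the two integrations by parts are legitimate for $r\ge 1$ since $\nabla u_s$ is continuous across interior faces and both $e$ and $\phi$ vanish on $\partial\Omega$, and your derivation of the gradient bound from the $L^2$ bound via $\|\nabla e\|^2=-\int_\Omega e\,\Delta e\le\|e\|_{L^2}\|\Delta e\|_{L^2}$ is clean and correct.

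The gap is precisely where you locate it, but your proposed fix does not close it. A quadrature-error bound on a tetrahedron $T$ scales like $|T|^{q+1}$ times a high-order seminorm of the integrand $\phi_I\,\Delta e$; since $\phi_I$ is a polynomial on $T$, inverse inequalities force its $k$-th derivatives to blow up like $|T|^{-k}$, which exactly cancels the quadrature gain and leaves no net power of $|\triangle|$. Moreover, the constraint in (\ref{min2ma}) is enforced only up to the tolerance $\epsilon_1$, so the discrete sum $\sum_i w_i\,\phi_I(\xi_i)\,\Delta e(\xi_i)$ does not vanish either; controlling it requires tying $\epsilon_1$ quantitatively to $|\triangle|$ through Lemma~\ref{lem3} and the assumed regularity of $u$, and then showing that the resulting contribution is itself $O(|\triangle|^2\|e\|_L\,\|e\|_{L^2})$. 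That bookkeeping is essentially the entire content of the theorem and is absent from your sketch. The argument in \cite{LL21} presumably handles this term by a mechanism specific to the domain-point collocation grid and the relation $D'>D$; you should consult that reference directly rather than rely on a generic quadrature bound.
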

\begin{proof}
We use the same arguments as in \cite{LL21} to establish a proof. \end{proof}
\section{Our Proposed Algorithms and Their Convergence Analysis}
\label{sec:2}
\subsection{A Spline Based Collocation Method for Monge-Amp\`ere Equation}
\label{sec:2.1}
For convenience, let us explain our spline based collocation method for the 3D \MAE first. 
In the 3D setting, we will solve the following iterative equations as in \cite{A15}
\begin{equation}
	\label{Iteration2D}
	\Delta u_{k+1}=\sqrt[3]{(\Delta u_k )^3+a(f-\text{det} (D^2 u_k))}, \quad k=0, 1, \cdots, . 
\end{equation}
with an initial $u_0$ by solving   
	\begin{equation*}
\Delta u_{0}=\sqrt[3]{27f}
	\end{equation*}
together with the given boundary condition. Let us make two quick remarks. 
The initial $u_0$ so chosen is based on the following 
assumption. Writing $\lambda_i, i=1,2,3,$ to be the eigenvalues of $\text{det} (D^2 u)$, 
the Monge-Amp\'ere equation reads $\lambda_1 \lambda_2 \lambda_3 =f$. 
If these eigenvalues are close to each other, e.g.  all are equal to $\lambda$, we have 
$\lambda^3=f$ and thus $\lambda= \sqrt[3]{f}$. Since $\Delta u=\lambda_1+\lambda_2+\lambda_3$, 
we get $\Delta u=3\lambda=3 \sqrt[3]{f}=\sqrt[3]{27f}.$  However, when these eigenvalues are
quite different, such a choice of initial $u_0$ may not be a good one. We shall explain our 
approach later in this section. 

Next about the parameter $a$ in \eqref{Iteration2D}, we have tested different numbers 
for $a$. Figure \ref{MA:diffa} shows that we can get more accurate results 
when using $a=27.$  We will use $a=27$ in the rest of the paper.  
\begin{figure}[htpb]
	\begin{tabular}{cc}
	\includegraphics[width=.5\linewidth,height=0.5\linewidth]{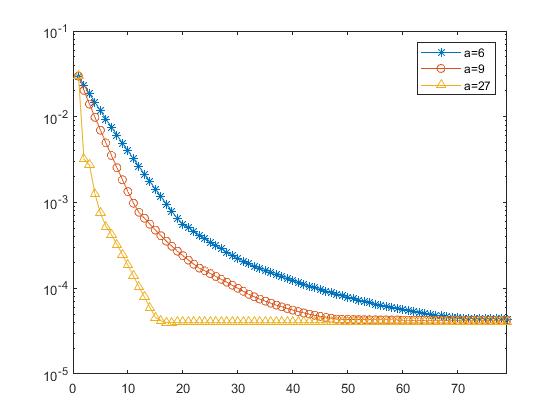}
	&
	\includegraphics[width=.5\linewidth, height=0.5\linewidth]{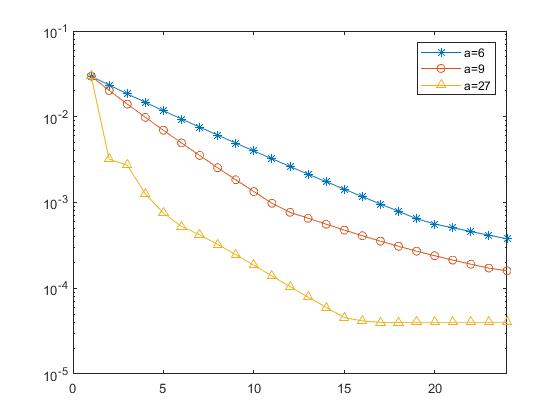}
	\cr
\end{tabular}
	\caption{$\log(\|u-u^{(k)}_s\|_\infty)$($y$-axis) for $u^{3d3}$(left) and $u^{3d5}$(right) for each iteration($x$-axis) with different $a=6, 9, 27$}\label{MA:diffa}
\end{figure}

For simplicity, we use the Dirichlet boundary condition
$u|_{\partial \Omega}= g$ to explain our numerical method.

\subsection{Two Computational Algorithms}
\label{sec:2.2}
We shall present two computational algorithms for numerical solution of the \MAE. 
The first one is a standard approach which 
has been used by many researchers  based on finite different discretization and 
finite element discretization in the literature. 
We shall use multivariate spline functions to discretize the
function space $H^2(\Omega)$ to demonstrate the efficiency and effectiveness of our
multivariate spline approach as well as show that our numerical results are better than many 
methods in the literature.  See our computational results in the next section. 

\begin{algorithm}\caption{An Iterative Poisson Equation Algorithm}
	\label{alg1.1}
	Start with an initial solution $u_0$  by solving the following Poisson equation 
	using our collocation method discussed in the previous section:  
	\begin{equation}
		\label{initialguess}
		\Delta u_{0}=\sqrt[3]{27f}
	\end{equation}
	and $u_0= g$ on the boundary $\partial \Omega$.  
	We then iteratively solve the Poisson equation 
	\begin{equation}
		\Delta u_{k+1}=\sqrt[3]{(\Delta u_k )^3+27(f-\text{det} (D^2 u_k))}, \quad  ~k=0, 1, \cdots.  
	\end{equation}
	That is, we find $u_{k+1}\in S^r_D(\triangle)$ satisfying the following equations approximately: 
	\begin{equation}
		\label{PDE2ma}
		\begin{cases} \Delta u_{k+1}(\xi_i)  & = \sqrt[3]{(\Delta u_k(\xi_i) )^3+27(f(\xi_i)-det (D^2 u_k(\xi_i))) }
			\quad \xi_i\in \Omega \subset \mathbb{R}^3, \cr 
			u_{k+1} (\xi_i) & = g(\xi_i),  \quad \xi_i\in \partial \Omega 
		\end{cases}
	\end{equation}
	In other words, we numerically solve (\ref{min2ma}) with 
		\begin{equation*}
	{\bf f}_k= \sqrt[3]{(\Delta u_k(\xi_i) )^3+27(f(\xi_i)-det (D^2 u_k(\xi_i)))}
		\end{equation*}
	by using the iterative algorithm in \cite{LL21}.  \par 
	
	Terminate the iteration when
	$\|f-\text{det} (D^2 u_{k+1})\|_{l_\infty}>\|f-\text{det} (D^2 u_k)\|_{l_\infty}$.
\end{algorithm}

Next we explain an averaged iterative algorithm. 
Assume $\Omega$ is bounded and the closure of $\Omega$ is of uniformly positive reach as explained in the previous section. 
For any $f\in L^2(\Omega),$ the solution of the Poisson equation with zero boundary condition 
is in $H^2(\Omega)$ by Theorem~\ref{thm:regularity}.  
Furthermore,  the solution of the Poisson equation with boundary condition $g$ is in $H^2(\Omega)$ if $g\in H^{1/2}(\partial \Omega)$. Indeed, we consider a function $v\in H^2(\Omega)$ 
whose trace on $\partial \Omega$ is $g\in H^{1/2}(\partial \Omega)$. 
Define $w=u-v$ and we have
\begin{align*}
	\int_\Omega \nabla w \cdot \nabla \phi &=\int_\Omega \nabla u \cdot \nabla \phi -\int_\Omega \nabla v \cdot \nabla \phi \cr
	&=\int_\Omega -f \cdot  \phi -\int_\Omega \Delta v \cdot  \phi =\int_\Omega  (\Delta v-f )\cdot  \phi.
\end{align*}
for every $\phi\in H^1_0(\Omega)$. Then the solution $w$ satisfying the weak formulation of 
	\begin{equation*}
		 \Delta w=f-\Delta v~~\text{in} ~~\Omega, w=0~~\text{on} ~~\partial \Omega	\end{equation*}
is in $H^2(\Omega)$ (cf. \cite{GL20}). Therefore, $u=w+v$ is in $H^2(\Omega)$. 

Let $T$ be an operator which maps $H^2(\Omega) \to H^2(\Omega)$ in the following sense: 
for any $v\in H^2(\Omega)$, let $u=T(v)$ be the solution of the Poisson equation:
	\begin{equation*}
		\Delta u=\sqrt[3]{(\Delta v)^3+ 27(f- \det(D^2 v))} \hbox{ over } \Omega	\end{equation*}
and $u|_{\partial \Omega}=g$ with $g\in H^{1/2}(\partial \Omega)$.  In other words, 
the operator $T$ on $H^2(\Omega)$ is defined by
	\begin{equation*}
		T(u)=\Delta^{-1} [\sqrt[3]{(\Delta u )^3+27(f-\text{det} (D^2 u))}].	\end{equation*}
It is easy to see that $T$ is a nonlinear operator $T$ maps $H^2(\Omega)$ to $H^2(\Omega)$. 
Also, we can see that the exact
solution $u^*$ satisfying $\det(D^2 u^*)=f$ is a fixed point of $T$.  

Now we are ready to define an averaged iterative algorithm. 
In this way, we can find more accurate solutions than the one using Algorithm~\ref{alg1.1} only. 

\begin{algorithm}
	\caption{The Averaged Iterative Algorithm}
	\label{alg2}
	Start with an initial $u_0$, where $\Delta u_{0}=\sqrt[3]{27f}$ over $\Omega$ 
	and $u_0=g$ on $\partial \Omega$. \par
	
	We iteratively solve the Poisson equations  
	\begin{equation}
		\Delta u_{k+\frac{1}{2}}=\sqrt[3]{(\Delta u_k )^3+27(f-\text{det} (D^2 u_k))}, \quad   
	\end{equation}
	together with the boundary condition $u_{k+\frac{1}{2}}=g$ on $\partial \Omega$ 
	by using the minimization in (\ref{min2ma}) and then take
	\begin{equation}
		\label{average}
		u_{k+1}=\frac{1}{2}u_{k+\frac{1}{2}}+  \frac{1}{2}u_{k}.
	\end{equation}
	
	Stop the iteration if $\|f-\text{det} (D^2 u_{k+1})\|_{l_\infty}>\|f-\text{det} (D^2 u_k)\|_{l_\infty}$.
\end{algorithm}

Let us present some performance of these two algorithms to show that Algorithm~\ref{alg2} indeed very useful. Consider a testing function $u^{3ds1}$ as in Section \ref{sec:3}, 
the eigenvalues of the Hessian matrix $D^2 u^{3d1}$ are $1, 5, 15$. 
Although these three eigenvalues are not close  to any real positive number,  
we use various positive numbers $p$ for the right-hand side of the Poisson equation $\Delta u_0=p$ to solve $u_0$ as an initial 
solution and then apply Algorithm~\ref{alg1.1} and Algorithm~\ref{alg2}.  
In Table \ref{tableaaalgo}, the results from both Algorithms are shown after the same number
of iterations.   We can see that Algorithm~\ref{alg2} produces more accurate solution 
than Algorithm~\ref{alg1.1} from various initial values except for $p$ 
which is close to $21=\Delta u^{3ds1}$, i.e. $p\in [17.7, 26]$. 
Also,  the $\ell_2$ and $h_1$ errors from Algorithm~\ref{alg2} 
are better than the errors from Algorithm 1 for testing functions $u^{3ds3}$ and $u^{3ds8}$ as shown in  Figure \ref{figureaaalgo}.

\begin{table}[htpb]
	\centering
	\caption{Errors of numerical solutions $u^{3ds1}$ for  the Monge Amp\`ere  equation over $[0,1]^3$ with $D=9$, $r=1$ over the same tetrahedralization for various initial values $p$ by two algorithms}\label{tableaaalgo}
	\begin{tabular}{ c c  c c c } 
\toprule
\multirow{2}{1.4cm}{$\Delta u_0=p$}	&\multicolumn{2}{c}{Algorithm 1} &	\multicolumn{2}{c}{Algorithm 2} \\
\cmidrule(r{4pt}){2-3} \cmidrule(l){4-5}
			&$|e_s|_{l_2}$&$|e_s|_{h_1}$&$|e_s|_{l_2}$&$|e_s|_{h_1}$  \\
	\hline
		12.6 &  2.1291e-02 &  1.6315e-01 &  1.9230e-02 &  1.3670e-01\cr
		15.1 &  3.4135e-03 &  5.8902e-02 &  5.0004e-03 &  5.1503e-02\cr
		16.4 &  2.0124e-03 &  3.1978e-02 &  1.2081e-08 &  5.4356e-07\cr
		17.1 &  7.9130e-04 &  1.4517e-02 &  4.1401e-08 &  1.6448e-06\cr
		17.7 & 1.3980e-09 &  3.9929e-08 &  3.6103e-08 & 2.7446e-06\cr
		26.0 &  6.4074e-09  & 2.4003e-07&   4.8324e-07 &  1.8097e-05\cr
		26.5 &  2.0899e-04 &  6.3176e-03 &  5.0149e-07 &  1.8781e-05\cr
		27.0 &  5.6423e-04 &  1.7172e-02  & 3.9592e-04  & 1.2049e-02\cr
		27.5 &  8.9037e-04  & 2.5381e-02 &  7.0701e-04  & 2.0696e-02\cr
		\hline
	\end{tabular}
\end{table}

\begin{figure}[htpb]
	\includegraphics[width=0.85\linewidth, height=0.3\linewidth]{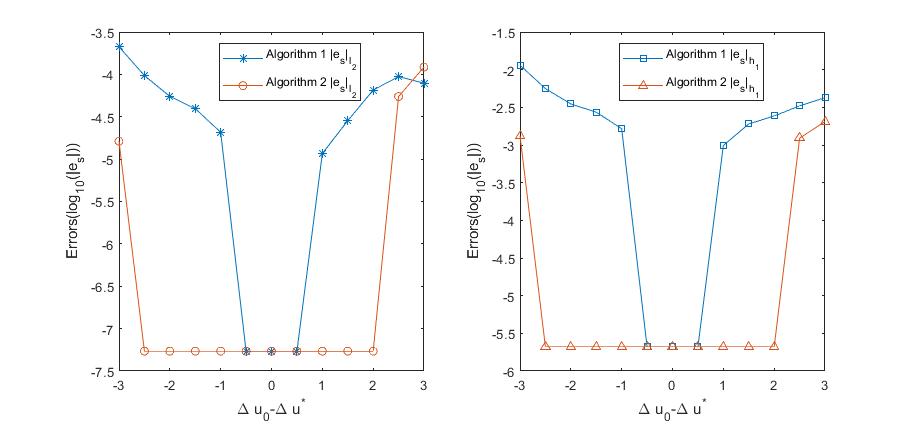}
	\centering
	\includegraphics[width=0.85\linewidth, height=0.3\linewidth]{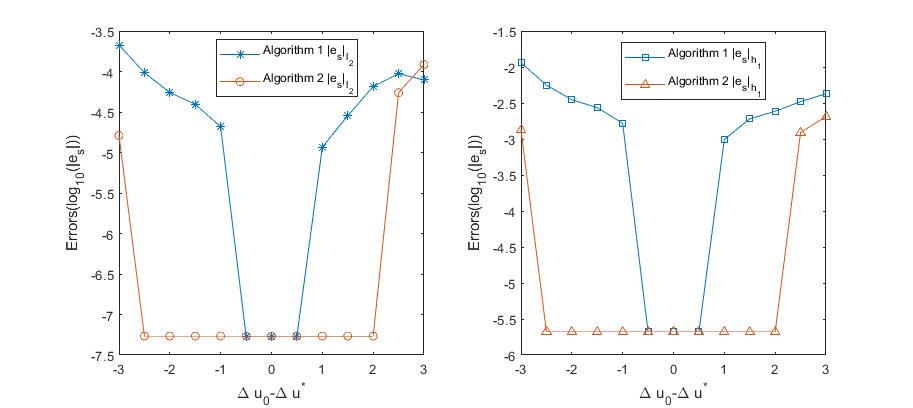}
	\caption{Errors $\log(|e_s|_{l_2}), \log(|e_s|_{h_1})$ for $u^{3ds3}$(Top) and $u^{3ds8}$(Bottom) }
\label{figureaaalgo}
\end{figure}

\subsection{Convergence Analysis}
\label{sec:2.3}
 According to \cite{A14}, it is known
that if $\text{det} (D^2 u^*)=f>0$ and $u^*$ is convex, then there exist constants $m, M>0$, independent of the mesh size $|\triangle|$ such that 
$$0<m \le \lambda_3\le \lambda_2\le \lambda_1 \le M, $$
where $\lambda_1,\lambda_2,\lambda_3$ are the eigenvalues of 
$ (D^2 u(x)), \forall x\in \Omega.$
The following result is also known (cf. \cite{A15}). For clarity, we provide a proof below. 
\begin{lemma}
	\label{eigen1}
	Suppose that the convex solution $u^*\in W^{2,\infty}$ satisfies $\text{det} (D^2 u^*)
=f>0$. There exists a $\delta>0$ such that  for any $u$ which is close 
	enough to the exact solution $u^*$ in the sense that $|u-u^*|_{2,\infty}\le \delta$, we have 
		\begin{equation*}
			\text{det} (D^2 u)\le \frac{1}{27}(\Delta u)^3< \frac{1}{a}(\Delta u)^3	\end{equation*}
	for any $a<27$.
\end{lemma}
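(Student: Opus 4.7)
The plan is to reduce everything to an AM-GM inequality applied pointwise to the eigenvalues of the Hessian and to show that those eigenvalues remain strictly positive whenever $u$ stays in a small $W^{2,\infty}$ neighborhood of $u^\star$.

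First I would fix an arbitrary point $\bfx\in\Omega$ and denote by $\lambda_1(\bfx)\ge \lambda_2(\bfx)\ge \lambda_3(\bfx)$ the eigenvalues of $D^2 u^\star(\bfx)$, and by $\mu_1(\bfx)\ge \mu_2(\bfx)\ge \mu_3(\bfx)$ the eigenvalues of $D^2 u(\bfx)$. The remarks recalled just before the lemma give a uniform lower bound $\lambda_i(\bfx)\ge m>0$ for $i=1,2,3$ and all $\bfx\in\Omega$, coming from strict convexity of $u^\star$ together with $\det(D^2 u^\star)=f>0$ bounded below. Next, since the eigenvalues of a symmetric matrix are Lipschitz with respect to the spectral (or Frobenius) norm of the matrix by Weyl's inequality, I have $|\mu_i(\bfx)-\lambda_i(\bfx)|\le \|D^2 u(\bfx)-D^2 u^\star(\bfx)\|\le C|u-u^\star|_{2,\infty}$ for a dimensional constant $C$. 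Choosing $\delta>0$ small enough (specifically $C\delta<m$), this forces $\mu_i(\bfx)\ge m-C\delta>0$ for every $\bfx\in\Omega$ and every $i$, so $D^2 u$ is pointwise positive definite and in particular $\Delta u(\bfx)>0$.

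Then I would invoke the classical AM-GM inequality on the positive triple $\mu_1(\bfx), \mu_2(\bfx), \mu_3(\bfx)$:
\begin{equation*}
\det(D^2 u(\bfx))=\mu_1\mu_2\mu_3\;\le\;\left(\frac{\mu_1+\mu_2+\mu_3}{3}\right)^{3}=\frac{1}{27}(\Delta u(\bfx))^3.
\end{equation*}
This proves the first inequality. The second inequality is automatic: for any $a<27$ we have $1/27<1/a$, and because $\Delta u(\bfx)>0$ from the previous paragraph, $(\Delta u(\bfx))^3>0$, so $\tfrac{1}{27}(\Delta u(\bfx))^3<\tfrac{1}{a}(\Delta u(\bfx))^3$.

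The only mildly delicate step is the uniform positivity claim $\mu_i(\bfx)\ge m-C\delta>0$; it relies on the fact that the lower bound $m$ for the eigenvalues of $D^2 u^\star$ is independent of $\bfx\in\Omega$, which in turn uses both $f\ge f_{\min}>0$ and the uniform upper bound $M$ on $\lambda_1$ that was recalled at the start of the subsection (since $\lambda_3\ge f_{\min}/(\lambda_1\lambda_2)\ge f_{\min}/M^2$). Once $\delta$ is chosen so that $C\delta<f_{\min}/M^2$, the whole argument is a one-line AM-GM estimate; no smoothness or boundary considerations enter the proof.
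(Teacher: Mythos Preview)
Your proof is correct and follows essentially the same route as the paper: first use a perturbation result for eigenvalues of symmetric matrices (you invoke Weyl's inequality; the paper cites Ostrowski's continuity result) to guarantee that all eigenvalues of $D^2u$ stay strictly positive in a $W^{2,\infty}$ neighborhood of $u^\star$, and then apply AM--GM to the eigenvalues. The paper writes out the AM--GM step as an explicit algebraic expansion of $(\lambda_1+\lambda_2+\lambda_3)^3$, whereas you invoke the inequality directly, and you also spell out the strict inequality $\tfrac{1}{27}(\Delta u)^3<\tfrac{1}{a}(\Delta u)^3$ using $\Delta u>0$, which the paper leaves implicit; these are purely cosmetic differences.
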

\begin{proof}
Recall that the eigenvalues of a symmetric matrix are continuous functions of its entries, as 
roots of the characteristic equation (cf. Ostrowski (1960) Appendix K \cite{O60}). Thus, for a 
given $u^* \in W^{2,\infty}(\Omega),$ there exists $\delta>0$ such that for $u\in 
W^{2,\infty}(\Omega), |u-u^*|_{2,\infty}\le 
\delta$ implies $M\geq \lambda_1(D^2 u(x))\geq \lambda_2(D^2 u(x))\geq \lambda_3(D^2 u(x))>0.$
Now, we use the property that  $\text{det} (D^2 u)$ is the multiplication of all eigenvalues 
to have
	\begin{align*}
	&	\text{det} (D^2 u)=\lambda_1 \lambda_2 \lambda_3=\frac{1}{27}(3 \lambda_1 \lambda_2 \lambda_3+6\lambda_1 \lambda_2 \lambda_3+18\lambda_1 \lambda_2 \lambda_3)\\
		&\le \frac{1}{27}(\lambda_1^3+\lambda_2^3+\lambda_3^3+6\lambda_1 \lambda_2 \lambda_3 +3 \lambda_3 (\lambda_1^2+ \lambda_2^2)+3 \lambda_1 (\lambda_2^2+ \lambda_3^2)+3 \lambda_2 (\lambda_1^2+ \lambda_3^2))\\
		&=\frac{1}{27}(\lambda_1+\lambda_2+\lambda_3)^3=\frac{1}{27}(\Delta u)^3.
	\end{align*}
This completes all the proof.\qed
\end{proof}


We first consider the point-wise convergence of the sequence from Algorithm~\ref{alg1.1}.
\begin{theorem}
	\label{mainresult}
Fix a spline space $\mathcal{S}^r_D(\triangle)$ with $\triangle$ being a tetrahedralization of the domain
$\Omega$. 
Let  $u_k\in \mathcal{S}^r_D(\triangle), k\ge 1$ be the sequence from Algorithm 1. 
	Then, any average values of $f-\det (D^2 u_k), k\ge 1$ are nonnegative  in the following senses:
	\begin{equation}
		\label{mjlai03082022}
		\frac{1}{n+1}\sum_{k=0}^{n}(f (\bfx)-\det (D^2 u_k)(\bfx)) \ge 0, ~~ \bfx\in \Omega
	\end{equation} 
for all $n\ge 1$.  Furthermore,  suppose that there eixsts 
a bound $M>0$ such that $|u_k(\bfx)|\le M$ over $\Omega$ for all $k\ge 0$. Then 
\begin{equation}
		\label{mjlai03092022}
		\frac{1}{n+1}\sum_{k=0}^{n}(f(\bfx)-\det (D^2 u_k)(\bfx)) \to 0 
\end{equation}
when $n  \to \infty$ for all $\bfx\in \Omega$.
\end{theorem}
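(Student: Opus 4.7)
The proof hinges on noticing that the iteration in Algorithm~\ref{alg1.1} is telescoping once one cubes both sides. From the defining equation
$$\Delta u_{k+1}=\sqrt[3]{(\Delta u_k )^3+27(f-\det (D^2 u_k))},$$
one immediately obtains $27(f - \det(D^2 u_k)) = (\Delta u_{k+1})^3 - (\Delta u_k)^3$. The plan is to sum this identity from $k=0$ up to $k=n-1$ and exploit the special initial choice $(\Delta u_0)^3 = 27f$ so that the telescoping yields
$$\sum_{k=0}^{n-1} \bigl(f-\det(D^2 u_k)\bigr) \;=\; \frac{(\Delta u_n)^3}{27} - f.$$
Adding the missing term $(f-\det(D^2 u_n))$ to both sides then gives the key identity
$$\sum_{k=0}^{n}\bigl(f-\det(D^2 u_k)\bigr) \;=\; \frac{(\Delta u_n)^3}{27} - \det(D^2 u_n).$$

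For the first assertion (\ref{mjlai03082022}), I would observe that the right-hand side of this identity is exactly the quantity shown to be nonnegative in the proof of Lemma~\ref{eigen1}: by the AM-GM inequality applied to the three eigenvalues of $D^2 u_n$ (assumed positive, i.e., the iterate is convex, which is the standing working hypothesis in the analysis of this method),
$$\det(D^2 u_n)=\lambda_1\lambda_2\lambda_3 \le \frac{(\lambda_1+\lambda_2+\lambda_3)^3}{27}=\frac{(\Delta u_n)^3}{27}.$$
Dividing by $n+1$ then yields (\ref{mjlai03082022}). The main delicacy here is that Lemma~\ref{eigen1} is stated for functions close to $u^*$, so I would either appeal to it as an application of AM-GM for any iterate with positive Hessian eigenvalues, or phrase the conclusion under the assumption that the spline iterates remain convex — the same assumption under which the whole scheme is posed.

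For the second assertion (\ref{mjlai03092022}), I would use the uniform bound $|u_k(\bfx)|\le M$ together with the Markov inequality stated earlier in Section~\ref{sec:1.1}. Applying the Markov inequality twice on the fixed finite-dimensional spline space $\mathcal{S}^r_D(\triangle)$ yields
$$\|\Delta u_n\|_{\infty,\Omega}\le \frac{C_1 M}{|\triangle|^2},\qquad \|D^2 u_n\|_{\infty,\Omega}\le \frac{C_2 M}{|\triangle|^2},$$
so both $(\Delta u_n)^3/27$ and $\det(D^2 u_n)$ are bounded pointwise by a constant $C(M,\triangle)$ independent of $n$. Dividing the identity above by $n+1$ then produces a bound of order $C(M,\triangle)/(n+1)$, which drives the left-hand side to zero pointwise as $n\to\infty$.

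The main obstacle I anticipate is not the telescoping computation, which is elementary, but the implicit assumption that each spline iterate is convex so that Lemma~\ref{eigen1} (i.e., AM-GM on the eigenvalues of $D^2 u_n$) applies. The paper already flags the preservation of convexity during iterations as a central difficulty, so the proof should either make this hypothesis explicit or invoke the eigenvalue monitoring procedure the authors describe. A secondary point worth commenting on is that the iterates $u_{k+1}$ solve the Poisson step only up to the collocation tolerance $\epsilon_1$ of (\ref{neighbor}); I would either work with the exact telescoping relation or carry an additive $O(\epsilon_1)$ defect through and absorb it into the error at the end.
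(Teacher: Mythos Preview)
Your proposal is correct and follows essentially the same route as the paper: both arguments cube the iteration to obtain the telescoping identity $(\Delta u_{k+1})^3-(\Delta u_0)^3=27\sum_{j=0}^k(f-\det(D^2 u_j))$, invoke Lemma~\ref{eigen1} (AM-GM on the eigenvalues of the current iterate) for the nonnegativity, and then apply the Markov inequality to bound $\|\Delta u_n\|_{\infty,\Omega}$ and divide by $n+1$ for the convergence. The concerns you raise about the implicit convexity hypothesis on $u_n$ and the exact-versus-collocation Poisson solve are legitimate, and the paper's own proof makes the same tacit assumptions without further comment.
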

We remark that the condition that $|u_k(\bfx)| \le M$ above is a computational condition one can check during the iterative computation of Algorithm 1. 
Our numerical experiments show that for some testing functions $u$, this condition does satisfy while for other testing functions, the condition does not satisfy. 
See Figure~\ref{averageA} for these numerical phenomena.  
\begin{proof}
By \eqref{PDE2ma} and Lemma \ref{eigen1}, we get 
	\begin{align*}
		27\text{det} (D^2 u_{k+1}))\le(\Delta u_{k+1})^3&=(\Delta u_{k} )^3
+27(f-\text{det} (D^2 u_{k}))\cr
		&=(\Delta u_{k-1} )^3+27(f-\text{det} (D^2 u_{k-1}))+27(f-\text{det} (D^2 u_{k}))\\
		&=(\Delta u_{k-1} )^3+2\cdot 27 f-27\text{det} (D^2 u_{k-1}) - 27\text{det} (D^2 u_{k})\\
		&= \cdots \cr 
		&=(\Delta u_{0} )^3+27(k+1)f-27\sum_{j=0}^k\text{det} (D^2 u_{j})\cr
		&=27f+27(k+1)f-27\sum_{j=0}^k\text{det} (D^2 u_{j}).
	\end{align*}
	Hence, we have
	\begin{align*}
		0 \le 27f+27(k+1)f-27\sum_{j=0}^k\text{det} (D^2 u_{j})-27\text{det} (D^2 u_{k+1}))
		=27\sum_{j=0}^{k+1}(f-\text{det} (D^2 u_{j})).
	\end{align*}
	which leads to (\ref{mjlai03082022}). 
In addition,  we also have 
$$(\Delta u_{k+1})^3-(\Delta u_0)^3=27\sum_{j=0}^k (f-\text{det} (D^2 u_{j})). $$ 
By the assumption of this theorem, $u_{k+1}$ has a bound, i.e.  $\|u_{k+1}\|_{\infty, \Omega}\le M$.  
Then we can use the Markov inequality to have
\begin{equation}
\|\Delta u_{k+1}\|_{\infty,\Omega} \le \frac{C}{|\triangle|^2}\|u_{k+1}\|_{\infty,\Omega}\le  \frac{CM}{|\triangle|^2}<\infty 
\end{equation}
for a constant $C>0$ independent of $u_{k+1}$. 	It thus follows  
	$$\dfrac{27\sum_{j=0}^k (f-\text{det} (D^2 u_{j}))}{k+1}=\dfrac{(\Delta u_{k+1})^3-(\Delta u_0)^3}{k+1}\rightarrow 0.$$
Therefore, we finished a proof of Theorem~\ref{mainresult}.\qed 
\end{proof}

Furthermore, we denote $w(u,f):=\sqrt[3]{(\Delta u )^3+27(f-\text{det} (D^2 u))}$. 
We have
\begin{align*}
	\|\Delta u_{k+1}-\Delta u \|_{L^2(\Omega)}&=\| \dfrac{(\Delta u_{k} )^3+27(f-\text{det} 
		(D^2 u_{k}))-(\Delta u)^3}{(w(u_k,f))^2+w(u_k,f)w(u,f)+(w(u,f))^2}\|_{L^2(\Omega)}\cr
	&=\| \dfrac{(\Delta u_{k} )^3-(\Delta u)^3+27(\text{det} (D^2 u)-\text{det} (D^2 u_{k}))}{(w(u_k,f))^2+w(u_k,f)w(u,f)+(w(u,f))^2}\|_{L^2(\Omega)}
\end{align*}
By simple calculations, we get
\begin{equation*}
	(\Delta u_k)^3-(\Delta u)^3=(\Delta u_k-\Delta u)((\Delta u_k)^2+\Delta u_k \cdot \Delta u +(\Delta u)^2)
\end{equation*}	
and by Lemmas 2.1, 2.2 and 2.3 in \cite{A14}
\begin{equation*}
	\text{det} (D^2 u)-\text{det} (D^2 u_{k})=\text{cof}((1-t)D^2 u_k+tD^2u):(D^2u_k-D^2u)
\end{equation*}
for some $t\in [0,1].$ By simple calculation and Lemma \ref{mjlai03222021}, we have
\begin{align*}
	&\| \dfrac{(\Delta u_{k} )^3-(\Delta u)^3}{(w(u_k,f))^2+w(u_k,f)w(u,f)
		+(w(u,f))^2}\|_{L^2(\Omega)}\cr
	&\le \| \dfrac{(\Delta u_k-\Delta u)((\Delta u_k)^2+\Delta u_k \cdot \Delta u +(
		\Delta u)^2)}{(w(u_k,f))^2+w(u_k,f)w(u,f)+(w(u,f))^2}\|_{L^2(\Omega)}\cr
	&\le \| \dfrac{(\Delta u_k)^2+\Delta u_k \cdot \Delta u 
		+(\Delta u)^2}{(w(u_k,f))^2+w(u_k,f)w(u,f)+(w(u,f))^2}\|_{L^\infty(\Omega)} 
	\|\Delta u_k-\Delta u\|_{L^2(\Omega)}.
\end{align*}
Let $M=(M_{ij}), N=(N_{ij})$ be matrix fields and $c$ be a real number. Then we get 
\begin{align}
	\label{inmatrix}
	\dfrac{M:N}{c}&= \frac{1}{c}\sum_{i,j=1}^3 M_{ij} N_{ij}= \sum_{i,j=1}^3\frac{M_{ij}}{c}  
	N_{ij}\cr
	&\le \|\frac{M}{c}\|_\infty\sum_{i,j=1}^3 | N_{ij}|\le \|\frac{M}{c}\|_\infty \Big{(}\sum_{i,j=1}^3  
	N_{ij}^2\Big{)}^{1/2} \cdot 3, 
\end{align}
where $\|\frac{M}{c}\|_\infty=\max_{1\le i\le 3}  \sum_{j=1}^3|\frac{M_{ij}}{c}|. $ 
By \eqref{inmatrix} with , we have
\begin{align*}
	&\| \dfrac{27(\text{det} (D^2 u)-\text{det} (D^2 
		u_{k}))}{(w(u_k,f))^2+w(u_k,f)w(u,f)+(w(u,f))^2}\|_{L^2(\Omega)}\cr
	&\le 27 \| \dfrac{(\text{cof}((1-t)D^2 
		u_k+tD^2u):(D^2u_k-D^2u))}{(w(u_k,f))^2+w(u_k,f)w(u,f)+(w(u,f))^2}\|_{L^2(\Omega)}\cr
	&=27\Big{[}\int \Big{(}\dfrac{(\text{cof}((1-t)D^2 
		u_k+tD^2u):(D^2u_k-D^2u))}{(w(u_k,f))^2+w(u_k,f)w(u,f)+(w(u,f))^2}\Big{)}^2\Big{]}^{\frac{1}{2}}\cr
	&= 81\| \dfrac{\text{cof}((1-t)D^2 
		u_k+tD^2u)}{(w(u_k,f))^2+w(u_k,f)w(u,f)+(w(u,f))^2}\|_\infty |u_k-u|_{H^2(\Omega)} \cr
	&\le 81\| \dfrac{(1-t)D^2 u_k+tD^2u}{(w(u_k,f))^2+w(u_k,f)w(u,f)+(w(u,f))^2}
	\|^2_\infty |u_k-u|_{H^2(\Omega)}\cr
	&\le 81\| \dfrac{(1-t)D^2 u_k+tD^2u}{(w(u_k,f))^2+w(u_k,f)w(u,f)+(w(u,f))^2}
	\|^2_\infty \|u_k-u\|_{H^2(\Omega)}
\end{align*}
for some $t\in [0,1]$. 	By these two equations, we can have 
\begin{align*}
	\|\Delta u_{k+1}-\Delta u \|_{L^2(\Omega)}&=\| \dfrac{(\Delta u_{k} )^3-(\Delta u)^3+
		27(\text{det} (D^2 u)-\text{det} (D^2 u_{k}))}{(w(u_k,f))^2+w(u_k,f)w(u,f)+(w(u,f))^2}
	\|_{L^2(\Omega)}\cr
	&\le \| \dfrac{(\Delta u_k-\Delta u)((\Delta u_k)^2+\Delta u_k \cdot \Delta u +
		(\Delta u)^2)}{(w(u_k,f))^2+w(u_k,f)w(u,f)+(w(u,f))^2}\|_{L^2(\Omega)}\cr
	&+\| \dfrac{27(\text{det} (D^2 u)-\text{det} (D^2 u_{k}))}{(w(u_k,f))^2+w(u_k,f)w(u,f)+
		(w(u,f))^2}\|_{L^2(\Omega)}\cr
	&\le \| \dfrac{(\Delta u_k)^2+\Delta u_k \cdot \Delta u +(\Delta 
		u)^2}{(w(u_k,f))^2+w(u_k,f)w(u,f)+(w(u,f))^2}\|_{L^\infty(\Omega)} \|\Delta u_k-\Delta u\|_{L^2(
		\Omega)}\cr
	&+81 \| \dfrac{(1-t)D^2 u_k+tD^2u}{(w(u_k,f))^2+w(u_k,f)w(u,f)+(w(u,f))^2}\|^2_\infty \|u_k-u\|_{H^2(\Omega)}
\end{align*}
for some $t\in [0,1]$.
	Now, we need  the following lemma from  \cite{LL21} 
to prove one of the main convergence results in this paper. 
\begin{lemma}
	\label{mjlai03222021}
	Suppose that $\Omega$ is bounded and has  uniformly positive reach $r_\Omega>0.$ 
	Then there exist two positive constants $A$ and $B$ such that 
	\begin{equation}
		\label{equivalent4}
		A \|u \|_{H^2(\Omega)} \le \|\Delta u \|_{L^2(\Omega)} \le B \| u\|_{H^2(\Omega)}, \quad 
		\forall u\in H^2(\Omega)\cap H^1_0(\Omega).
	\end{equation}
\end{lemma}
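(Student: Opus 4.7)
The plan is to prove the two inequalities separately, with the upper bound being immediate and the lower bound being the substantive one that rests entirely on the regularity estimate already stated as Theorem~\ref{thm:regularity}. For the upper bound, I would write $\Delta u = \sum_{i=1}^n \partial_{x_i}^2 u$ and apply the triangle inequality in $L^2(\Omega)$ together with the trivial pointwise bound $|\partial_{x_i}^2 u| \le (\sum_{i,j} |\partial_{x_i}\partial_{x_j} u|^2)^{1/2}$, which immediately gives $\|\Delta u\|_{L^2(\Omega)} \le B \|u\|_{H^2(\Omega)}$ for some constant $B=B(n)$ that does not even depend on $\Omega$.

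For the lower bound, given any $u \in H^2(\Omega)\cap H^1_0(\Omega)$, I would set $f := -\Delta u \in L^2(\Omega)$. Since $u$ is then by construction the unique weak solution of $-\Delta u = f$ with $u|_{\partial\Omega}=0$, Theorem~\ref{thm:regularity} applies and yields
\begin{equation*}
\sum_{i,j=1}^n \Bigl\|\tfrac{\partial^2 u}{\partial x_i \partial x_j}\Bigr\|_{L^2(\Omega)}^2 \;\le\; C_0 \|f\|_{L^2(\Omega)}^2 \;=\; C_0 \|\Delta u\|_{L^2(\Omega)}^2,
\end{equation*}
with $C_0$ depending only on the uniform positive reach $r_\Omega$. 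This already controls the second-order part of $\|u\|_{H^2(\Omega)}^2$ by $\|\Delta u\|_{L^2(\Omega)}^2$.

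To complete the $H^2$ norm, I still need to bound $\|u\|_{L^2(\Omega)}$ and $\|\nabla u\|_{L^2(\Omega)}$ by $\|\Delta u\|_{L^2(\Omega)}$. Since $\Omega$ is bounded and $u \in H^1_0(\Omega)$, the Poincar\'e inequality gives $\|u\|_{L^2(\Omega)} \le C_P \|\nabla u\|_{L^2(\Omega)}$. Next, integration by parts together with the zero boundary trace yields
\begin{equation*}
\|\nabla u\|_{L^2(\Omega)}^2 \;=\; -\int_\Omega u\,\Delta u \;\le\; \|u\|_{L^2(\Omega)}\,\|\Delta u\|_{L^2(\Omega)} \;\le\; C_P\,\|\nabla u\|_{L^2(\Omega)}\,\|\Delta u\|_{L^2(\Omega)},
\end{equation*}
so $\|\nabla u\|_{L^2(\Omega)} \le C_P \|\Delta u\|_{L^2(\Omega)}$ and consequently $\|u\|_{L^2(\Omega)} \le C_P^2 \|\Delta u\|_{L^2(\Omega)}$. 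Adding these three bounds, one obtains $\|u\|_{H^2(\Omega)}^2 \le (C_0 + C_P^2 + C_P^4)\|\Delta u\|_{L^2(\Omega)}^2$, which after taking $A := (C_0 + C_P^2 + C_P^4)^{-1/2}$ delivers the claimed lower bound.

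The main obstacle is purely the second-order estimate, and that obstacle has been outsourced to Theorem~\ref{thm:regularity}, which is where the positive-reach hypothesis on $\Omega$ enters the argument. Everything else is a standard application of Poincar\'e and integration by parts, so once one accepts the cited regularity theorem the lemma follows in a few lines.
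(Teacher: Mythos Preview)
Your argument is correct. Note, however, that the paper does not actually supply its own proof of this lemma: it is quoted verbatim from \cite{LL21} and used as a black box. Your derivation is precisely the natural one---the upper bound is a pointwise Cauchy--Schwarz triviality, and the lower bound is obtained by feeding $f=-\Delta u$ into the quoted $H^2$ regularity result (Theorem~\ref{thm:regularity}) to control the Hessian, and then handling the zeroth- and first-order pieces of the $H^2$ norm via Poincar\'e and the identity $\|\nabla u\|_{L^2}^2=-\int_\Omega u\,\Delta u$. This is almost certainly the same route taken in \cite{LL21}, since Theorem~\ref{thm:regularity} is the only place where the uniformly-positive-reach hypothesis can enter, and the remaining steps are forced.
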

By Lemma \ref{mjlai03222021}, we have
\begin{align*}
	\|\Delta u_{k+1}-\Delta u \|_{L^2(\Omega)}	&\le \| \dfrac{(\Delta u_k)^2+\Delta u_k \cdot \Delta u 
		+(\Delta u)^2}{(w(u_k,f))^2+w(u_k,f)w(u,f)+(w(u,f))^2}\|_{L^\infty(\Omega)} 
\|\Delta u_k-\Delta u\|_{L^2(
	\Omega)}\cr
	&+81\| \dfrac{(1-t)D^2 u_k+tD^2u}{(w(u_k,f))^2+w(u_k,f)w(u,f)+(w(u,f))^2}
	\|^2_\infty \frac{1}{A} \|\Delta u_k-\Delta u\|_{L^2(
		\Omega)}
\end{align*}
and therefore
\begin{align*}
	\|\Delta u_{k+1}-\Delta u \|_{L^2(\Omega)}	&\le \rho_k  \|\Delta u_k-\Delta u\|_{L^2(
		\Omega)} , 
\end{align*}
where 
\begin{eqnarray}
\label{rhok}
	\rho_k:&=& \| \dfrac{(\Delta u_k)^2+\Delta u_k \cdot \Delta u +(\Delta u)^2}{(w(u_k,f))^2+w(u_k,f)w(u,f)+(w(u,f))^2}\|_{L^\infty(\Omega)}+ \cr
	&& \frac{81}{A} \| \dfrac{(1-t)D^2 u_k+tD^2u}{(w(u_k,f))^2+w(u_k,f)w(u,f)+(w(u,f))^2}\|^2_\infty.
\end{eqnarray}
We are now ready to conclude the following result 
\begin{theorem}\label{mainthm2}
	Suppose that $\Omega$ is bounded and has  uniformly positive reach. 
	If $\rho_k\le \gamma <1$ for all $k\ge 1$, 
	then the sequence $\{u_k\}$ from Algorithm~\ref{alg1.1} converges.
\end{theorem}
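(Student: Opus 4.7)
The proof plan is essentially to harvest the contraction estimate that the paper has already assembled and run a standard Banach-type iteration. The chain of calculations culminating in
\[
\|\Delta u_{k+1} - \Delta u\|_{L^2(\Omega)} \le \rho_k \, \|\Delta u_k - \Delta u\|_{L^2(\Omega)}
\]
has already been derived just above the theorem statement, so the hypothesis $\rho_k \le \gamma < 1$ immediately makes the map $u_k \mapsto u_{k+1}$ a contraction with respect to the functional $\|\Delta(\cdot)\|_{L^2(\Omega)}$. First I would iterate this bound in $k$ to obtain the geometric estimate
\[
\|\Delta u_{k} - \Delta u\|_{L^2(\Omega)} \le \gamma^{k} \, \|\Delta u_0 - \Delta u\|_{L^2(\Omega)},
\]
which already yields convergence of $\Delta u_k$ to $\Delta u$ in $L^2(\Omega)$ and, by a telescoping argument, shows that $\{\Delta u_k\}$ is Cauchy in $L^2(\Omega)$ with rate $\gamma$.

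Next I would upgrade this $L^2$-convergence of the Laplacian to an $H^2$-convergence of the functions themselves. For this I would apply Lemma~\ref{mjlai03222021}: since both $u_k$ and the exact solution $u$ satisfy the Dirichlet data $g$ on $\partial\Omega$, their difference $u_k - u$ lies in $H^2(\Omega) \cap H^1_0(\Omega)$, and the norm-equivalence
\[
A \, \|u_k - u\|_{H^2(\Omega)} \le \|\Delta u_k - \Delta u\|_{L^2(\Omega)}
\]
together with the geometric decay above gives
\[
\|u_k - u\|_{H^2(\Omega)} \le \frac{\gamma^{k}}{A} \, \|\Delta u_0 - \Delta u\|_{L^2(\Omega)} \longrightarrow 0,
\]
establishing convergence of $\{u_k\}$ in $H^2(\Omega)$, and hence also in $L^2(\Omega)$ and $H^1(\Omega)$.

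The main subtle point, rather than any analytical obstacle, is the boundary-condition bookkeeping: the collocation iterates only satisfy $u_k = g$ on $\partial\Omega$ approximately (up to the tolerance built into the minimization \eqref{min2ma}), so strictly speaking $u_k - u$ only lies approximately in $H^1_0(\Omega)$. I would handle this the same way the paper handles it elsewhere, namely by either assuming exact boundary interpolation as in Theorem~\ref{mjlai05122021} or by absorbing the boundary mismatch into a lower-order term controlled by $\epsilon_2$. A secondary concern is ensuring that the uniform bound $\rho_k \le \gamma < 1$ is compatible with the computed iterates staying in the regime where Lemma~\ref{eigen1} applies (i.e.\ $u_k$ remains close enough to $u^*$ that the eigenvalues of $D^2 u_k$ stay in a compact positive interval), but this is exactly what the hypothesis $\rho_k \le \gamma$ is encoding through the denominators $(w(u_k,f))^2 + w(u_k,f)w(u,f) + (w(u,f))^2$ in \eqref{rhok}, so no additional work is required beyond invoking the assumption.
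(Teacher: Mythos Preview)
Your proposal is correct and matches the paper's approach: the paper derives the contraction inequality $\|\Delta u_{k+1}-\Delta u\|_{L^2}\le\rho_k\|\Delta u_k-\Delta u\|_{L^2}$ in the discussion immediately preceding the theorem and then simply states the result as a direct conclusion, with no separate proof block. Your write-up makes explicit the geometric iteration and the upgrade to $H^2$ via Lemma~\ref{mjlai03222021}, and your remarks on the boundary-condition bookkeeping go slightly beyond what the paper spells out, but the core argument is identical.
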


Note that our numerical experiments show that for some testing function $u$, we have indeed 
$\rho_k<1$ while there is other testing function $u$ which gives $\rho_k>1$. 
See Figures~\ref{averageA} and \ref{averageApart}. Also, it is hard to estimate $\rho_k$ from the formula (\ref{rhok}). 
\begin{figure}[htpb]
		\includegraphics[height=0.4\linewidth, width=1\linewidth]{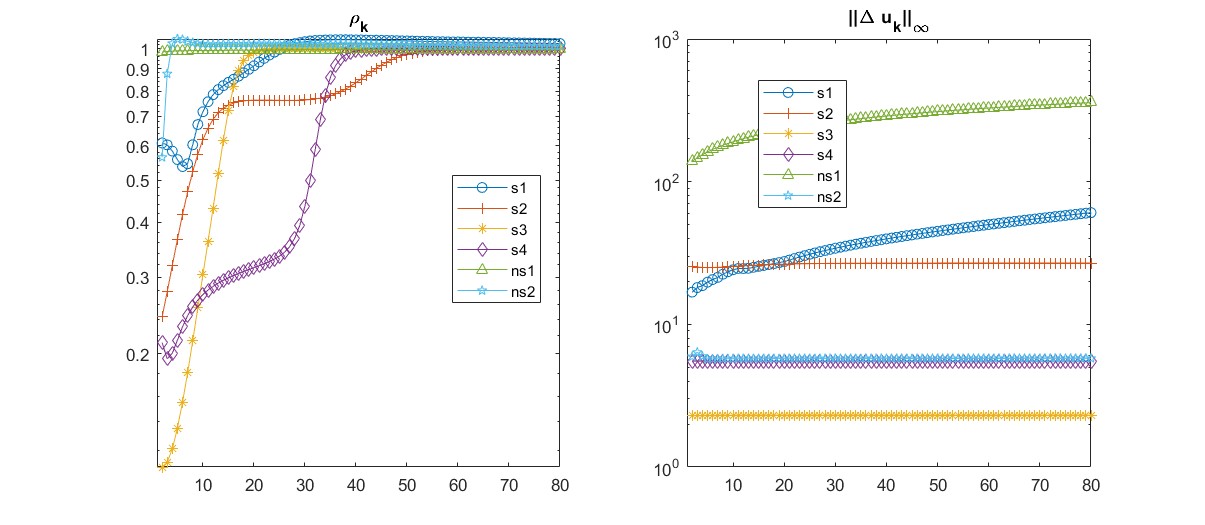}
	\caption{$\rho_k$ and $\|\Delta u_k\|_{\infty}$  for 80 iterations for smooth solutions $s_1,s_2, s_3$ and 
		non-smooth solutions $ns_1, ns_2$ }\label{averageA}
\end{figure}
\begin{figure}[htpb]
		\includegraphics[height=0.4\linewidth, width=1\linewidth]{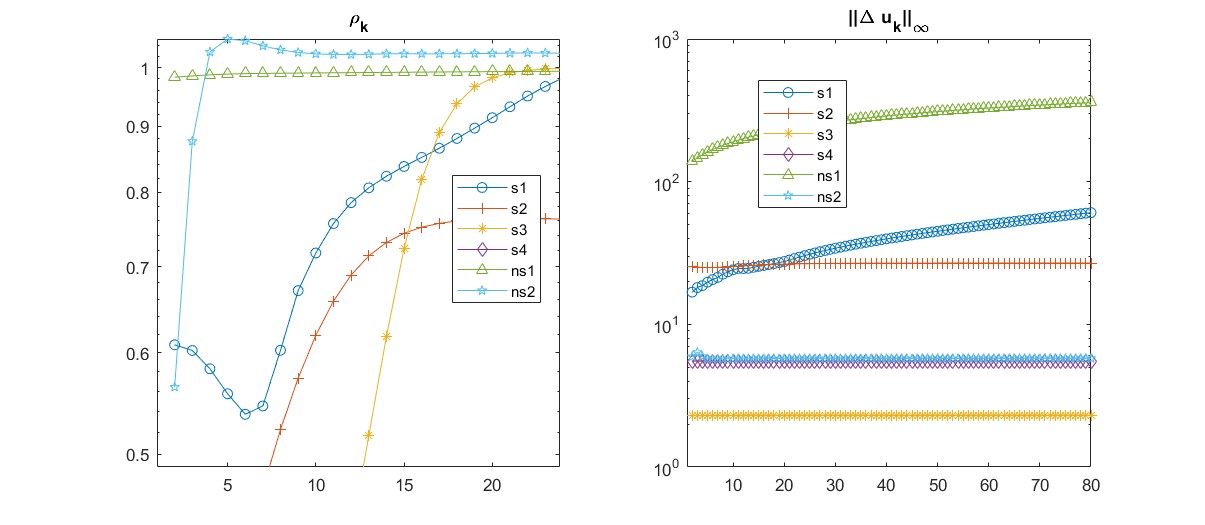}
	\caption{An enlarged graphs in Figure \ref{averageA} 
	}\label{averageApart}
\end{figure}

In Figures \ref{averageA}  and \ref{averageApart}, we plot $\rho_k$ 
corresponding to the numerical solution for smooth solutions $s_1,s_2, s_3,s_4$ 
and non-smooth solutions $ns_1, ns_2$. They are defined as follows.

\begin{itemize}
\item $s_1$: polynomial function $(x^2+5y^2+15z^2)/2$;
\item $s_2$: exponential function $\exp((x^2+y^2+z^2)/2)$;
\item $s_3$: radical function $-\sqrt{6-(x^2+y^2+z^2)}$;
\item $s_4$: $(x^2+y^2+z^2)/2-\sin(x)-\sin(y)-\sin(z)$;
\item $ns_1$: $-\sqrt{3-(x^2+y^2+z^2)}$ where $f$ is $\infty$ at $(1,1,1)$; 
\item $ns_2$: $\dfrac{(x^2+y^2+z^2)^{3/4}}{3}$ where $f$ is $\infty$ at $(0,0,0)$.
\end{itemize}
The graphs in these  figure above  show that $\rho_k<1$  and $\|\Delta u_k\|_{\infty}$ are bounded for smooth testing solutions.
However, $\rho_k$ may be bigger than $1$ and $\|\Delta u_k\|_{\infty}$ may increase which maybe unbounded for nonsmooth testing functions.

When $\rho_k>1$, the above analysis will not be useful to see convergence of the sequence $\{u_k\}$. 
The remaining case is  $\rho_k \le 1$.  In this case, we need Algorithm~\ref{alg2}.
That is,  we now study the convergence of our Algorithm~\ref{alg2}.  Letting 
$u^k, k\ge 1$ be the sequence from Algorithm~\ref{alg2}, it is easy to see that 
\begin{equation}
	\label{bound}
	u_{k+1}- u^* = \frac{1}{2}(u_{k}- u^*) +\frac{1}{2}(T(u_{k})- T(u^*))
\end{equation}
for all $k\ge 1$ since $u^*$ is a fixed point of $T$. 
Since $\rho_k \le 1$, we have $\|T(u_{k})- T(u^*)\|
\le \|u_{k}- u^*\|$ and hence, $\|u_{k+1}- u^* \|\le \|u_{k}- u^* \|$ for all $k\ge 1$. 
It follows that $u_k, k\ge 1$ are bounded in a $H^2(\Omega)$ norm.  
We now show the  averaged iterative algorithm converges. 
\begin{theorem}
	Suppose that $\Omega$ is a bounded domain which has a uniformly positive reach. Suppose that
	$g\in H^{1/2}(\partial\Omega)$.  Suppose that $\rho_k\le 1$.  
	Then  Averaged Iterative Algorithm~\ref{alg2} converges.
\end{theorem}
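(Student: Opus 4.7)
The plan is to view the averaged iteration as the Krasnoselskii--Mann scheme for the operator $T$ and to exploit the firmly nonexpansive character of $S(u):=\tfrac12 u+\tfrac12 T(u)$. First I would translate to zero boundary data by setting $v_k:=u_k-u^*$, so that each $v_k$ lies in $H^2(\Omega)\cap H^1_0(\Omega)$ since both $u_k$ and the exact solution $u^*$ attain the same Dirichlet datum $g\in H^{1/2}(\partial\Omega)$. On this closed subspace, Lemma~\ref{mjlai03222021} makes $\|u\|_L=\|\Delta u\|_{L^2(\Omega)}$ an equivalent Hilbert norm. Because $u^*=T(u^*)$, the averaging step gives
\[
v_{k+1}=\tfrac12 v_k+\tfrac12\bigl(T(u_k)-T(u^*)\bigr),
\]
and the derivation leading to Theorem~\ref{mainthm2} combined with $\rho_k\le 1$ furnishes the nonexpansive bound $\|T(u_k)-T(u^*)\|_L\le\|v_k\|_L$.

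Next I would apply the parallelogram identity in the Hilbert space $(H^2\cap H^1_0,\|\cdot\|_L)$ to obtain
\[
\|v_{k+1}\|_L^2 = \tfrac12\|v_k\|_L^2+\tfrac12\|T(u_k)-T(u^*)\|_L^2-\tfrac14\|v_k-(T(u_k)-T(u^*))\|_L^2.
\]
Since $v_k-(T(u_k)-T(u^*))=u_k-T(u_k)$, using the nonexpansive bound collapses this to
\[
\|v_{k+1}\|_L^2\le\|v_k\|_L^2-\tfrac14\|u_k-T(u_k)\|_L^2.
\]
Iterating and telescoping yields both the monotone nonincrease of $\{\|v_k\|_L\}$ (so $\{u_k\}$ stays bounded in $H^2(\Omega)$) and the asymptotic regularity
\[
\sum_{k=0}^{\infty}\|u_k-T(u_k)\|_L^2\le 4\|v_0\|_L^2<\infty,
\]
which forces $\|u_k-T(u_k)\|_L\to 0$.

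Finally I would extract a limit and promote subsequential to full convergence. Bounded\-ness of $\{v_k\}$ in the Hilbert space $(H^2\cap H^1_0,\|\cdot\|_L)$ delivers a weakly convergent subsequence $v_{k_j}\rightharpoonup\bar v$, and the compact embedding $H^2(\Omega)\hookrightarrow H^1(\Omega)$ upgrades this to strong $H^1$ convergence, which, together with $\|u_{k_j}-T(u_{k_j})\|_L\to 0$, lets one pass to the limit in the Poisson equation defining $T$ and identify $\bar u:=u^*+\bar v$ as a fixed point of $T$, i.e.\ $\det D^2\bar u=f$ in $\Omega$ with $\bar u=g$ on $\partial\Omega$. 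To upgrade this to convergence of the full sequence, I would invoke Opial's lemma: replaying the parallelogram argument with any fixed point $\tilde u$ of $T$ in place of $u^*$ shows that $\{\|u_k-\tilde u\|_L\}$ is monotone nonincreasing, hence convergent, for every such $\tilde u$, and together with the fact that every weak cluster point is a fixed point, this pins down a unique weak limit in $H^2$.

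The main obstacle will be the passage to the limit in the nonlinear map $T$, which involves $(\Delta u)^3$ and $\det D^2 u$: weak $H^2$ convergence alone does not yield continuity of $T$. Circumventing this will require keeping $\{u_k\}$ inside a set on which $T$ is continuous, for instance a ball in $C^2(\overline\Omega)$ supplied by Lemma~\ref{eigen1} through the assumption that the iterates stay close to $u^*$, so that the cofactor and Laplacian-cube terms are controlled uniformly. If instead one restricts the analysis to a fixed finite-dimensional spline space $\mathcal S^r_D(\triangle)$, then boundedness automatically gives norm precompactness and the continuity issue disappears, leaving a short and complete argument; the delicate point is genuinely the infinite-dimensional formulation stated in the theorem.
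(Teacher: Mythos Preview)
Your argument is essentially the paper's own: both run the Krasnoselskii--Mann inequality
\[
\|u_{k+1}-u^*\|^2\le \|u_k-u^*\|^2-\tfrac14\|u_k-T(u_k)\|^2
\]
from the parallelogram identity together with the nonexpansiveness $\|T(u_k)-T(u^*)\|\le\|u_k-u^*\|$ supplied by $\rho_k\le 1$, telescope to obtain asymptotic regularity $\|u_k-T(u_k)\|\to 0$, and then pass to a subsequential limit to produce a fixed point of $T$.

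Two small differences are worth noting. First, for the upgrade from subsequential to full convergence the paper does not invoke Opial's lemma but instead uses uniqueness of the convex solution of the Monge--Amp\`ere equation: any subsequential limit $\hat u$ satisfies $\det D^2\hat u=f$ with $\hat u=g$ on $\partial\Omega$, hence $\hat u=u^*$, and since $\|u_k-u^*\|$ is already known to be monotone nonincreasing this forces convergence of the whole sequence. This is a shorter route than Opial in the present setting. Second, the delicate point you flag---continuity of $T$ under the mode of convergence actually available for the extracted subsequence---is a genuine issue, and the paper simply appeals to ``continuity of the operator $T$'' without specifying the topology; your observation that the argument is clean when one works inside a fixed finite-dimensional spline space $\mathcal S^r_D(\triangle)$, where boundedness gives norm precompactness, is exactly the regime in which the paper's iterates live, so your diagnosis is apt.
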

\begin{proof}
	Let $S=H^2(\Omega)$. 
	By the assumptions, the operator $T$ defined above from 
	$S$ to $S$ is a continuous and nonexpansive operator.   
	We first recall the following equality: For any $x,y,z\in S$ and a real number 
	$\lambda \in [0, 1],$ we have the following identity
	\begin{align*}
		\lambda \|x-z\|^2+(1-\lambda) \|y-z\|^2-\lambda (1-\lambda)\|x-z\|^2=\|\lambda x+(1-
		\lambda)y-z\|^2.
	\end{align*}
	The proof is left to the interested reader. Let $\lambda=1/2$ and $x=u^k, y=T(u^k),$ 
	and $z=u^*$ which is a fixed point or the solution. Then we have
	\begin{align*}
		\|u_{k+1}-u^*\|^2&=\frac{1}{2}\|u_{k}-u^*\|^2+\frac{1}{2}\|T(u_{k})-u^*\|^2
		-\frac{1}{4}\|u_{k}-T(u_{k})\|^2\cr
		&=\frac{1}{2}\|u_{k}-u^*\|^2+\frac{1}{2}\|T(u_{k})-T(u^*)\|^2-\frac{1}{4}\|u_{k}-T(u_{k})
		\|^2\cr
		&\le \|u_{k}-u^*\|^2-\frac{1}{4}\|u_{k}-T(u_{k})\|^2.
	\end{align*}
	It follows that
		\begin{equation*}
			\sum_{k=1}^N \frac{1}{4}\|u_{k}-T(u^{k})\|^2+\|u_{N+1}-u^*\|^2\le \|u^{0}-u^*\|^2	\end{equation*}
	for any integer $N>1$.  
	That is, $\|u_{k}-T(u_{k})\|\to 0$ when $k\to \infty$. 
	
We now claim that the sequence $u_k, k\ge 1$ converges. 
Note that due to the nonexpansiveness, $\|u_k\|, k\ge 1$ are bounded as explained above.  
	Let $\hat{u}$ be the limit of a subsequence of 
	$u^k, k\geq 1.$ Then we have $\hat{u}=T(\hat{u})$ by the continuity of the operator $T.$ 
	So $\hat{u}$ is a fixed point of $T$. By the definition of $T$, we have
		\begin{equation*}
	\Delta \hat{u} = \sqrt[3]{(\Delta \hat{u})^3+ 27(f- \det (D^2 \hat{u}))}
		\end{equation*}
	or $(\Delta \hat{u})^3 =(\Delta \hat{u})^3+ 27(f- \det (D^2 \hat{u}))$. It follows 
	that $f= \det (D^2 \hat{u})$.  Since the \MAE has a unique solution, we have $\hat{u}=u^*$.
	If there exists another  $\tilde{u}$ which is the limit of another subsequence of 
	$u^k, k\ge 1$, we also have $\tilde{u}=T(\tilde{u})$.  Then $\tilde{u}=u^*$.
	Hence, the sequence $\{u_k, k\ge 1\}$ from Algorithm~\ref{alg2} converges. \qed
\end{proof}

\section{Numerical Results for 3D Monge-Amp\`ere Equations}
\label{sec:3}
In this section, we present numerical results from various computational experiments. We will 
first test several smooth and nonsmooth solutions over convex domains, such as $[0, 1]^3$.   Next, we show the numerical results over non-convex 
domains such as $C, L, S$-shaped domains. 
For all the experiments, we use 8 processors on a parallel computer, which has AMD Ryzen 7 4800H with Radeon Graphics 2.90 GHz.  
All the cases, the errors are computed based on $NI=351 \times 351  \times 351 $ equally spaced points 
$\{(\eta_i)\}_{i=1}^{NI}$ fell inside the  domain of computation.
The errors will be calculated according to the norms
\begin{align*}
	\begin{cases}
		|u|_{l_2}&=\sqrt{\frac{\sum_{i=1}^{NI} (u(i))^2}{NI} }\cr
		|u|_{h_1}&=\sqrt{\frac{\sum_{i=1}^{NI} (u(i))^2+(u_x(i))^2+(u_y(i))^2+(u_z(i))^2}{NI} }\cr
		|u|_{l_\infty}&= \max |u(i)|,
	\end{cases}
\end{align*}
where $u(i):=u(\eta_i), u_x(i):=u_x (\eta_i)$, $u_y(i):=u_y(\eta_i)$ and 
$u_z(i):=u_z(\eta_i)$ for given functions $u, 
u_x, u_y, u_z.$ Tables in this section are the numerical results of  $ |e_s|_{l_2}$ 
and $|e_s|_{h_1}$, where $e_s:=u-u_s$.

\subsection{Smooth Testing Functions}
\label{sec:3.1}
\begin{example}[Polynomial Examples]
	In \cite{CGG18}, the researchers experimented the following two smooth exact solutions:  
	\begin{itemize}
		\item $f^{3d1}=75$ such that an exact solution is $u^{3ds1}=\frac{1}{2}(x^2+5y^2+15 z^2)$
		\item $f^{3d2}=1000$ such that an exact solution is $u^{3ds2}=\frac{1}{2}(x^2+10y^2+100 z^2)$
	\end{itemize}
Numerical results of their least squares/relaxation method (called LR method in this paper) are shown in 
Table~\ref{table3dcompare0}. Together we present numerical results based on  our spline collocation method  
by Algorithms 2.  
\begin{table}[htpb]
		\centering
\caption{Errors of numerical solutions $u^{3ds1}, u^{3ds2}$ for Monge Amp\`ere  equation over $[0,1]^3$ 
for LL methods with $D=5$, $r=1$ and LR method in \cite{CGG18}}\label{table3dcompare0}
		\begin{tabular}{ c c  c c c } 
	\toprule
			\multicolumn{5}{c}{LR method}  \\
			\hline
				\multirow{2}{0.5cm}{$h$}&	\multicolumn{2}{c}{$u^{3ds1}$}&	\multicolumn{2}{c}{$u^{3ds2}$}  \\
		 \cmidrule(lr){2-3} \cmidrule(r){4-5}
		&$|e_s|_{l_2}$&$|e_s|_{h^1}$&$|e_s|_{l_2}$&$|e_s|_{h^1}$  \\
		\bottomrule
			0.2&7.19e-02&1.58e-00&2.74e-02&5.16e-01\cr
			0.1&1.80e-02&7.91e-01&7.52e-03&2.81e-01\cr
			0.0625&7.06e-03&4.95e-01&3.06e-03&1.83e-01\cr
			0.04&2.89e-03&3.16e-01&1.26e-03&1.20e-01\cr
		\toprule
			\multicolumn{5}{c}{LL method}  \\
			\hline
				\multirow{2}{0.5cm}{$h$}&	\multicolumn{2}{c}{$u^{3ds1}$}&	\multicolumn{2}{c}{$u^{3ds2}$}  \\
		\cmidrule(lr){2-3} \cmidrule(r){4-5}
		&$|e_s|_{l_2}$&$|e_s|_{h^1}$ &$|e_s|_{l_2}$&$|e_s|_{h^1}$ \\
			\bottomrule
			0.25&2.68e-07  & 1.01e-05&2.48e-04  & 4.63e-03 \\
			\hline
		\end{tabular}
	\end{table}
Table \ref{table3dcompare0} shows our spline collocation method (called LL method) 
produces more accurate solutions than those presented in \cite{CGG18}.  
The eigenvalues of the Hessian matrix are $1, 5, 15$ and 
therefore $\text{det}(D^2 u^{3ds1})=\lambda_1 \lambda_2 \lambda_3=75$ and $\Delta u^{3ds1}=\lambda_1+\lambda_2+\lambda_3=21.$ In  Algorithm~\ref{alg2}, 
we choose an initial value $\Delta u_{0}=14.55$ to approximate the exact solution $u^{3ds1}$. This choice of initial value leads to converging iterations since 14.55 is close to $\Delta u^{3ds1}=\sqrt[3]{27f}=\sqrt[3]{27\cdot 75}=12.65$.  
Similarly, we choose our initial value $u_0$ for $ u^{3ds2}$ satisfying $\Delta u_{0}=106.2$ 
which makes the iterations from Algorithm~\ref{alg2} converge. 
By choosing good initial value $u_0$ we achieve the numerical results shown in Table \ref{table3dcompare0}.
\end{example}

We also test other smooth solutions which were experimented in the literature, e.g., \cite{A13P},
\cite{A15}, \cite{CGG18}, \cite{VNNP20}, and etc..
\begin{example}[Smooth Exponential Functions]
	Consider a smooth exponential exact solution $u^{3ds3}=e^{\frac{(x^2+y^2+z^2)}{2}}$ associated with 
	$f^{3d3}=(1+x^2+y^2+z^2)e^{\frac{3(x^2+y^2+z^2)}{2}}$.   
	We compare our methods with the least squares/relaxation method(LR method) in \cite{CGG18}.
	Table \ref{table3dcompare1} shows comparison results including $l_2, h_1$ norm 
	of these two methods for each mesh size $h$. 
	
	\begin{table}[htpb]
			\caption{Errors of numerical solutions $u^{3ds3}$ for Monge Amp\`ere  equation over $[0,1]^3$ for LL methods with $D=5$, $r=1$ and LR method in \cite{CGG18}}\label{table3dcompare1}
				\centering
		\begin{tabular}{c c}	
			\begin{tabular}{ c c  c c c } 
			\toprule
				\multicolumn{5}{c}{LR method}  \\
				\hline
				\multicolumn{1}{c}{$h$}&$|e_s|_{l_2}$&rate&$|e_s|_{h^1}$&rate  \\
				\hline
				0.2&7.19e-02&-&1.58e-00&-\cr
				0.1&1.80e-02&1.99&7.91e-01&0.99\cr
				0.0625&7.06e-03&1.99&4.95e-01&1.00\cr
				0.04&2.89e-03&1.99&3.16e-01&0.99\cr
				\hline
			\end{tabular}
			&
			\begin{tabular}{ c c  c c c } 
				\toprule
				\multicolumn{5}{c}{LL method}  \\
				\hline
				\multicolumn{1}{c}{$h$} &$|e_s|_{l_2}$&rate&$|e_s|_{h^1}$&rate  \\
			\hline
				1& 1.17e-03  &  - & 1.24e-02 &  -  \cr  
				0.5 & 4.36e-05  &4.74 & 7.82e-04 &3.99  \cr  
				0.25 & 1.42e-06  &4.94 & 2.72e-05 &4.84  \cr  
				0.125 & 1.10e-07  &3.69 & 1.36e-06 &4.32  \cr  
			\hline
			\end{tabular}	
		\end{tabular}
	\end{table}
	
	We can see that  better convergence results using LL methods with $D=5, r=1.$ 
	In Figure \ref{grapherr1}, we show plots of the $|e_s|_{l_2}, |e_s|_{h^1}$ with respect to the mesh size $h$. We can see that the rate of convergences is about $\mathcal{O}(h^{4.82}).$ 
\begin{figure}[htpb]
	\centering
	\begin{tabular}{cc}
		\includegraphics[width=.4\linewidth]{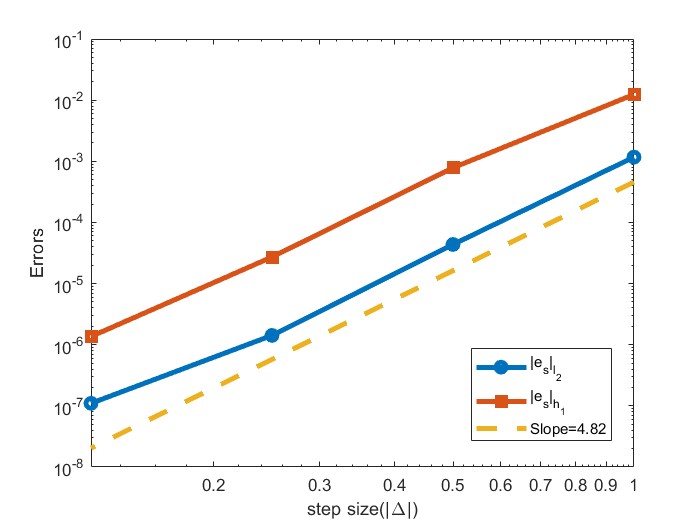}
		&
		\includegraphics[width=.4\linewidth]{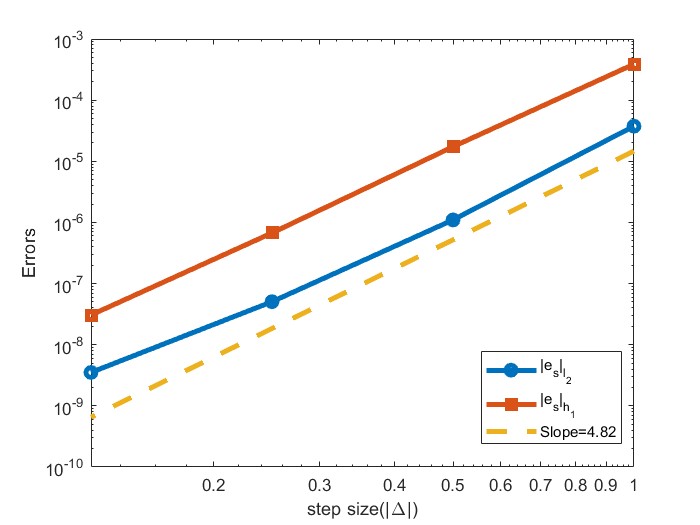}
		\cr
	\end{tabular}
	\caption{Convergence rates of $l_2, h_1$ errors for solutions $u^{3ds3}$(Left) and $u^{3ds5}$(Right) with respect to $|\triangle|$ 
	based on the LL method}\label{grapherr1}
\end{figure}
\end{example}

\begin{example}
	In \cite{A13P} and \cite{A15}, Awanou introduced the pseudo transient continuation, time marching methods and the spline element 
	methods for Monge-Amp\'ere equations. He presented several 2D and 3D numerical examples by his methods. 
	For testing function $u^{3ds4}=e^{\frac{(x^2+y^2+z^2)}{3}}$, it seems that 
	the numerical results using the spline element method (SE method) in \cite{A15} is the best.  We use our spline
	collocation method(LL method) and compare $L^2, H^1, H^2$ errors of our method and the SE method.  
	Table \ref{table3dcomparea1} and \ref{table3dcomparea2} show that we can 
	get better accuracy when $h=1$ and $h=1/2$ for each degree $D=4, 5, 6$.
	
	\begin{table}[htpb]
		\centering
			\caption{Errors of numerical solutions $u^{3ds4}$ for Monge Amp\`ere  equation over $[0,1]^3$ for LL methods with $D=3,4,5,6$, $r=1, h=1$ and SE method in \cite{A15}}\label{table3dcomparea1}
		\begin{tabular}{ c  c c c  c c c} 
		\toprule
		\multirow{2}{.4cm}{$D$}&\multicolumn{3}{c}{SE method} &	\multicolumn{3}{c}{LL method}   \\
		 \cmidrule(lr){2-4} \cmidrule(r){5-7}
		&$L^2$ norm&$H^1$ norm &$H^2$ norm &$L^2$ norm&$H^1$ norm &$H^2$ norm\\
	\midrule
			3  & 1.2338e-02 & 7.6984e-02  & 4.4411e-01 	 & 1.6916e-02 & 1.0879e-01  & 3.8250e-01 \cr  
			4  & 1.6289e-03 & 1.4719e-02  & 1.3983e-01 & 6.4696e-04 & 6.1874e-03  & 3.7146e-02 \cr   
			5  & 1.5333e-03 & 8.7312e-03  & 6.0412e-02 & 1.7440e-04 & 2.2203e-03  & 1.7392e-02 \cr   
			6 & 1.2324e-04 & 9.7171e-04  & 1.0584e-02  & 4.6740e-05 & 6.2257e-04  & 3.5432e-03 \cr  
			\bottomrule
		\end{tabular}
	\end{table}
	\begin{table}[htpb]
		\centering
			\caption{Errors of numerical solutions $u^{3ds4}$ for Monge Amp\`ere  equation over $[0,1]^3$ for LL methods with $D=3,4,5,6$, $r=1, h=1/2$ and SE method in \cite{A15}}\label{table3dcomparea2}
		\begin{tabular}{ c  c c c  c c c} 
		\toprule
		\multirow{2}{.4cm}{$D$}&\multicolumn{3}{c}{SE method} &	\multicolumn{3}{c}{LL method}   \\
		\cmidrule(lr){2-4} \cmidrule(r){5-7}
		&$L^2$ norm&$H^1$ norm &$H^2$ norm &$L^2$ norm&$H^1$ norm &$H^2$ norm\\
		\midrule
			3  & 3.1739e-03 & 2.3005e-02  & 2.4496e-01 	& 2.4294e-03 & 1.5806e-02  & 1.0139e-01 \cr  
			4  & 3.2786e-04 & 3.5626e-03  & 5.2079e-02 & 9.5591e-05 & 1.1644e-03  & 9.8077e-03 \cr  
			5  & 2.4027e-05 & 3.9210e-04  & 8.8868e-03 & 5.8750e-06 & 1.2214e-04  & 1.4292e-03 \cr  
			6 & 1.3821e-06 & 2.2369e-05  & 6.0918e-04 & 6.0635e-07 & 1.4198e-05  & 1.6487e-04 \cr    
				\bottomrule
		\end{tabular}
	\end{table}
\end{example}

\subsection{Non-smooth Testing Functions}
\label{sec:3.2}
\begin{example}
	In \cite{CGG18}, the researchers considered the following problem which do not have exact solution with the 
	$H^2(\Omega)-$ regularity or may have no solution at all. For $R\geq \sqrt{3},$ let 
	$u=-\sqrt{R^2-(x^2+y^2+z^2)}$ be a testing function.  
	When $R>\sqrt{3},$ this function belongs to $C^\infty (\bar{\Omega})$, while $u\in C^0(\bar{\Omega})\cap 
	W^{1,s}(\Omega),$ with $1\le s<2,$ if $R=\sqrt{3}.$ More precisely, let us consider the following two solutions  
	$$u^{3ds5}=-\sqrt{6-(x^2+y^2+z^2)} ~~\text{with}~~ f^{3d5}=6(6-(x^2+y^2+z^2))^{-\frac{5}{2}}$$ and $$u^{3ds6}=-\sqrt{3-(x^2+y^2+z^2)} ~~\text{with}~~ f^{3d6}=3(3-(x^2+y^2+z^2))^{-\frac{5}{2}}.$$
	The numerical results from the least squares/relaxation method in \cite{CGG18} (called LR method) 
	are shown in Table~\ref{table3dcompare2}. In Figure \ref{grapherr1}, we can see that the rate of convergences of $u^{3ds5}$ are about $\mathcal{O}(h^{4.82}).$ In addition, we show our spline collocation method (called LL method) in the
	same table for comparison.  
	
	\begin{table}[htpb]
			\caption{Errors of numerical approximation of the solution $u^{3ds5}$ for Monge Amp\`ere  equation over $[0,1]^3$ by the and LR method and by the LL method with $D=5$, $r=1$}\label{table3dcompare2}
			\centering
		\begin{tabular}{c c}
			\begin{tabular}{ c c  c c c } 
					\toprule
				\multicolumn{5}{c}{LR method}  \\
				\hline
				\multicolumn{1}{c}{$|\triangle|$}&$|e_s|_{l_2}$&rate&$|e_s|_{h^1}$&rate  \\
			\midrule
				0.2&4.96e-03&-&8.60e-02&-\cr
				0.1&1.28e-03&1.95&4.41e-02&0.96\cr
				0.0625&5.09e-04&1.96&2.78e-02&0.97\cr
				0.04&2.10e-04&1.97&1.79e-02&0.98\cr
			\bottomrule
			\end{tabular}
			&
			\begin{tabular}{ c c  c c c } 
			\toprule
			\multicolumn{5}{c}{LL method}  \\
				\hline
				\multicolumn{1}{c}{$|\triangle|$} &$|e_s|_{l_2}$&rate&$|e_s|_{h^1}$&rate  \\
				\midrule
				1 & 3.75e-05  &  - & 3.88e-04 &  -  \cr  
				0.5 & 1.10e-06  &5.09 & 1.73e-05 &4.49  \cr  
				0.25& 5.05e-08  &4.45 & 6.73e-07 &4.69  \cr  
				0.125 & 3.52e-09  &3.84 & 3.06e-08 &4.46  \cr  
				\bottomrule
			\end{tabular}	
		\end{tabular}
	\end{table}

	\begin{table}[htpb]
			\caption{Errors of numerical approximation of the solution $u^{3ds6}$ for Monge Amp\`ere  equation over $[0,1]^3$ by the and LR method and by the LL method with $D=5$, $r=1$}\label{table3dcompare3}
					\centering
		\begin{tabular}{c c}
			\begin{tabular}{ c c  c c c } 
			\toprule
			\multicolumn{5}{c}{LR method}  \\
				\hline
				\multicolumn{1}{c}{$|\triangle|$}&$|e_s|_{l_2}$&rate&$|e_s|_{h^1}$&rate  \\
			\midrule
				0.2&1.15e-02&-&6.60e-01&-\cr
				0.1&3.06e-03&1.91&6.31e-01&-\cr
				0.0625&1.24e-03&1.92&6.25e-01&-\cr
				0.04&5.17e-04&1.96&6.22e-01&-\cr
			\bottomrule
			\end{tabular}
			&
			\begin{tabular}{ c c  c c c } 
			\toprule
			\multicolumn{5}{c}{LL method}  \\
				\hline
				\multicolumn{1}{c}{$|\triangle|$} &$|e_s|_{l_2}$&rate&$|e_s|_{h^1}$&rate  \\
			\midrule
				1 & 8.07e-02  &  - & 7.02e-01 &  -  \cr  
			0.5 & 7.06e-03  &3.52 & 1.63e-01 &2.10 \cr  
			0.25 & 4.78e-04  &3.88 & 2.21e-02 &2.89  \cr  
			0.125 & 3.85e-04  &0.31 & 2.54e-02 &-0.20  \cr  
			0.0625&3.57e-04&0.11 &  1.98e-02&0.35\cr
				\bottomrule
			\end{tabular}	
		\end{tabular}
	\end{table}

	It is clear to see that when the solution $u^{3ds5}$ is smooth, both methods work nicely and our collocation method 
is much accurate. 

Next let us consider the non-smooth solution $u^{3ds6}$ in Table~\ref{table3dcompare3}. 
Table \ref{table3dcompare3} shows numerical results such as $l_2, h_1$ errors of these two methods 
for various mesh sizes. Our method can get more accurate solution with $D=5, r=1$ with the large mesh size $|\triangle|$. However, it is clear that an adaptive method is needed to improve the approximation since the maximal error, $e_s=u-u_s$, is worst  near the point $(1,1,1)$.
\end{example}

\subsection{Numerical Results over Nonconvex Domains}
\label{sec:3.3}
\begin{figure}[htpb]
	\centering
	\begin{tabular}{ccc}
		\includegraphics[width=.33\linewidth]{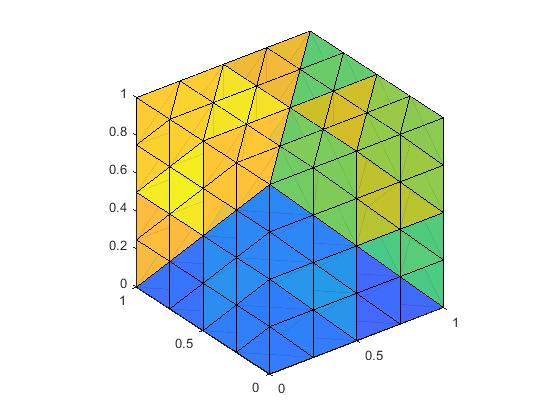}
		&
		\includegraphics[width=.33\linewidth]{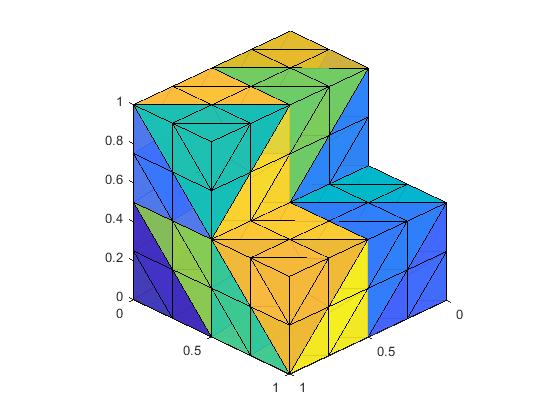}
		&
		\includegraphics[width=.33\linewidth]{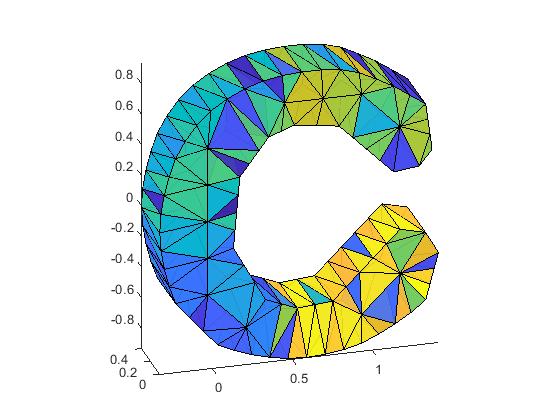}\cr
		\includegraphics[width=.33\linewidth]{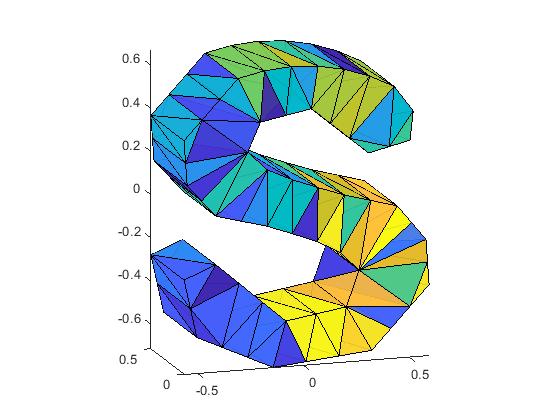}
			&
		\includegraphics[width=.33\linewidth]{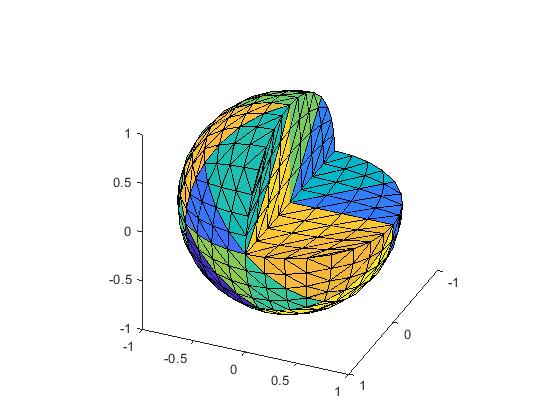}
		\cr
	\end{tabular}
	\caption{Several 3D domains (Top : Cube, Letter L, Letter C , Bottom: Letter S, Subset of the unit ball)}\label{3dfdomain1}
\end{figure}
In this section, we test various solutions for each domain in Figure \ref{3dfdomain1}.
We display CPU times versus the number of vertices and triangles in Table \ref{MATime1} for each domain in Figure \ref{3dfdomain1} when $D=9, r=1$.

\begin{table}[htpb]
	\centering
		\caption{CPU time results and numbers of vertices and tetrahedrons over domains in Figure \ref{3dfdomain1} 
		when $D=9, r=1$}\label{MATime1}
	\begin{tabular}{ c|c c c c} 
	\toprule
		\multicolumn{1}{c|}{Domain}&No. of Vetices&No. of Tetrahedrons&Total CPU(s)  \cr
	\midrule
		Cube &  125& 384&  56.0  \cr  
		Letter L &  105 &   288&  42.7    \cr  
		Letter C& 190&  431&129.2  \cr  
		Letter S & 115&  171& 45.9      \cr  
		\bottomrule
	\end{tabular}
\end{table}

\begin{example}
	We use our method to numerically solve three smooth testing functions $u^{3ds3}, u^{3ds4}, u^{3ds5}$  over 5 solids
which are not strictly convex or not convex.  They  even do not have an uniformly positive reach.   
	Table \ref{table3d3} shows our method performs very well.
	\begin{table}[htpb]
		\centering
			\caption{Errors of numerical solutions $u^{3ds3}-u^{3ds5}$ for Monge Amp\`ere  equations over several domains in Figure \ref{3dfdomain1} for LL methods with $D=9$, $r=1$}\label{table3d3}
		\begin{tabular}{ c c c c c  c c c c}
		\toprule 
			\multirow{2}{1cm}{Solution}&	\multicolumn{2}{c}{Cube}	&	\multicolumn{2}{c}{Letter L}	&	\multicolumn{2}{c}{Letter C}	&	\multicolumn{2}{c}{Letter S}\cr
		\cmidrule(lr){2-3} \cmidrule(r){4-5} \cmidrule(r){6-7} \cmidrule(r){8-9}
			 &$|e_s|_{l_2}$&$|e_s|_{h^1}$&$|e_s|_{l_2}$&$|e_s|_{h^1}$ &$|e_s|_{l_2}$&$|e_s|_{h^1}$&$|e_s|_{l_2}$&$|e_s|_{h^1}$  \\
			\midrule
			$u^{3ds3}$ & 1.76e-09 & 1.64e-07&6.63e-10& 2.58e-08  & 1.48e-08& 8.67e-07 &  3.39e-11& 1.75e-09       \cr  
			$u^{3ds4}$& 2.82e-11 & 1.91e-10 & 3.90e-11& 1.04e-09  & 2.31e-08& 3.56e-07  & 5.84e-11& 2.70e-09       \cr 
			$u^{3ds5}$ & 5.05e-02 & 3.61e-01 &  2.47e-08& 9.56e-07 & 3.87e-08& 3.32e-06   &  6.03e-10& 5.03e-08       \cr  
		\bottomrule
		\end{tabular}
	\end{table}
\end{example}

\begin{example} 
	In \cite{CGG18}, the researchers considered the problem over the unit ball 
	$\Omega=\{(x,y,z)| x^2+y^2+z^2<1\}$ and a convex solution 
	$$u^{3ds7}=-\frac{1}{2\sqrt{3}}(1-x^2-y^2-z^2)$$ 
	of the Monge-Amp\'ere-Dirichlet problem with $f=\frac{1}{3\sqrt{3}}.$ They experimented their numerical solutions
	(called LR method) over the unit ball as well as the 3/4 ball as shown in  Figure \ref{3dfdomain1}. 

	In Table \ref{table3d7}, we first include the numerical results from \cite{CGG18} 
	and then compare the $L^2(\Omega), H^1(\Omega)$ norms of the computed approximation 
	error $u^{3ds7}-u_s$ by our spline collocation method. 
	\begin{table}[htpb]
			\caption{Errors of numerical approximation of solution $u^{3ds7}$ for Monge Amp\`ere  equation 
			over the unit ball for the LR method and the LL method with $D=5$, $r=1$}\label{table3d7}
				\centering
		\begin{tabular}{c c}
			\begin{tabular}{ c c  c c c } 
				\toprule
				\multicolumn{5}{c}{LR method}  \\
				\hline
				\multicolumn{1}{c}{$|\triangle|$}&$|e_s|_{l_2}$&rate&$|e_s|_{h^1}$&rate  \\
			\midrule
				2.98e-01&3.26e-02&-&2.60e-01&-\cr
				1.61e-01&1.11e-02&1.74&1.28e-01&1.14\cr
				8.32e-02&3.22e-03&1.88&6.16e-02&1.11\cr
				4.34e-02&9.89e-04&1.80&2.86e-02&1.17\cr
			\bottomrule
			\end{tabular}
			&
			\begin{tabular}{ c c  c c c } 
				\toprule
				\multicolumn{5}{c}{LL method}  \\
				\hline
				\multicolumn{1}{c}{$|\triangle|$} &$|e_s|_{l_2}$&rate&$|e_s|_{h^1}$&rate  \\
				\midrule
				1 & 3.71e-13  & - & 3.15e-12 & -\cr  
				0.5& 2.97e-14  &3.64 & 1.39e-13 &4.51 \cr  
				\bottomrule
			\end{tabular}	
		\end{tabular}
	\end{table}
	In addition, we tested the solution $u^{3ds7}$ over the subset of the unit ball as shown in Figure \ref{3dfdomain1}. 
	The numerical results we obtained are displayed in Table \ref{table3d8}.
	\begin{table}[htpb]
		\centering
			\caption{CPU time and errors of our spline solution to $u^{3ds7}$ for Monge Amp\`ere  equation over the domain in Figure \ref{3dfdomain1}  with $D=5$, $r=1$, the number of vertices=585, the number of tetrahedrons=2304}\label{table3d8}
		\begin{tabular}{ c  c c} 
			\toprule
			\multicolumn{3}{c}{LL method}  \\
			\hline
		CPU time &$|e_s|_{l_2}$&$|e_s|_{h^1}$ \\
		\midrule
			174.70 &  1.9005e-08 &  2.4941e-06 \cr  
			\hline
		\end{tabular}
	\end{table}
\end{example}
\subsection{Comparison with Numerical Method in \cite{VNNP20}}
\label{sec:3.4}
In this section, we compare our LL method with the Cascadic method in \cite{VNNP20}. The researchers presented several examples in  \cite{VNNP20} over the irregular domains in Figure \ref{domain3d} by using the following test functions
\begin{align*}
	u^{3ds3}&=e^{\frac{(x^2+y^2+z^2)}{2}},\cr
	u^{3ds6}&=-\sqrt{3-(x^2+y^2+z^2)}, \cr
	u^{3ds8}&=\dfrac{x^2+y^2+z^2}{2}-\sin(x)-\sin(y)-\sin(z),\cr 
	u^{3ds9}&=\dfrac{(x^2+y^2+z^2)^{\frac{3}{4}}}{3}.
\end{align*}
We use our method (LL method) to compute numerical solutions based on the same testing  functions over the same testing domains. Our numerical results are shown in Tables \ref{GFDM1}, \ref{GFDM2} and \ref{GFDM3}.
\begin{figure}[htpb]
	\centering
	\begin{tabular}{ccc}
		\includegraphics[width=.33\linewidth]{3ddomain1.jpg}
		&
		\includegraphics[width=.33\linewidth]{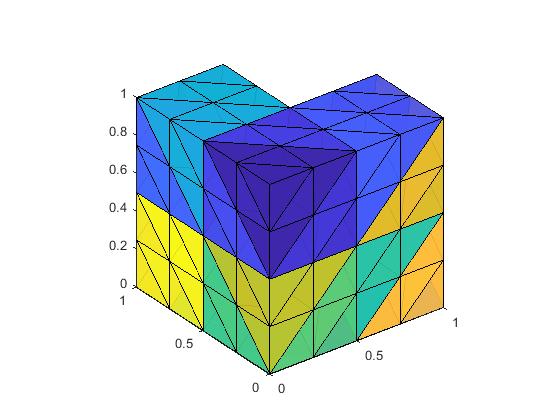}
		&
		\includegraphics[width=.33\linewidth]{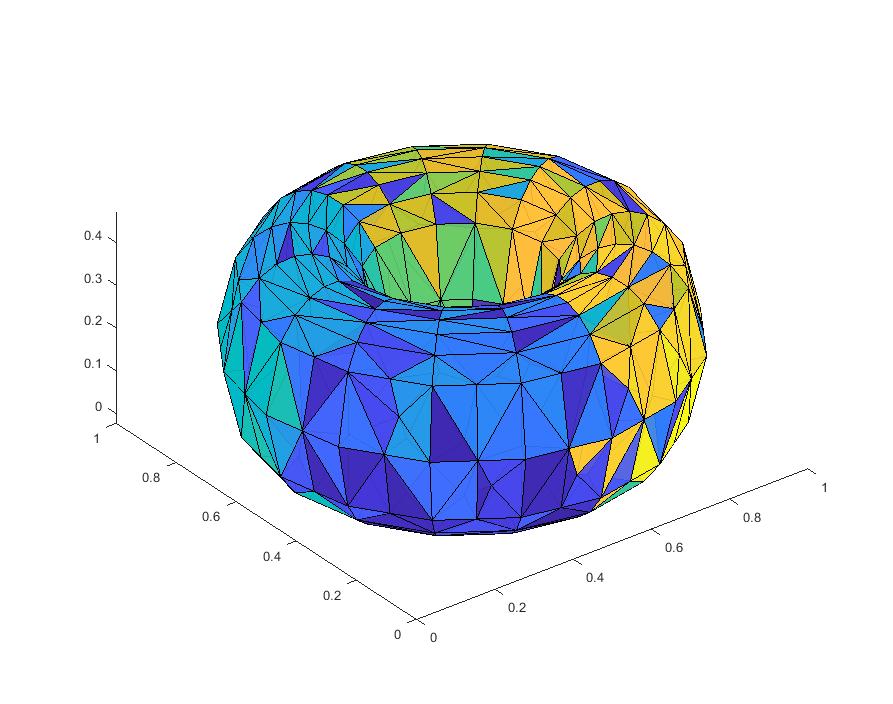}
		\cr
	\end{tabular}
	\caption{Several 3D domains (Cube, Letter L, Torus)}\label{domain3d}
\end{figure}
\begin{table}[htpb]
	\centering
	\caption{The CPU time, DOFs, errors $|e_s|_{l_2}, |e_s|_{h_1}$ using LL method with $D=6, r=1$ and $|e_s|_{l_2}$ using the Cascadic method in \cite{VNNP20} over the cube $[0,1]^3$ }\label{GFDM1}
	\begin{tabular}{ c c  c c c c} 
	\toprule
		&\multicolumn{4}{c}{LL method}&  Cascadic method  \\
	 \cmidrule(lr){2-5} \cmidrule(r){6-6}
		solution&CPU time&DOFs &$|e_s|_{l_2}$&$|e_s|_{h^1}$&$|e_s|_{l_2}$ \\
	\midrule
		$u^{3ds3}$& 18.712 &  32256 &  4.4010e-08 &  1.6144e-06  & 9.8659e-04\cr
		$u^{3ds6}$&6.9873  & 32256 &  4.0285e-04 &  2.6783e-02 &  1.7831e-04\cr
		$u^{3ds8}$&14.852 &  32256 &  1.8152e-11 &  6.8884e-10 & 3.5044e-04\cr
		$u^{3ds9}$&12.826 &  32256  & 1.3242e-04 &  1.1264e-03 &  2.2255e-04\cr
	\bottomrule
	\end{tabular}
\end{table}

\begin{table}[htpb]
	\centering
		\caption{The CPU time, DOFs, errors $|e_s|_{l_2}, |e_s|_{h_1}$ using LL method with $D=6, r=1$ and $|e_s|_{l_2}$ using the Cascadic method in \cite{VNNP20} over L-shaped domain}\label{GFDM2}
	\begin{tabular}{ c c  c c c c} 
		\toprule
		&\multicolumn{4}{c}{LL method}&  Cascadic method  \\
	 \cmidrule(lr){2-5} \cmidrule(r){6-6}
		solution&CPU time&DOFs &$|e_s|_{l_2}$&$|e_s|_{h^1}$&$|e_s|_{l_2}$ \\
		\midrule
		$u^{3ds3}$&  12.826 &  24192 &  1.5727e-08 &  7.0173e-07  &4.8655e-03\cr
		$u^{3ds6}$&4.5050 & 24192  & 6.5992e-05  & 7.6198e-04  &  1.6238e-04\cr
		$u^{3ds8}$&12.935  & 24192  & 1.4826e-11 &  6.9814e-10  &  1.2240e-04\cr
		$u^{3ds9}$&4.8951 & 24192  & 2.3076e-04 &  2.0929e-03 &  4.2183e-04\cr
		\bottomrule
	\end{tabular}
\end{table}

\begin{table}[htpb]
	\centering
		\caption{The CPU time, DOFs, errors $|e_s|_{l_2}, |e_s|_{h_1}$ using LL method with $D=4, r=1$ and $|e_s|_{l_2}$ using the Cascadic method in \cite{VNNP20} over Torus }\label{GFDM3}
	\begin{tabular}{ c c  c c c c} 
		\toprule
		&\multicolumn{4}{c}{LL method}& \multicolumn{1}{c} {Cascadic method}  \\
	 \cmidrule(lr){2-5} \cmidrule(r){6-6}
		solution&CPU time&DOFs &$|e_s|_{l_2}$&$|e_s|_{h^1}$&$|e_s|_{l_2}$ \\
	\midrule
		$u^{3ds3}$&141.01 & 80990 &  6.0125e-07 &  1.3250e-05 &  3.1914e-04\cr
		$u^{3ds6}$&90.420  & 80990 &  5.6340e-04 &  1.1438e-02 &  1.9850e-04\cr
		$u^{3ds8}$&119.42  & 80990 &  4.9540e-07 &  9.5117e-06 & 2.1182e-04\cr
		$u^{3ds9}$&125.64  & 80990 &  2.3272e-07 &  1.3522e-05 &  1.7504e-04\cr
	\bottomrule
	\end{tabular}
\end{table}

\subsection{Free Movement of Transportation}
\label{sec:3.5}
Finally, we consider the free movement problem. 
In this case, the Monge-Amp\'ere equation (\ref{MAE}) becomes
\begin{align}\label{MAE2}
	\det(D^2u(\bfx))&=1, ~~\bfx~\text{in} ~ \Omega \subset \mathbb{R}^3\\
	\nabla u(\bfx)&=\partial W, ~~\bfx ~\text{on} ~\partial \Omega.
\end{align}
To completely determine $u$, we need to specify an oblique boundary condition. When both $\Omega$ and $W$ are star-shaped
domains, e.g., convex domains, we can match the centers of $\Omega$ and $W$ by shifts and use a ray $R$ from the center 
to intercept the boundary of $\Omega$ and the boundary of $W$. This can build up a map from $\partial \Omega$ to the 
boundary of $W$. Using the outward normal of $\partial \Omega$ and the outward normal of $W$, we solve the Neumann 
boundary condition of the Monge-Amp\'ere equation (\ref{MAE}) becomes
\begin{align}\label{MAE3}
	\det(D^2u(\bfx))&=1,~~\bfx~\text{in} ~ \Omega \subset \mathbb{R}^3\\
	\bfn_{\partial \Omega} \nabla u(\bfx)&= \bfn_W\partial W, \bfx ~~\text{on} ~\partial \Omega.
\end{align}

Now we can apply our computational approach to find a solution of $u$ and form a transportation map from $\Omega$ to 
$W$. So that we can show which points in $\Omega$ is mapped to which points in $W$ by using an image or stack of images.

\begin{example}
	For simplicity, we first consider the 2D setting over a rectangular domain $\Omega$ and $W$ is a circular domain as
	shown in Figure~\ref{fig2dex1}. On the left-hand side, we have an image density function and on the right-hand side,
	the density is transported to the circular domain.  The center of the rectangular domain $\Omega
	=[-1, 1]^2$ is the origin $(0,0)$ and
	the same for the circular domain $W=\{(x,y), x^2+y^2\le 1\}$.  
	In Figure~\ref{fig2dex2} and \ref{fig2dex3}, we use the point $(-0.4, 0)$ and $(0,0.4)$ as the center of the circular 
	domain, respectively. The image on the right-hand side of  Figure~\ref{fig2dex2} is clearly deformed and the similar
	for the image (right) of Figure~\ref{fig2dex3}.  The cost for transportation in these two cases  
	\begin{equation}
		\label{cost}
		\frac{1}{2}\int_\Omega \|\bfx - \nabla u(\bfx)\|^2 f(\bfx) d\bfx 
	\end{equation}
	is larger than the cost for the transportation in Figure~\ref{fig2dex1}.  
	
	\begin{figure}[htpb]
		\centering
		\includegraphics[width=1\linewidth, height=4.5cm]{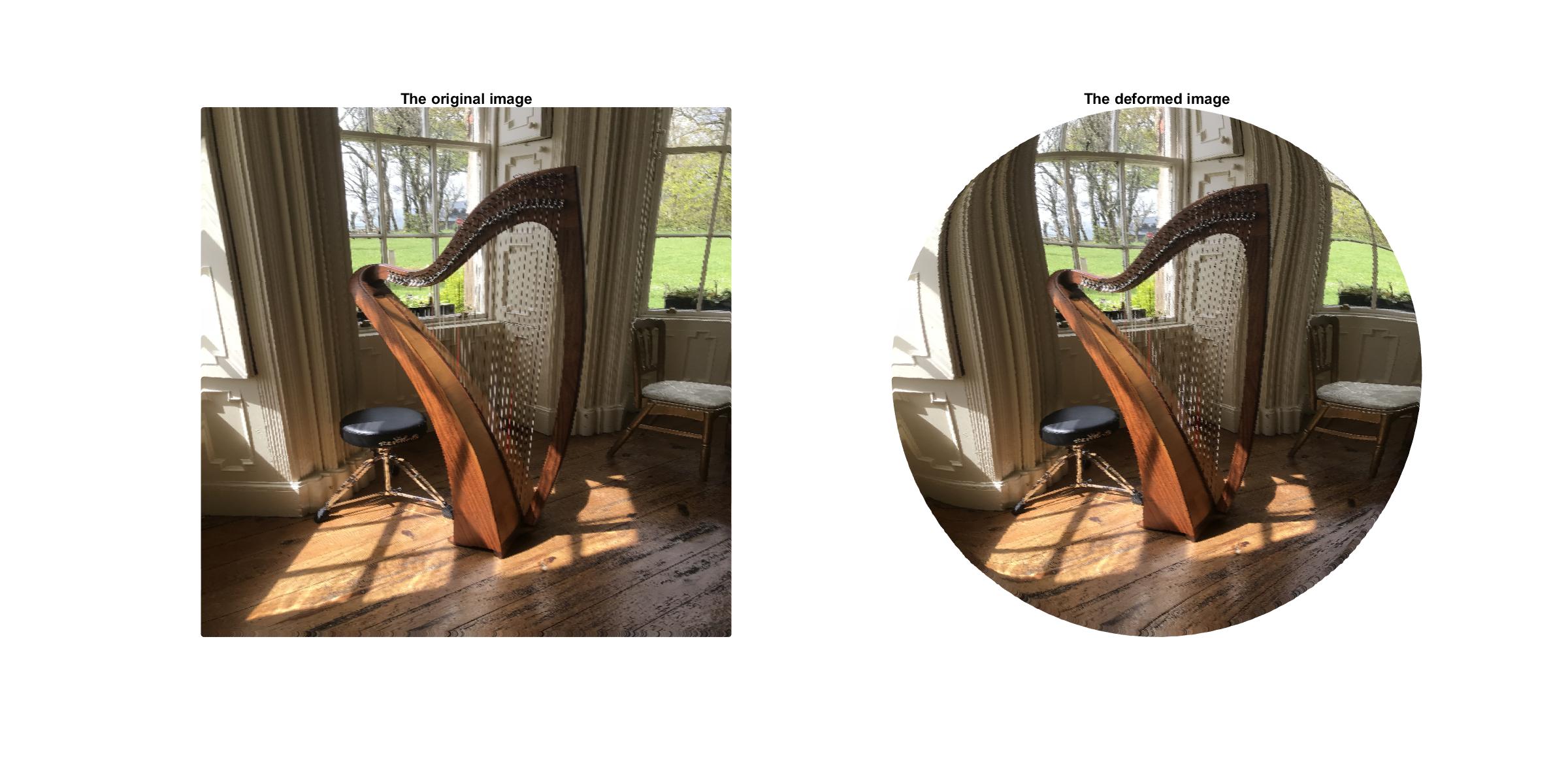}
		\caption{A density over a rectangular domain $\Omega=[-1, 1]^2$(on the left) and a transported image over
			the circular domain (on the right) \label{fig2dex1}}   
	\end{figure}
	
	\begin{figure}[htpb]
		\centering
		\includegraphics[width=1\linewidth, height=4.5cm]{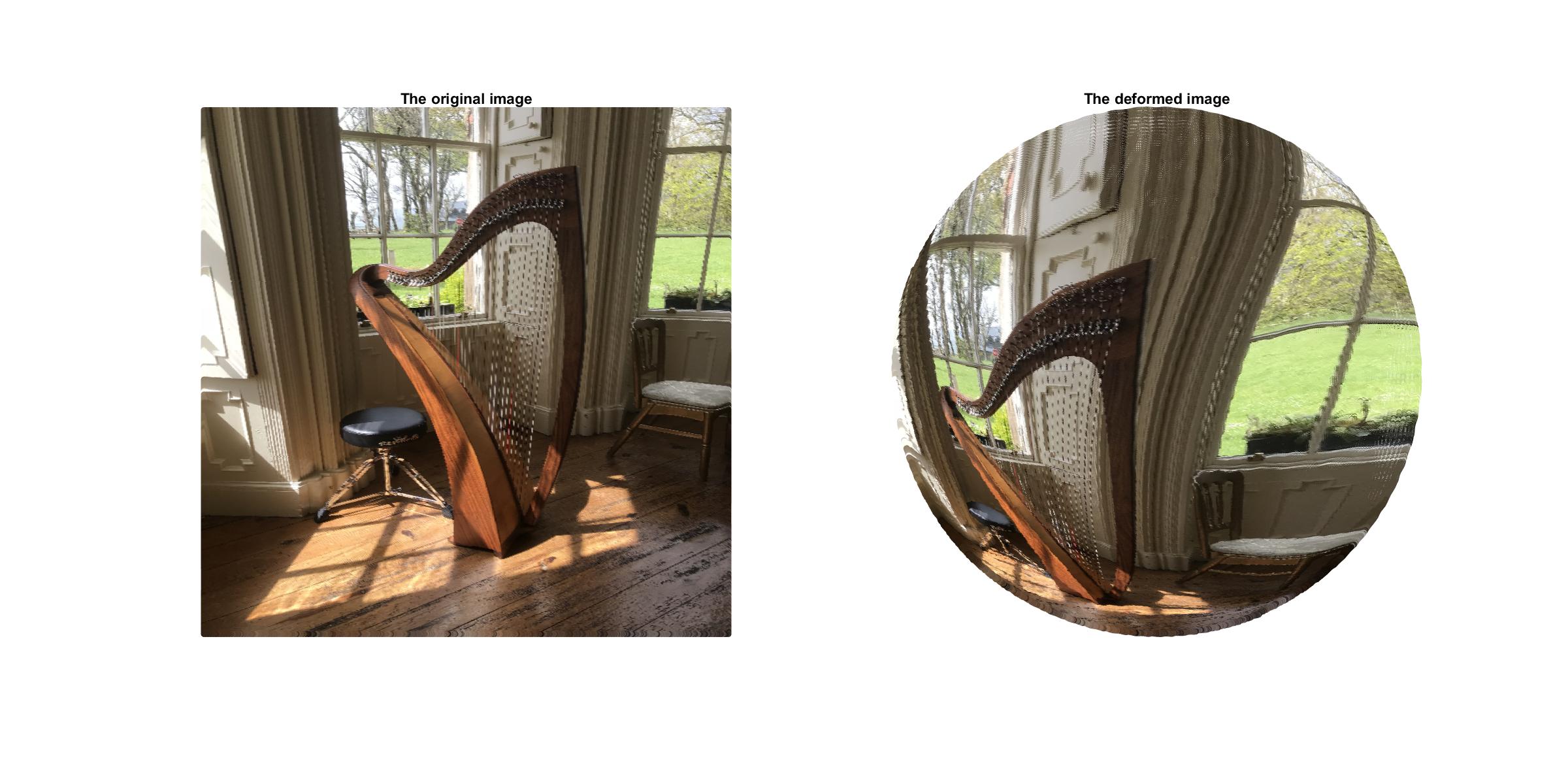}
		\caption{A density over a rectangular domain $\Omega=[-1, 1]^2$(on the left) and a transported image over
			the circular domain (on the right). Note the point $(-0.4,0)$ in the circular domain was chosen as the center. 
			\label{fig2dex2}}
	\end{figure}
	\begin{figure}[htpb]
		\centering
		\includegraphics[width=1\linewidth, height=4.5cm]{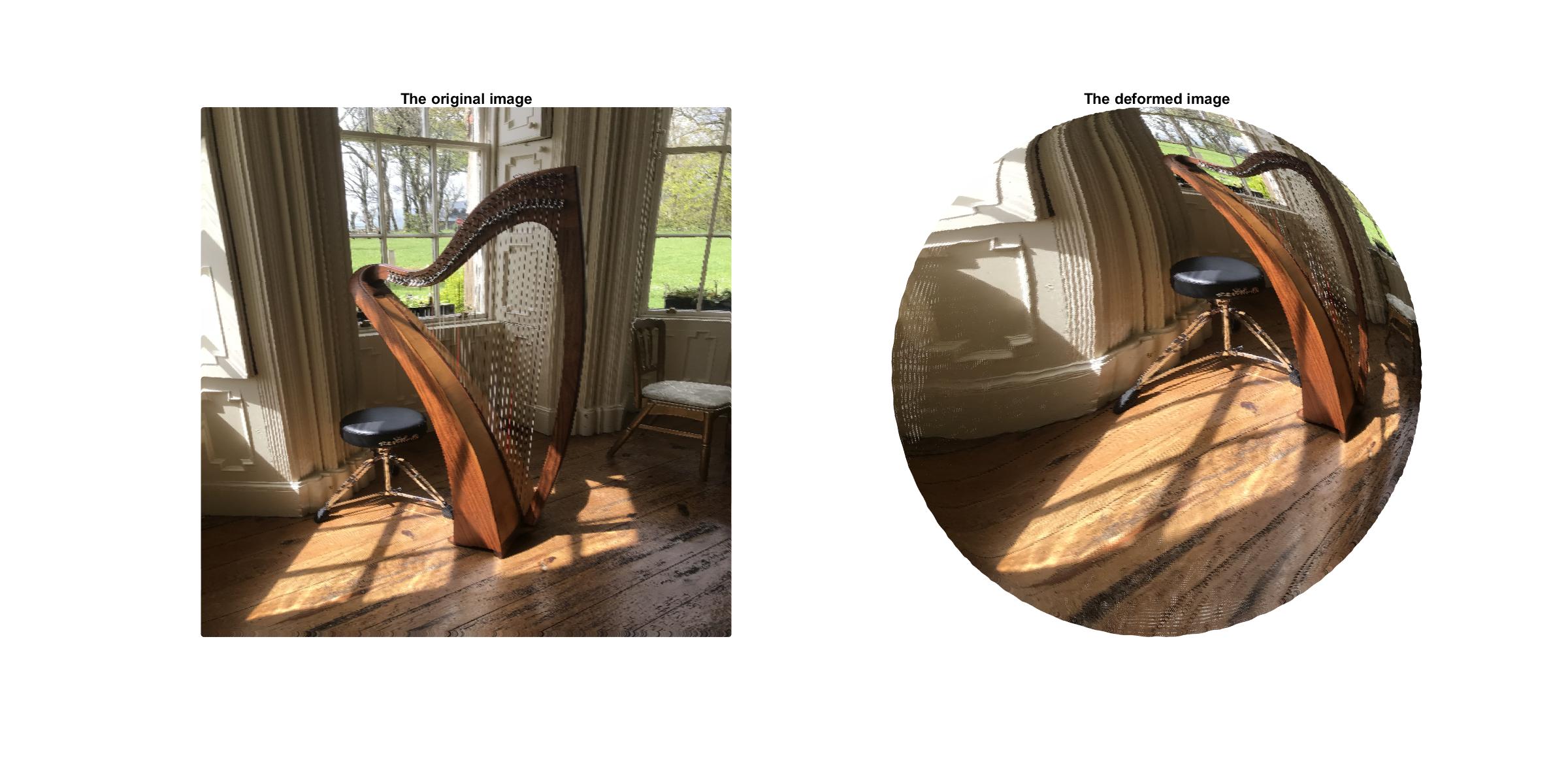}
		\caption{A density over a rectangular domain $\Omega=[-1, 1]^2$(on the left) and a transported image over
			the circular domain (on the right). Note the point $(0,0.4)$ in the circular domain was chosen as the center. 
			\label{fig2dex3}}
	\end{figure}
\end{example}

\begin{example}
	We now show the transportation from the points in the cube $\Omega=[-1, 1]^3$ to the unit ball 
	$W=\{(x,y,z): x^2+y^2+z^2\le 1\}$.
	Again we use the density $f(\bfx)$ which is a stack of the same image over $\Omega$ to show how a point in 
	$\Omega$ is transported to the point in $W$.  
	\begin{figure}[htpb]
		\centering
		\includegraphics[width=1\linewidth, height=6cm]{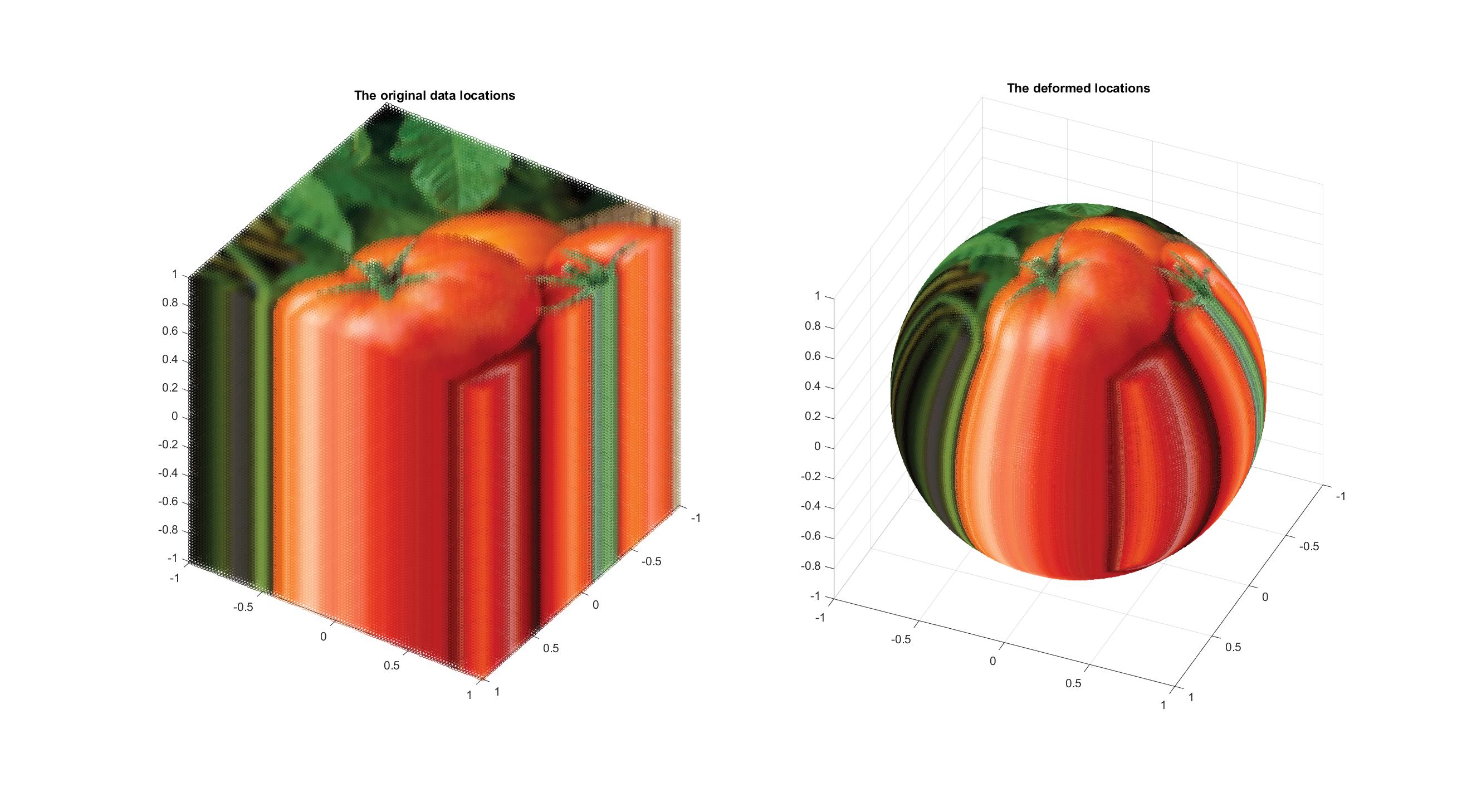}
		\caption{A density over a rectangular domain $\Omega=[-1, 1]^3$(on the left) and a transported image over
			the spherical domain $W$ (on the right) \label{fig3dex1}}   
	\end{figure}
	
	We can see that our computation is reliable as the points in $\Omega$ are completed transported into $W$. In fact
	the map $\nabla u$ is a bijection due to the convexity of the Brenier potential $u$ as shown in Figure~\ref{fig3dex2}.
	\begin{figure}[htpb]
		\centering
		\includegraphics[height=4.5cm]{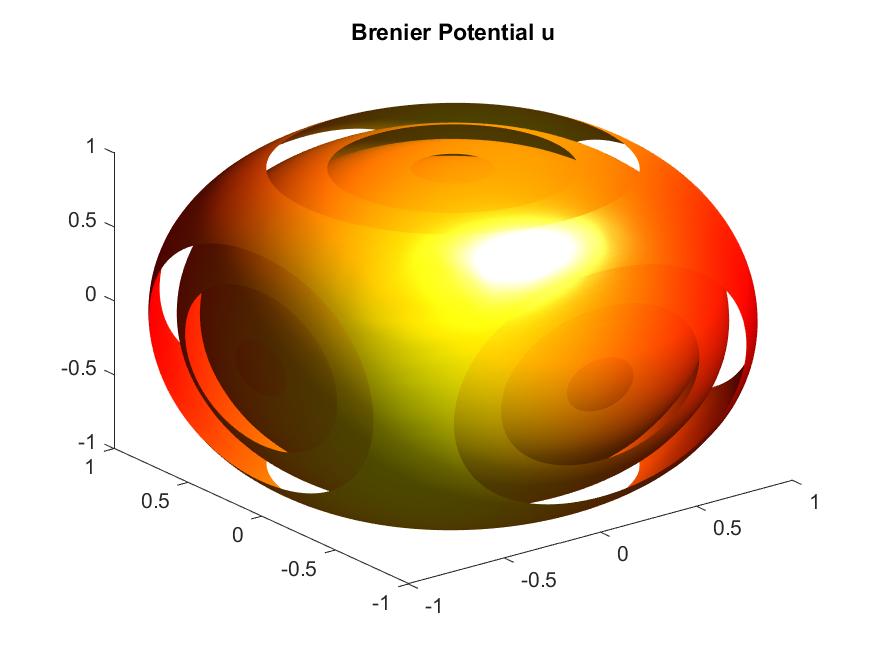}
		\caption{An iso-surface plot of the Brenier potential over a rectangular domain $\Omega=[-1, 1]^3$ 
			\label{fig3dex2}}   
	\end{figure}
\end{example}

\begin{acknowledgements}
The authors would like to thanks anonymous referees for their valuable comments. In addition, the authors 
would like to thank Dr. Gerard Awanou for generosity for providing references \cite{CLW21} and \cite{W96}.   This
work is supported by the Simons Foundation collaboration grant \#864439.
\end{acknowledgements}

%
%



\end{document}